\documentclass[11 pt]{amsart}
%\subjclass[2010]{Primary 14C20}

\usepackage[margin=2.5cm]{geometry}
\usepackage{amsfonts,graphicx,enumerate}
\usepackage{amssymb}
\usepackage{amsmath}
\usepackage{latexsym}
\usepackage{mathrsfs}
\usepackage{amsthm}
\usepackage{verbatim}
\usepackage{amscd}
\usepackage{hyperref}

\usepackage{comment}
\usepackage{framed}
\usepackage{color}
\definecolor{shadecolor}{gray}{0.875}

\newtheorem{thrm}{Theorem}[section]
\newtheorem{lem}[thrm]{Lemma}
\newtheorem{cor}[thrm]{Corollary}
\newtheorem{prop}[thrm]{Proposition}
\newtheorem{conj}[thrm]{Conjecture}

\theoremstyle{definition}
\newtheorem{defn}[thrm]{Definition}

\newtheorem{exmple}[thrm]{Example}
\newtheorem{rmk}[thrm]{Remark}
\newtheorem{ques}[thrm]{Question}%[section]
\newtheorem{notn}[thrm]{Notation}

\newtheorem*{caut}{Caution}

\DeclareMathOperator{\Eff}{\overline{Eff}}

\DeclareMathOperator{\Upsef}{Upsef}

\DeclareMathOperator{\Nef}{Nef}

\DeclareMathOperator{\Mov}{\overline{Mov}}

\DeclareMathOperator{\Supp}{Supp}

\DeclareMathOperator{\pl}{PL}

\DeclareMathOperator{\bpf}{BPF}

\DeclareMathOperator{\upsef}{Upsef}

\input xy
\xyoption{all}

\newcommand{\factor}[2]{\left. \raise 1pt\hbox{\ensuremath{#1}} \right/
        \hskip -2pt\raise -3pt\hbox{\ensuremath{#2}}}

\begin{document}

\title{Positive cones of dual cycle classes}

\author{Mihai Fulger}
\address{Department of Mathematics, Princeton University\\
Princeton, NJ \, \, 08544}
\address{Institute of Mathematics of the Romanian Academy, P. O. Box 1-764, RO-014700,
Bucharest, Romania}
\email{afulger@princeton.edu}

\author{Brian Lehmann}
\thanks{The second author is supported by NSF Award 1004363.}
\address{Department of Mathematics, Boston College  \\
Chestnut Hill, MA \, \, 02467}
\address{Department of Mathematics, Rice University \\
Houston, TX \, \, 77005}

\email{lehmannb@bc.edu}

\begin{abstract}
We study generalizations for higher codimension cycles of several well-known definitions of the nef cone of divisors on a projective variety.  These generalizations fix some of the pathologies exhibited by the classical nef cone of higher codimension classes. As an application, we recover the expected properties of the cones $\Eff_k(X)$ for all $k$. %The properties of another generalization, the analogue of basepoint freeness, were essential in other work by the authors in \cite{fl13z} and \cite{fl14k}.
\end{abstract}

\maketitle

\section{Introduction}

A fundamental invariant of projective algebraic geometry is the cone of nef divisors $\Nef(X)$. By \cite{kleiman66}, it admits several equivalent characterizations: it is the dual of the Mori cone of curves $\overline{\rm NE}(X)$, the closure of the cone of ample line bundle classes, and the closure of the cone generated by classes of divisors in basepoint free linear series.%, or of elements of basepoint free linear series. 

In higher codimension the picture is more subtle.  Let $X$ be a projective variety over an algebraically closed field and let $N_{k}(X)$ denote the numerical group of dimension $k$-cycles with $\mathbb{R}$-coefficients.  The \emph{pseudoeffective} cone $\Eff_k(X)$ is the closure in $N_k(X)$ of the cone generated by classes of $k$-dimensional subvarieties of $X$. Then $\Nef^k(X)$ is defined for smooth $X$ as the dual of $\Eff_k(X)$ with respect to the intersection pairing. When $X$ is singular, we work instead in the space of \emph{dual cycle classes}, the abstract dual $N^k(X)$ of $N_k(X)$.

Interestingly, nef classes do not generally share the other positivity properties exhibited by nef divisors. Indeed \cite{delv11} constructs examples on abelian varieties of nef classes of codimension two that are not even pseudoeffective.  Guided by the alternative characterizations for the nefness of divisors, in this paper we construct ``positive cones'' inside the spaces $N^k(X)$. These are geometric generalizations of nefness in higher codimension which are better suited for applications.  These cones are contained in $\Nef^k(X)$ and satisfy the following properties (which we will see are also satisfied by the nef cone):
\begin{enumerate}
\item they are \emph{full-dimensional} (i.e. span $N^k(X)$) and \emph{salient} (i.e. do not contain lines);
\item they contain the complete intersections of ample divisors in their strict interior;
\item they are preserved by pullbacks.
\end{enumerate}
Furthermore, these cones have the following advantages over the nef cone:
\begin{enumerate} \setcounter{enumi}{3}
\item they are contained in $\Eff^{k}(X)$;
\item they are preserved by the intersection product $N^{k}(X) \times N^{r}(X) \overset{\cap}{\longrightarrow} N^{k+r}(X)$.
\end{enumerate}
%We say that a collection of convex cones in $N^k(X)$ is \emph{good} if it satisfies each of these properties.

A \emph{positive cone of dual classes} is a subcone of $N^k(X)$ satisfying properties (1)-(5) above.

\subsection{The pliant cone} A globally generated divisor class is the pullback of an effective divisor from a projective space.  The analogous notion in higher codimension is to pullback effective cycle classes from Grassmann varieties instead.  We define the \emph{pliant cone} $\pl^k(X)$ to be the closure of the cone generated by products of such classes with total codimension $k$.

\begin{exmple}(cf. \ref{ex:grassmann}) If $X$ is a Grassmann variety of dimension $n$, then $\pl^{n-k}(X)=\Eff_k(X)$.
\end{exmple}

\begin{exmple} \cite{delv11} analyzes two types of abelian varieties in detail: a product $E^{\times n}$ where $E$ is a complex elliptic curve with CM, and $A \times A$ for a very general complex abelian surface $A$.  In both cases the pliant cone coincides with the effective cone (in every codimension).  It would be interesting to describe the pliant cone for other abelian varieties.
\end{exmple}

\begin{thrm}(cf. \ref{lem:pl generates}, \ref{rmk:plpull}, \ref{lem:pliantupsef}, \ref{lem:ci intpliant}) Let $X$ be a projective variety over an algebraically closed field, and let $k\geq 0$.  Then $\pl^{k}(X)$ is a positive cone of dual classes.
%Then $\pl^k(X)$ is a good subcone of $\Nef^k(X)\subset N^k(X)$.
\end{thrm}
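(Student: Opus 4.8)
The plan is to establish properties (1)--(5) in turn, organised around one simplifying observation: every generator of $\pl^{k}(X)$ is a single pullback from a product of Grassmannians. If $\alpha=f_1^*\beta_1\cdots f_m^*\beta_m$ with $f_i\colon X\to G_i$ morphisms to Grassmannians and $\beta_i\in\Eff^*(G_i)$, then for $f=(f_1,\dots,f_m)\colon X\to G:=G_1\times\cdots\times G_m$ and $\beta=\beta_1\boxtimes\cdots\boxtimes\beta_m$ we have $\alpha=f^*\beta$, where $G$ is again smooth, projective and homogeneous and $\beta$ is effective. Granting this, properties (3) and (5) are formal. For a morphism $g\colon Y\to X$ one has $g^*f^*\beta=(f\circ g)^*\beta$, so pullbacks of generators are generators; passing to nonnegative combinations and then to closures (using continuity of $g^*$) gives $g^*\pl^{k}(X)\subseteq\pl^{k}(Y)$. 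Likewise $f^*\beta\cap f'^*\beta'=(f,f')^*(\beta\boxtimes\beta')$ realises the product of two generators as a generator over $G\times G'$, and continuity of the intersection pairing then yields $\pl^{k}(X)\cap\pl^{r}(X)\subseteq\pl^{k+r}(X)$.

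For property (4) I would factor through the universally pseudoeffective cone. The geometric input is Kleiman transversality on the homogeneous space $G$: for any $f\colon X\to G$ and any subvariety $Z\subseteq G$, a general translate $gZ$ is generically transverse to $f$, so $f^*[Z]=[f^{-1}(gZ)]$ is represented by an effective cycle, and since $[gZ]=[Z]$ numerically, $f^*\beta\in\Eff^*(X)$ for every effective $\beta$. Because the same holds after precomposing with an arbitrary morphism, in fact $\pl^{k}(X)\subseteq\upsef^{k}(X)\subseteq\Eff^{k}(X)$. Pairing a universally pseudoeffective class of codimension $k$ with a $k$-dimensional subvariety produces a pseudoeffective zero-cycle, of nonnegative degree, so $\upsef^{k}(X)\subseteq\Nef^{k}(X)$ as well. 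Since $\Nef^{k}(X)=\Eff_k(X)^\vee$ is salient (being dual to $\Eff_k(X)$, which spans $N_k(X)$ by definition), its subcone $\pl^{k}(X)$ is salient, giving the salience half of (1).

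For the full-dimensionality half of (1) I would produce enough pliant classes using Chern classes. A globally generated vector bundle $E$ is classified by a morphism $f\colon X\to G$ to a Grassmannian with each Chern class $c_i(E)=f^*\sigma_i$ a pullback of a Schubert class, so every monomial in the $c_i(E)$ lies in $\pl^*(X)$. The point is that such monomials span $N^{k}(X)$: twisting by a sufficiently ample line bundle makes any bundle globally generated, so in $K$-theory every bundle is a difference of globally generated ones, and via the Chern character every class of $N^{k}(X)$ is a $\mathbb{Q}$-polynomial in Chern classes of globally generated bundles; hence $\pl^{k}(X)-\pl^{k}(X)=N^{k}(X)$. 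I expect this spanning statement to demand the most care, particularly in controlling the passage between $K$-theory and the operational/numerical group $N^{k}(X)$ when $X$ is singular.

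Finally, for property (2) I would argue through the dual cone. Since $\pl^{k}(X)$ is full-dimensional, its dual $\pl^{k}(X)^\vee\subseteq N_k(X)$ is salient, so a complete intersection $A_1\cdots A_k$ of ample divisors lies in the strict interior of $\pl^{k}(X)$ precisely when $A_1\cdots A_k\cdot\gamma>0$ for every $\gamma\in\pl^{k}(X)^\vee\setminus\{0\}$. As $A_1\cdots A_k$ is a product of ample, hence pliant, divisor classes, this pairing is $\ge 0$; suppose it vanishes. The symmetric multilinear form $(B_1,\dots,B_k)\mapsto\gamma\cdot(B_1\cdots B_k)$ is nonnegative whenever the $B_i$ are ample and is zero at the interior point $(A_1,\dots,A_k)$, and a slot-by-slot linearity argument forces it to vanish identically; thus $\gamma$ kills every monomial in divisor classes. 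Now let $f=(f_1,\dots,f_m)\colon X\to G=\prod_iG_i$ be arbitrary. Since $f^*\beta$ is pliant for every effective $\beta$ on $G$, the relation $\gamma\in\pl^{k}(X)^\vee$ gives $\beta\cdot f_*\gamma\ge0$ for all such $\beta$, so $f_*\gamma\in\Eff_k(G)$ by the self-duality of the Schubert cone. But for an ample class $\mathcal A$ on $G$ the pullback $f^*\mathcal A$ is a divisor class on $X$, whence $\mathcal A^k\cdot f_*\gamma=(f^*\mathcal A)^k\cdot\gamma=0$; as $\mathcal A$ is ample and $f_*\gamma$ is effective this forces $f_*\gamma=0$. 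Therefore $f^*\beta\cdot\gamma=\beta\cdot f_*\gamma=0$ for every generator $f^*\beta$, and full-dimensionality gives $\gamma=0$, a contradiction. This dual-cone computation, resting on full-dimensionality and on the self-duality of Schubert cones, is the genuinely delicate part of the argument.
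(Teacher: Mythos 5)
Your proposal is correct in substance, and on the one point the paper singles out as the main difficulty---property (2), that complete intersections of ample classes lie in the strict interior of $\pl^k(X)$ (Lemma \ref{lem:ci intpliant})---you take a genuinely different route. The paper is constructive: it reduces to a single power $h^k$, picks ample globally generated bundles whose Chern monomials span $N^k(X)$, replaces them by their direct sum $E$ (the Whitney formula, having positive coefficients, keeps the modified class $[R(E)]$ interior), and so produces an interior point of $\pl^k(X)$ pulled back from a single Grassmannian along the \emph{finite} Gauss map of $E$; interiority of $h^k$ then comes from bigness of complete intersections on $\mathbb{G}$ (Lemma \ref{lem:ci big}) together with $\pl^k(\mathbb{G})=\Eff^k(\mathbb{G})$. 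Your proof is soft convex duality instead: a functional $\gamma\in\pl^{k}(X)^{\vee}$ vanishing on $A_1\cdots A_k$ must annihilate all divisor monomials (your slot-by-slot argument is valid: nonnegativity on the open ample cone plus a zero at an interior point kills each slot in turn, and a multilinear form vanishing on a nonempty open set vanishes identically), then pushforwards $f_{*}\gamma$ to products of Grassmannians are pseudoeffective of degree zero, hence zero, hence $\gamma$ kills every pliant generator and is zero by full-dimensionality. This treats arbitrary $A_1,\ldots,A_k$ at once and avoids both the direct-sum trick and the finiteness of the Gauss map, at the price of invoking full-dimensionality and the cone structure on Grassmannian products ($\Eff^k=\Nef^k=\pl^k$, rational polyhedral with Schubert generators)---ingredients the paper also uses, in the same logical order, so there is no circularity. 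The remaining properties you handle essentially as the paper does: (3) and (5) formally, (1) by twisting with an ample line bundle (Lemma \ref{lem:pl generates}), and (4) by reproving the positivity statement that the paper simply cites from \cite[Example 12.1.7.(a)]{fulton84}.

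Three small repairs are needed, none fatal. First, in property (4) the appeal to generic transversality and the equality $f^{*}[Z]=[f^{-1}(gZ)]$ is a characteristic-zero statement; over an arbitrary algebraically closed field Kleiman's theorem gives only the dimension count, and you should conclude instead via Fulton's refined intersection product, whose distinguished components carry positive multiplicities, that $f^{*}[Z]$ is represented by \emph{some} effective cycle supported on $f^{-1}(gZ)$---which is all you need. Second, for full-dimensionality the detour through $K$-theory and the Chern character is unnecessary and delicate when $X$ is singular (the Chern character isomorphism is a smoothness statement); nothing beyond the definition is required, since $N^k(X)$ is by construction spanned by weight-$k$ Chern polynomials, and the twist-by-$mH$ induction you sketch already rewrites each $c_i(E)$ in terms of globally generated bundles. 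Third, in the step ``$\mathcal{A}$ ample and $f_{*}\gamma$ effective forces $f_{*}\gamma=0$'': the class $f_{*}\gamma$ is a priori only pseudoeffective, and on a general variety this implication is exactly Corollary \ref{cor:deg0}, which depends on the theorem being proved; it is legitimate here only because on a product of Grassmannians $\Eff_k(G)$ is rational polyhedral, generated by Schubert classes of positive degree, and you should say so explicitly to dispel the appearance of circularity.
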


The main difficulty is proving that complete intersections belong to the strict interior of $\pl^k(X)$. % which appears to be a new result even for $\Nef^k(X)$, at least in this generality. We mention that the consideration of Grassmann varieties instead of only projective spaces in the definition of pliancy guarantees the full-dimensionality of $\pl^k(X)$.
As an application, we verify that $\Eff_k(X)$ has the expected properties suggested by the case of the Mori cone, i.e. $k=1$. 

\begin{thrm}(cf. \ref{lem:ci big}, \ref{cor:effsalient}, \ref{cor:norms}, \ref{cor:surjeff}) Let $X$ be a projective variety over an algebraically closed field, and let $k\geq 0$. Then
\begin{enumerate}
\item $\Eff_k(X)$ is a full-dimensional and salient subcone of $N_k(X)$.
\item Complete intersections of dimension $k$ of ample classes are contained in the strict interior of $\Eff_k(X)$. %and in the strict interior of $\Nef^{k}(X)$.
\item For any ample divisor class $h$, the function $\deg_h:\Eff_k(X)\to\mathbb R_{\geq 0}$ defined by $\alpha\mapsto\alpha\cdot h^k$ is the restriction to $\Eff_k(X)$ of a norm on $N_k(X)$.
\item If $\pi:X\to Y$ is a surjective morphism of projective varieties then $\pi_*\Eff_k(X) = \Eff_k(Y)$. %If $\pi$ is onto, then equality holds.
\end{enumerate}
\end{thrm}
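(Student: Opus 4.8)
The plan is to exploit the perfect pairing $N^k(X)\times N_k(X)\to\mathbb R$ together with the fact, assumed from the previous theorem, that $\pl^k(X)\subseteq N^k(X)$ is a positive cone of dual classes. The single most useful consequence is fact $(\star)$: the complete intersection class $h^k\in N^k(X)$ lies in the interior of $\Nef^k(X)=\Eff_k(X)^\vee$. Indeed, by property (2) of the pliant cone $h^k$ lies in the strict interior of $\pl^k(X)$, so it has a neighborhood contained in $\pl^k(X)\subseteq\Nef^k(X)$; hence $h^k\in\mathrm{int}\,\Nef^k(X)$, which is to say $\deg_h(\alpha)=h^k\cdot\alpha>0$ for every nonzero $\alpha\in\Eff_k(X)$.

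Given $(\star)$, claims (1) and (3) are formal. For (1): the classes of $k$-dimensional subvarieties span $N_k(X)$ by definition, so $\Eff_k(X)-\Eff_k(X)=N_k(X)$ and the cone is full-dimensional; it is salient because it contains no line, as $\deg_h$ is strictly positive on it away from the origin. For (3): by salience and $(\star)$ the slice $B=\{\alpha\in\Eff_k(X):\deg_h(\alpha)=1\}$ is a compact base of the cone, so $\mathrm{conv}(B\cup -B)$ is a symmetric convex body whose Minkowski gauge is a norm on $N_k(X)$; by construction this norm restricts to $\deg_h$ on $\Eff_k(X)$.

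The crux is claim (2), the higher-codimension analogue of the fact that complete intersections of ample divisors are big. I would prove it by a residuation argument showing that $\beta=h_1\cdots h_{n-k}$ \emph{dominates} every effective class. Fix an irreducible subvariety $V\subseteq X$ of dimension $k$ (the cases $k=0,n$ being trivial). For $m_i\gg 0$ each $m_ih_i$ is very ample, and cutting successively by general members of $|m_ih_i|$ that contain $V$ produces a complete intersection $W$ of dimension exactly $k$ with $V$ among its components. Then $[W]=M\beta$ with $M=\prod_i m_i$, while $[W]-[V]$ is effective, so $M\beta-[V]\in\Eff_k(X)$, i.e. $\beta-\tfrac1M[V]\in\Eff_k(X)$. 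Since the classes $[V]$ span $N_k(X)$, choosing finitely many spanning subvarieties and using convexity shows that a whole neighborhood of $\beta$ lies in $\Eff_k(X)$, so $\beta$ is interior. (Equivalently, this dominance gives $\gamma\cdot\beta>0$ for every nonzero nef $\gamma$, which by biduality is exactly interiority.) This step carries the real geometric content: interiority is a statement about pairing with \emph{all} of $\Nef^k(X)$, which the pliant machinery alone does not control, and the residual construction is what bypasses this.

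Finally, for (4) the inclusion $\pi_*\Eff_k(X)\subseteq\Eff_k(Y)$ is routine: $\pi_*$ is continuous and linear and sends the class of a subvariety to a nonnegative multiple of an effective class (or to $0$), so it carries the generating cone, hence its closure, into the closed cone $\Eff_k(Y)$. For the reverse inclusion I would avoid the genuine issue that the linear image of a closed cone need not be closed by constructing an honest section up to a constant: cut $X$ by $\dim X-\dim Y$ general very ample divisors to obtain $X'$ with $\pi|_{X'}\colon X'\to Y$ surjective and generically finite of some degree $e\geq 1$, and let $\phi\colon N_k(Y)\to N_k(X)$ be the associated cycle pullback (flat pullback over the locus where $\pi|_{X'}$ is finite flat, extended by standard intersection theory). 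Then $\phi$ is continuous, linear, preserves effectivity, and satisfies $\pi_*\circ\phi=e\cdot\mathrm{id}$ by the projection formula; hence every $\alpha\in\Eff_k(Y)$ equals $\pi_*\big(\tfrac1e\phi(\alpha)\big)$ with $\tfrac1e\phi(\alpha)\in\Eff_k(X)$, giving equality. I expect claim (2) to be the main obstacle, with the well-definedness and effectivity of $\phi$ in the singular setting the secondary technical point to nail down.
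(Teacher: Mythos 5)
Your treatment of (1)--(3) is essentially sound. The residuation argument you give for (2) is the same geometric idea as the paper's proof of Lemma \ref{lem:ci big}: there, the authors pick finitely many subvarieties $Z_j$ whose classes span $N_{n-k}(X)$ and choose ample divisors of class $mh_i$ whose supports contain \emph{all} the $Z_j$ at once, so that a single complete intersection dominates the big class $\sum_j [Z_j]$; your version handles one subvariety $V$ at a time and then averages, which is equivalent. Your derivation of salience from strict positivity of $\deg_h$, and your Minkowski-gauge construction of the norm in (3), differ only in packaging from the paper's (which writes $h^k=\sum_i \beta_i$ with $\beta_i$ nef spanning $N^k(X)$ and takes $\|\alpha\| = \sum_i |\beta_i \cap \alpha|$, cf. Corollary \ref{cor:norms}); both are valid.

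The genuine gap is in (4): the map $\phi\colon N_k(Y)\to N_k(X)$ you invoke does not exist as stated. The morphism $\pi|_{X'}\colon X'\to Y$ is only generically finite, not flat, and $Y$ may be singular; there is no ``flat pullback over the flat locus, extended by standard intersection theory'' yielding a \emph{linear} map on numerical groups. Indeed, even for an honestly flat morphism it is not known whether flat pullback of cycles descends to numerical equivalence unless the base is smooth (this is exactly the Caution in \S\ref{backgroundsection}, before Remark \ref{rmk:flatpullbacks}). What one can do at the cycle level---take preimages/strict transforms of subvarieties meeting the good locus---is an operation on $Z_k(Y)$, not on $N_k(Y)$, and it also ignores subvarieties of $Y$ contained in the bad locus. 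Consequently $\phi(\alpha)$ is meaningless for a pseudoeffective $\alpha$ that is only a \emph{limit} of effective classes, which is precisely the subtlety the theorem is about; calling its well-definedness a ``secondary technical point'' inverts the difficulty. What survives of your idea is that every \emph{effective} class on $Y$ has an effective lift under $\pi_*$ (Remark \ref{rmk:dominant eff}), but since the image of a closed cone under a linear map need not be closed, this alone does not give $\pi_*\Eff_k(X)\supseteq\Eff_k(Y)$. The paper closes the gap quantitatively: Proposition \ref{prop:deg cover bound} produces, by induction on the dimension of the target (via a flattening, with the exceptional and branch loci handled by the inductive hypothesis), effective lifts $\beta_i$ of effective classes $\alpha_i$ whose $\deg$-norm is bounded by $C|\alpha_i|$; then the norm from part (3) gives compactness of bounded regions of $\Eff_k(X)$, so a sequence of such lifts of $\alpha_i\to\alpha$ has a limit point $\beta\in\Eff_k(X)$ with $\pi_*\beta=\alpha$ (Corollary \ref{cor:surjeff}). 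In other words, (3) is an input to (4), and the missing idea in your proposal is this uniform boundedness of lifts, which replaces the nonexistent linear section $\phi$.
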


\noindent 
The subtlety of the theorem is the treatment of the pseudoeffective classes that are not effective, but only limits of effective classes.  While some cases of this theorem are certainly known (see for example \cite[Lemma 2.2]{djv13}), surprisingly a proof in this generality seems to have been missing from the literature. %that $\Eff_k(X)$ is salient seems to have been missing from the literature.  Over $\mathbb C$, it is implied by \cite[Lemma 2.2]{djv13}. Similar statements are proved in \cite[Proposition 1.3]{bfj09} and \cite[Lemma 2.3]{chms13} for Cartier divisors.

In Definition \ref{def:pliant} we give an equivalent definition for pliancy in terms of characteristic classes of globally generated vector bundles on $X$. From this perspective, the pliant cone appears implicitly in the work of Fulton and Lazarsfeld \cite{fl83} on the positivity of characteristic classes of nef vector bundles.

\subsection{The basepoint free cone} A basepoint free linear series of divisors gives a family of divisors on $X$ such that for a fixed subvariety $Y\subset X$, the general member of the family intersects $Y$ properly. Inspired by this we say that an effective class $\alpha\in\Eff_{n-k}(X)$ is \emph{strongly basepoint free} if there exists a projective morphism $p:U\to W$ with equidimensional fibers of dimension $(n-k)$ onto a \emph{quasiprojective} variety $W$ and a \emph{flat} map $s:U\to X$ such that $(s|_F)_*[F]=\alpha$ where  $F$ is a general fiber of $p$. For $X$ smooth, we define the \emph{basepoint free cone} $\bpf^k(X)$ in $N^k(X)$ to be the closure of the cone generated by strongly basepoint free classes.  We emphasize that basepoint freeness is naturally a ``contravariant'' property preserved by pull-back so that $N^{k}(X)$ is the right ambient space for the cone.

\begin{exmple}(cf.~\ref{ex:bpfhom}) If $X$ is a smooth projective homogeneous space under the transitive action of a connected algebraic group $G$, then $\bpf^k(X)=\Eff^k(X)$ for all $k$.\qed
\end{exmple}

\begin{exmple}(cf.~\ref{ex:bpfandmds}) Let $X$ be a smooth Mori Dream Space of dimension $n$.  Then $\bpf^{n-1}(X) = \Nef^{n-1}(X)$. (The existence of many small modifications is an important part of the proof; we do not know how to characterize the basepoint free cone of curves for arbitrary smooth $X$.)
\end{exmple}

Basepoint freeness turns out to be surprisingly versatile.  Its properties were instrumental in other work by the authors in \cite{fl13z} and \cite{fl14k}. 

\begin{thrm}(cf.~\ref{lem:bpfflatpush}, \ref{lem:maintechnicalbpf}, \ref{lem:plbpfupsef}, \ref{cor:bpffullsal}) Let $X$ be a \emph{smooth} projective variety over an algebraically closed field, and let $k\geq 0$. Then $\bpf^{k}(X)$ is a positive cone of dual classes, and in addition,
\begin{enumerate} \setcounter{enumi}{5} %[i)]
%\item $\bpf^k(X)$ is a good cone in $N^k(X)$.
\item $\pl^k(X)\subseteq\bpf^k(X)\subseteq\Nef^k(X)$.
\item If $\pi:Y\to X$ is a flat morphism of relative dimension $d$ from a smooth projective variety $Y$, and $\alpha\in\bpf^{k+d}(Y)$, then $\pi_*\alpha\in\bpf^k(X)$.
\end{enumerate}
\end{thrm}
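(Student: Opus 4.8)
The plan is to verify properties (1)--(5) for $\bpf^k(X)$ together with the two additional statements (6) and (7), working throughout with the defining families $p:U\to W$, $s:U\to X$ of strongly basepoint free classes and passing to closures only at the end. The organizing idea is that most properties either follow formally from manipulations of these families, or are inherited from the pliant cone via the inclusion $\pl^k(X)\subseteq\bpf^k(X)$.

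First I would establish the flat pushforward property (7) (Lemma \ref{lem:bpfflatpush}) and pullback-stability (3), since these are the cleanest. For (7), given a strongly basepoint free $\alpha\in\bpf^{k+d}(Y)$ presented by $p:U\to W$ and a flat $s:U\to Y$, I compose $s$ with the flat map $\pi:Y\to X$; a composition of flat morphisms is flat and the fibers of $p$ retain dimension $n-k$, so $\pi\circ s$ exhibits $\pi_*\alpha$ as strongly basepoint free, once one checks that this geometric pushforward agrees with the dual of flat pullback. For (3), given $f:X'\to X$ and $\alpha$ presented by $(p,s)$, I base-change to $U'=U\times_X X'$; flatness of $s$ guarantees that $s':U'\to X'$ is flat and that the general fiber of $U'\to W$ is equidimensional of the correct dimension, whence $f^*\alpha$ is strongly basepoint free. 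Taking closures promotes both statements to the cones.

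Next I would prove the inclusions (6), $\pl^k(X)\subseteq\bpf^k(X)\subseteq\Nef^k(X)$, and the effectivity statements (Lemma \ref{lem:plbpfupsef}). The inclusion $\pl^k\subseteq\bpf^k$ amounts to realizing each pliant generator as strongly basepoint free: a globally generated bundle gives a morphism to a Grassmannian, a Schubert cycle moves in a family whose pullback furnishes the required $(p,s)$, and products are handled by property (5) below. For $\bpf^k\subseteq\Nef^k$ I would pair a strongly basepoint free $\alpha$ with an effective cycle $Z$: since the general fiber $s(F)$ meets $Z$ properly, the intersection is an effective cycle and $\alpha\cdot[Z]\ge0$. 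Effectivity of $\bpf^k$ (hence containment in $\Eff^k$, giving (4), and universal pseudoeffectivity, i.e. containment in $\upsef^k(X)$, once combined with (3)) is immediate, as a strongly basepoint free class is by construction the class of the effective cycle $(s|_F)_*[F]$. With $\pl^k\subseteq\bpf^k$ in hand, property (2) for $\bpf^k$ and the full-dimensionality in (1) are inherited directly from the corresponding statements for the pliant cone, while salience in (1) (Corollary \ref{cor:bpffullsal}) follows because the linear functional $\alpha\mapsto\alpha\cdot h^{n-k}$ is strictly positive on $\bpf^k(X)\setminus\{0\}$ for $h$ ample.

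The genuine technical core is property (5), stability under the intersection product, packaged as Lemma \ref{lem:maintechnicalbpf}. Given $\alpha\in\bpf^k(X)$ and $\beta\in\bpf^r(X)$ presented by families $(p_1,s_1)$ and $(p_2,s_2)$, I form the fibered family $U=U_1\times_X U_2$ with its maps $s:U\to X$ and $p:U\to W_1\times W_2$, so that the fiber over a general $(w_1,w_2)$ is the scheme-theoretic intersection $s_1(F_{w_1})\cap s_2(F_{w_2})$. I expect this to be the main obstacle: one must show that for general parameters these fibers meet in the expected dimension $n-k-r$ (a Kleiman--Bertini/moving argument, which over an arbitrary algebraically closed field needs care), that $s$ remains flat and $p$ equidimensional (via generic flatness together with a dimension count), and that the resulting class equals $\alpha\cap\beta$. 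Once this proper-intersection statement is secured, stability of the cone under intersection follows by continuity, and it simultaneously completes the argument that products of pliant generators are strongly basepoint free, closing the proof of (6).
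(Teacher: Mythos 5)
Your constructions coincide with the paper's own (composition with $\pi$ for flat pushforward, base change $U\times_X X'$ for pullback, the fiber product $U_1\times_X U_2$ for intersections, Schubert families moved by $\mathbb{P}GL(V)$ for $\pl^k\subseteq\bpf^k$), but the technical core that makes these constructions work is missing, and the tools you propose for it would not supply it. For the intersection product you call for a Kleiman--Bertini/moving argument and generic flatness. Neither is the right mechanism: there is no group action on a general $X$, so Kleiman transversality is simply unavailable, and generic flatness cannot repair a non-flat $s$ (shrinking $U$ over $X$ would destroy properness of $p$ and the surjectivity of components onto $W$). The actual point --- and the reason the definition demands $s$ flat --- is that no repair is needed: $U_1\times_X U_2\to U_2$ is flat as a base change of $s_1$, so its composition with $s_2$ is flat, and the expected-dimension statement for general fibers follows from flatness of the $s_i$ exactly as in Remark \ref{bpfgenpos} (upper bound) together with smoothness of $X$, which makes the relevant graph/diagonal a regular embedding (lower bound, guaranteeing \emph{equi}dimensionality). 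You attribute equidimensionality to flatness of $s$ alone; that is false over a singular target (two planes through the vertex of a cone over a quadric surface have zero-dimensional fiber product, below the expected dimension $1$), so smoothness of $X$ must enter precisely here, and your argument never uses it.

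The more serious gap is that you never connect the geometric constructions to the abstract numerical operations. Since $\pi^*$ on $N^k$ is defined as the dual of pushforward, and the product on $N^*(X)$ comes from Chern polynomials and Fulton's intersection theory, the assertions ``whence $f^*\alpha$ is strongly basepoint free'' and ``the resulting class equals $\alpha\cap\beta$'' require a proof that the fundamental cycle of the general fiber of the new family (note: the fiber is the fiber product $F_{w_1}\times_X F_{w_2}$, with its intersection multiplicities, not the scheme-theoretic intersection of images $s_1(F_{w_1})\cap s_2(F_{w_2})$) represents those classes. This is exactly the content of the paper's Lemma \ref{lem:maintechnicalbpf}: one writes $(s|_T)_*[T]=P\cap[Y]$ for a Chern polynomial $P$ (possible because the target is smooth, by Fulton, Example 15.2.16(b)), and uses the refined Gysin class $f^![T]=[X]\cdot_f[T]$ attached to the regular embedding $\Gamma_f$ to show that the canonically defined effective cycle pushes forward to $f^*P\cap[X]$, and, when the source is also smooth, equals the fundamental cycle of the fiber product. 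Without this lemma, nefness of bpf classes and Corollary \ref{cor:bpfstableint} are likewise unproved, since they rest on the same identification. A smaller gap of the same kind: your deduction of $\bpf^k(X)\subseteq\upsef^k(X)$ from effectivity plus pullback-stability only covers pullbacks to smooth varieties, whereas Definition \ref{defn:upsef} quantifies over all projective sources; the paper closes this using Proposition \ref{prop:domupsef} together with alterations.
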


\noindent We do not know if the flat pullback of cycles descends to numerical equivalence.  If it does, then one can naturally define the cone $\bpf^{k}(X) \subset N^{k}(X)$ for any projective variety $X$ and all the properties above will still hold.

\subsection{The universally pseudoeffective cone} 
If $\xi$ is a nef divisor class on $X$, and $\pi:Y\to X$ is a morphism of projective varieties, then $\pi^*\xi$ is a pseudoeffective divisor class on $Y$. In fact this property determines the nefness of $\xi$. Inspired by this we say that $\alpha\in N^k(X)$ is \emph{universally pseudoeffective} if $\pi^*\alpha$ is pseudoeffective for any morphism of projective varieties $\pi:Y\to X$ with $Y$ smooth. (The definition also makes sense when $Y$ is singular, but requires more care.) These classes form a closed convex cone denoted $\upsef^k(X)$. 

By letting $\pi$ range through inclusions of $k$-dimensional subvarieties in $X$, we see that a universally pseudoeffective class is nef. Pulling back by the identity of $X$, it follows that universally pseudoeffective classes are pseudoeffective, hence in view of the examples in \cite{delv11}, the inclusion $\upsef^k(X)\subset\Nef^k(X)$ may be strict.

\begin{thrm}(cf. \ref{rmk:upsefgood}, \ref{rem:upsefnef}, \ref{prop:domupsef}, \ref{lem:plbpfupsef}) Let $X$ be a projective variety of dimension $n$ over an algebraically closed field, and let $k\geq 0$. Then $\upsef^{k}(X)$ is a positive cone of dual classes, and in addition,
\begin{enumerate} \setcounter{enumi}{5} %[i)]
%\item $\upsef^k(X)$ is a good cone in $N^k(X)$.
\item $\pl^k(X)\subseteq\upsef^k(X)\subseteq\Nef^k(X)$.
\item When $X$ is smooth, then $\pl^k(X)\subseteq\bpf^k(X)\subseteq\upsef^k(X)\subseteq(\Nef^k(X)\cap\Eff_{n-k}(X))$.
\item Suppose $\pi: Y \to X$ is a flat morphism from a projective variety $Y$ of relative dimension $d$, and that $X$ is smooth.  If $\alpha \in \Upsef^{k+d}(Y)$ then $\pi_{*}\alpha \in \Upsef^{k}(X)$.
\item Suppose $\pi: Y \to X$ is a dominant morphism from a projective variety $Y$ and $\alpha \in N^{k}(X)$.  If $\pi^{*}\alpha$ is universally pseudoeffective, then $\alpha$ is as well.
\end{enumerate}
\end{thrm}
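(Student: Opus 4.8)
The plan is to treat the five defining properties of a positive cone of dual classes together with the four enumerated items, leaning throughout on the elementary functoriality of universal pseudoeffectivity. Two properties are immediate from the definition: pulling back by the identity shows $\upsef^k(X)\subseteq\Eff^k(X)$, which is property (4); and if $\alpha$ is universally pseudoeffective and $f\colon X'\to X$ is any morphism of projective varieties, then for every $\pi\colon Y\to X'$ with $Y$ smooth the composite $f\circ\pi$ witnesses that $f^*\alpha$ pulls back to a pseudoeffective class, so $f^*\alpha\in\upsef^k(X')$ and property (3) holds. The inclusion $\upsef^k(X)\subseteq\Nef^k(X)$ of item (6) follows by letting $\pi$ run over alterations $\widetilde V\to V\hookrightarrow X$ of $k$-dimensional subvarieties: then $\pi^*\alpha$ is a pseudoeffective class on a smooth $k$-fold, hence of nonnegative degree, and the projection formula identifies this degree with a positive multiple of $\alpha\cdot[V]$.

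For the containment $\pl^k(X)\subseteq\upsef^k(X)$ — which completes item (6) and supplies property (2) — I would invoke that the pliant cone is itself preserved by pullback and contained in the pseudoeffective cone: given $\alpha\in\pl^k(X)$ and any $\pi\colon Y\to X$, the class $\pi^*\alpha$ lies in $\pl^k(Y)\subseteq\Eff^k(Y)$, so it is pseudoeffective and $\alpha$ is universally pseudoeffective. Since complete intersections of ample divisors lie in the strict interior of $\pl^k(X)$, they lie in the interior of the larger cone $\upsef^k(X)$; this gives property (2), forces $\upsef^k(X)$ to be full-dimensional, and, combined with salience of $\Nef^k(X)$ (the dual of the full-dimensional cone $\Eff_k(X)$) and the inclusion $\upsef^k\subseteq\Nef^k$, gives property (1).

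The key technical lemma, used for both property (5) and item (9), is that on a smooth projective $W$ the cap product of a universally pseudoeffective class with a pseudoeffective class is again pseudoeffective; I would prove this by writing the pseudoeffective partner as a limit of subvarieties $V$, altering $\iota\colon\widetilde V\to V\hookrightarrow W$, and using the projection formula together with the fact that the pullback of the universally pseudoeffective class to $\widetilde V$ is pseudoeffective and pushes forward to a pseudoeffective class. Property (5) then follows after pulling back by an arbitrary $\pi\colon Y\to X$: $\pi^*\alpha$ is universally pseudoeffective, $\pi^*\beta$ is pseudoeffective, and their cap product is pseudoeffective. For item (7) the outer inclusions come from item (6), the basepoint free cone theorem, and property (4); the remaining inclusion $\bpf^k(X)\subseteq\upsef^k(X)$ is handled by base change: given a strongly basepoint free $\alpha$ presented by $p\colon U\to W$ and flat $s\colon U\to X$, and any $\pi\colon Y\to X$, I form $U\times_X Y$ and observe that its general fibers over $W$ are effective cycles representing $\pi^*\alpha$ by flat base change, so $\pi^*\alpha$ is pseudoeffective.

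Items (8) and (9) are the heart of the matter and are where I expect the main difficulty, since both require compatibility of the operations with numerical dual classes on possibly singular varieties. For the flat pushforward of item (8), given $g\colon Z\to X$ with $Z$ smooth I base change $\pi$ along $g$ to obtain a flat $\pi'\colon Z'\to Z$ and $g'\colon Z'\to Y$, apply the base-change identity $g^*\pi_*\alpha=\pi'_*\,g'^*\alpha$, note that $g'^*\alpha$ is universally pseudoeffective on the possibly singular $Z'$, and use that flat pushforward preserves effectivity hence pseudoeffectivity. For the descent of item (9) I again base change $\pi$ along $g\colon Z\to X$ and pass to a smooth alteration $W$ of $Z\times_X Y$, obtaining a dominant $F\colon W\to Z$ of relative dimension $e$ with $F^*(g^*\alpha)$ universally pseudoeffective. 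Capping with a power $h^{e}$ of an ample class on $W$ keeps the class pseudoeffective by the key lemma, and the projection formula gives $F_*\bigl(F^*(g^*\alpha)\cap h^{e}\bigr)=c\,g^*\alpha$ with $c=F_*(h^e)$ a positive multiple of $[Z]$; hence $g^*\alpha$ is pseudoeffective for every smooth $Z$, so $\alpha$ is universally pseudoeffective. The principal obstacle is justifying the base-change and projection formulas at the level of numerical equivalence — in particular the singular-target definition of universal pseudoeffectivity that makes $g'^*\alpha$ meaningful and pseudoeffective — rather than any single cone-theoretic computation.
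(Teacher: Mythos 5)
Your proposal is sound and, for most of the statement, follows the same route as the paper: properties (1)--(4) and item (6) come from the chain $\pl^k(X)\subseteq\upsef^k(X)\subseteq\Nef^k(X)$ together with the pliant-cone results (pullback-stability, pseudoeffectivity of pliant classes, complete intersections interior to $\pl^k$), and item (8) is essentially the paper's own argument (base change along a test map $g:Z\to X$ with $Z$ smooth, the push-pull identity $g^*\pi_*\alpha=\pi'_*g'^*\alpha$, componentwise pseudoeffectivity on $Z'$, and properness of pushforward; the paper additionally prereduces to test maps generically finite onto their image via Proposition \ref{prop:bircritforbpf}, but this is a convenience, not a necessity). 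Two points are genuinely different. First, you isolate a key lemma --- on a smooth projective variety, an upsef class capped with a pseudoeffective class is again pseudoeffective --- and use it to establish stability under the intersection product (property (5)); the paper never states or proves (5) explicitly, so here you supply a detail the paper leaves implicit, and your limit-of-effective-cycles-plus-alterations proof of the lemma is correct. Second, your item (9) passes to a smooth alteration $W$ of a dominating component of $Z\times_X Y$ and then cuts back down by ample powers using the key lemma and the projection formula, whereas the paper (Proposition \ref{prop:domupsef}) simply chooses a subvariety $T\subset Z\times_X Y$ with $\dim T=\dim Z$ dominating $Z$ --- a multisection, which exists since $\pi$ is dominant hence surjective --- pulls $\pi^*\alpha$ back to $T$, and pushes forward along the generically finite map $T\to Z$. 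The paper's multisection trick is shorter, avoids de Jong's alterations entirely, and needs no product-type lemma; yours works too and is a natural byproduct of machinery you already built. One caution: in item (7), ``by flat base change'' is not literally correct, since $\pi$ itself is not flat (only $s$ is), and identifying the class of the general fiber of $U\times_X Y\to W$ with $\pi^*\alpha\cap[Y]$ --- where $\pi^*$ is defined abstractly as the dual of $\pi_*$ --- is exactly the graph/Gysin and Chern-polynomial argument of Lemma \ref{lem:maintechnicalbpf}, which requires $X$ smooth; similarly $g'^*\alpha$ on the possibly reducible $Z'$ in item (8) must be read componentwise. You flag both issues yourself as the principal obstacles, and with those details supplied as in the paper, your argument is complete.
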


While a priori weaker than pliancy or basepoint freeness, universal pseudoeffectivity is easier to compute.

\begin{exmple}
\begin{itemize}
\item[(\ref{ex:upsefcurves})] If $X$ is a smooth projective variety of dimension $n$, then $\upsef^{n-1}(X)=\Nef^{n-1}(X)=\Mov_1(X)$, where the latter is the movable cone of curves in the sense of \cite{bdpp04}.
\item[(\ref{ex:sphericalupsef})] If $X$ is a smooth spherical (e.g. toric) variety of dimension $n$, then $\Upsef^k(X)=\Nef^k(X)\subseteq\Eff_{n-k}(X)$ for all $k$.
\item[(\ref{ex:projbundleupsef})] The same conclusion holds if $X$ is a projective bundle over a smooth projective curve.
\item[(\ref{ex:flexupsef})] If $S$ is a smooth projective surface, and $F$ is a rank-two ample vector bundle on $S$, then the zero section of the total space of $F$ sitting as an open subset in $X=\mathbb P(\mathcal O\oplus F^{\vee})$ is in the strict interior of $\upsef^2(X)$.  
\end{itemize}
\end{exmple}

We also give a simpler criterion for testing universal pseudoeffectivity.

\begin{prop}(cf. \ref{prop:bircritforbpf}) Let $X$ be a projective variety over an algebraically closed field. A class $\alpha\in N^k(X)$ is universally pseudoeffective if and only if $\pi^*\alpha$ is pseudoeffective for any projective morphism $\pi:Y\to X$ that is generically finite \emph{onto its image}.
\end{prop}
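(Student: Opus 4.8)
The forward implication is immediate, since morphisms that are generically finite onto their image form a subclass of all morphisms to $X$, so the defining condition for universal pseudoeffectivity already yields that $\pi^*\alpha$ is pseudoeffective. The content is the converse. Assume therefore that $\pi^*\alpha$ is pseudoeffective for every projective $\pi\colon Y\to X$ that is generically finite onto its image, and let $\pi\colon Y\to X$ be an \emph{arbitrary} morphism with $Y$ smooth and projective of dimension $m$. I must show that $\pi^*\alpha\in N^k(Y)$ is pseudoeffective, i.e.\ lies in $\Eff_{m-k}(Y)$ under the identification $N^k(Y)\cong N_{m-k}(Y)$. Set $Z=\overline{\pi(Y)}$ with its reduced structure and factor $\pi=j\circ g$, where $g\colon Y\to Z$ is surjective and $j\colon Z\hookrightarrow X$ is the inclusion. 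If $\dim Z=m$ then $\pi$ is generically finite onto its image and there is nothing to prove, so assume $d:=\dim Z<m$.

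The first step is to check that the restricted class $j^*\alpha$ is pseudoeffective on $Z$. Choosing a resolution (or, in positive characteristic, an alteration) $\nu\colon\widetilde Z\to Z$, the composite $j\circ\nu\colon\widetilde Z\to X$ is generically finite onto its image $Z$; by hypothesis $\nu^*j^*\alpha$ is therefore pseudoeffective on the smooth projective variety $\widetilde Z$. One can reach the same conclusion more explicitly by cutting $Y$ with $m-d$ general very ample divisors to obtain a smooth $Y'\subseteq Y$ for which $g|_{Y'}\colon Y'\to Z$ is generically finite and surjective, applying the hypothesis to $Y'\to X$, and pushing the resulting class forward by $g|_{Y'}$; the projection formula then recovers a positive multiple of $j^*\alpha$, which is pseudoeffective because proper pushforward preserves pseudoeffectivity.

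It remains to propagate pseudoeffectivity from $Z$ back up to $Y$ along $g$, using $\pi^*\alpha=g^*(j^*\alpha)$. Resolving the induced rational map $Y\dashrightarrow\widetilde Z$ by a birational modification $\rho\colon Y'\to Y$ with $Y'$ smooth, I obtain a morphism $g'\colon Y'\to\widetilde Z$ with $\rho^*\pi^*\alpha=g'^*(\nu^*j^*\alpha)$. If I knew that pullback along the surjective morphism $g'$ of smooth projective varieties carries pseudoeffective classes to pseudoeffective classes, then $\rho^*\pi^*\alpha$ would be pseudoeffective, and pushing forward along the birational morphism $\rho$ (which preserves pseudoeffectivity and satisfies $\rho_*\rho^*=\mathrm{id}$ on numerical classes) would give that $\pi^*\alpha$ is pseudoeffective, completing the proof.

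The main obstacle is precisely this last assertion: that pullback of a pseudoeffective cycle class along a surjective morphism of smooth projective varieties is again pseudoeffective. The plan is to reduce it to the flat case, where it is elementary, since flat pullback sends an effective cycle $[V]$ to the effective cycle $[g'^{-1}(V)]$ and hence, by passing to limits, sends pseudoeffective classes to pseudoeffective classes. To arrange flatness I would invoke generic flatness together with a Raynaud--Gruson flattening of $g'$: after blowing up the base $\widetilde Z$ and correspondingly modifying and resolving the source, one obtains a flat model, and the various birational modifications introduced along the way are absorbed by proper pushforward as above. Controlling these modifications, in particular ensuring that the flattening does not destroy pseudoeffectivity of the class being pulled back, is the delicate technical heart of the argument.
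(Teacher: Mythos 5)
Your overall skeleton---factor $\pi$ through its image, flatten the dominant part, use that flat pullback over a smooth base preserves pseudoeffectivity, and descend by proper pushforward and the projection formula---is the same as the paper's. But there is a genuine gap at exactly the step you flag as ``the delicate technical heart'': the lemma that pullback along the surjective morphism $g'$ of smooth projective varieties carries pseudoeffective classes to pseudoeffective classes is neither proved nor provable by the plan you describe. After a Raynaud--Gruson flattening of $g'$ you must pull $\nu^*j^*\alpha$ back from $\widetilde Z$ to the blown-up base $\bar Z$, and nothing you have established says that this \emph{birational} pullback of a pseudoeffective class remains pseudoeffective. That assertion is not a technicality to be ``controlled'': the failure of positivity notions to survive pullback is precisely the pathology this paper is about, and the condition ``$\pi^*\delta$ pseudoeffective for every birational (or generically finite dominant) $\pi$'' is treated throughout as genuinely stronger than pseudoeffectivity (see the fourfold example preceding Example \ref{ex:flexupsef}, and Section \ref{s:questions}). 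So your reduction is circular: the flattening step needs a special case of the very lemma it is supposed to prove.

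The fix---and this is how the paper's proof of Proposition \ref{prop:bircritforbpf} actually runs---is to never transport pseudoeffectivity from $Z$ or $\widetilde Z$ up to the flattened base at all. The hypothesis quantifies over \emph{all} projective morphisms that are generically finite onto their image, so it may be applied directly to the composite $\bar Z \to \widetilde Z \to Z \hookrightarrow X$, which is generically finite onto its image $Z$: pseudoeffectivity of the pulled-back class on $\bar Z$ is then given for free, with no propagation step. After that, flat pullback along $\bar f : \bar Y \to \bar Z$ (which is defined on numerical groups and preserves pseudoeffectivity because $\bar Z$ can be taken nonsingular, cf.\ Remark \ref{rmk:flatpullbacks}) and pushforward $\bar Y \to Y$ via the projection formula finish the argument. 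Two smaller points: once the hypothesis is applied on $\bar Z$, your first step (pseudoeffectivity of $j^*\alpha$ on $Z$ itself) becomes superfluous; and Definition \ref{defn:upsef} requires the conclusion for morphisms from arbitrary, possibly singular, projective $Y$, whereas you only treat smooth $Y$---the paper's argument needs no smoothness of $Y$, since only $\bar Z$ must be nonsingular.
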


The maps $\pi$ need not be dominant. In characteristic zero we may replace ``generically finite'' with ``birational'' in the above (cf. Remark \ref{rmk:birforupsef}). 

\subsection{Comparisons}

The following table encapsulates the properties of dual positive classes.  In our opinion, all the properties except the last are essential.

\vskip.5cm
\begin{center}
\begin{tabular}{ l | c | c | c | c}
   & pliancy & \begin{tabular}{c} basepoint \\ freeness \end{tabular} & \begin{tabular}{c} universal \\ pseudo-effectiveness \end{tabular} & nefness \\ \hline
\begin{tabular}{l} defines a \\ full-dimensional \\ salient cone? \end{tabular} & \checkmark & \checkmark & \checkmark & \checkmark \\ \hline
\begin{tabular}{l} preserved by \\ pullback? \end{tabular} & \checkmark  & \checkmark & \checkmark & \checkmark  \\ \hline
\begin{tabular}{l} contains complete \\ intersections in \\ interior of cone? \end{tabular} & \checkmark  & \checkmark & \checkmark & \checkmark  \\ \hline
\begin{tabular}{l} preserved by \\ intersection product? \end{tabular}& \checkmark  & \checkmark & \checkmark & \text{\sffamily X} \\ \hline
\begin{tabular}{l} contained in \\ $\Eff^{k}(X)$? \end{tabular} & \checkmark  & \checkmark & \checkmark & \text{\sffamily X}  \\ \hline
\begin{tabular}{l} preserved by \\ flat pushforward \\ to smooth base? \end{tabular} & ?  & \checkmark & \checkmark & \checkmark \\ \hline
\begin{tabular}{l} can be checked \\ after pullback by \\ surjective morphism? \end{tabular} & ?  & ? & \checkmark & \checkmark
\end{tabular}
\end{center}
\vskip.5cm

For $X$ smooth we have containments $\pl^k(X)\subseteq\bpf^k(X)\subseteq\upsef^k(X)\subseteq \Nef^k(X)$.  We have seen that the last inclusion may be strict, and the following important example shows that we may have strict inequalities $\pl^{k}(X) \subsetneq \upsef^{k}(X)$ and $\bpf^{k}(X) \subsetneq \upsef^{k}(X)$.

\begin{exmple}
\cite{bh15} constructs a toric fourfold $X$ and a nef surface class $\alpha$ on $X$ which give a counterexample to a question of Demailly concerning positive currents.  Their construction also shows that $\alpha$ is not basepoint free (and hence not pliant).  Since every nef class on a toric variety is universally pseudo-effective, we have $\bpf^{2}(X) \subsetneq \upsef^{2}(X)$.  (See Example \ref{upsefnotbpf} for details.)
\end{exmple}

The definition and study of pliancy and basepoint freeness are motivated by their applications, while universal pseudoeffectivity is an important intersection theoretic positivity property that they share.
It is interesting to ask how these positivity notions interact with  
other versions in the literature (for example, Hartshorne's definition  
via the ampleness of the normal bundle of an l.c.i subscheme (\cite{har70}) and Ottem's  
extension (\cite{ott12})). We discuss these connections more in Section \ref{s:questions}, together with some variations of pliancy and universal pseudoeffectivity.

\subsection*{Organization} In Section \ref{backgroundsection} we set up notation, recalling in particular the definition of the numerical groups $N_k(X)$.  We establish basic properties of the dual spaces $N^k(X)$, showing in particular that they are generated by polynomials in Chern classes of vector bundles on $X$. We also give an overview of the known properties of $\Eff_k(X)$ and $\Nef^k(X)$. Section \ref{s:pliant} is dedicated to the study of the pliant cone. As an application we recover the expected properties of $\Eff_k(X)$ that seem to have been missing from the literature. The properties and examples of the universally pseudoeffective cone are illustrated in \S\ref{s:upsef}, while the properties of the basepoint free cone, essential to the work of the authors in \cite{fl13z} and \cite{fl14k}, are described in \S\ref{s:bpf}. We end with a list of open questions in Section \ref{s:questions}.

\subsection*{Acknowledgments} We thank June Huh, Conner Jager, Alex K\" uronya, Robert Lazarsfeld, and John Christian Ottem for useful conversations. 

\section{Background and preliminaries} \label{backgroundsection}
Throughout we will work over an algebraically closed ground field $K$ of arbitrary characteristic.  A variety is an irreducible reduced scheme of finite type over $K$.   %A cycle will always mean an $\mathbb{R}$-cycle unless otherwise qualified.

\subsection{Cycles and dual cycles}
\label{ssec:cycles}

A \textit{cycle} on a projective variety $X$ is a finite formal linear combination 
$Z=\sum_i a_iV_i$ of closed subvarieties of $X$. We use the denominations \textit{integral}, \textit{rational}, or \textit{real} when the coefficients are $\mathbb Z$, $\mathbb Q$, or $\mathbb R$ respectively. When all $V_i$ have dimension $k$, we say that $Z$ is a \textit{$k$-cycle}. When for all $i$ we have $a_i\geq 0$, we say that the cycle is \textit{effective}.
To any closed \emph{subscheme} $V\subset X$ we associate its \textit{fundamental integral cycle} $[V]$ as in \cite[\S1.5]{fulton84}.

The group of integral $k$-cycles is denoted $Z_k(X)$. Its rank is usually infinite. 
In order to study the geometry of cycles on $X$, several equivalence relations have been
introduced on $Z_k(X)$. One example is rational equivalence; the rational equivalence classes form
the Chow group $A_k(X)$, which may still have infinite rank.

\subsubsection{Characteristic classes of vector bundles} \label{charclasssec}
For any vector bundle $E$ of rank $e$ on $X$ and any non-negative integer $i$, \cite[\S 3.1]{fulton84} constructs a Segre class $s_{i}(E^{\vee})$ which is a graded linear operator on the Chow groups $A_k(X)$.  \cite[\S2.5]{fulton84} first defines a first Chern class action for line bundles.
Then 
$$s_i(E^{\vee})\cap\alpha:=\pi_*(c_1^{i+e-1}(\mathcal O_{\mathbb{P}(E)}(1))\cap \pi^*\alpha)\in A_{k-i}(X)$$
for any $\alpha\in A_k(X)$, 
where $\pi:\mathbb P(E)\to X$ is the projection map of the projective bundle of quotients associated to $E$, 
%where $\mathcal O_E(1)$ is the relative Serre line bundle, 
and %where
$\pi^*$ is the flat pullback of \cite[\S1.7]{fulton84}.  The dual of $E$ appears since \cite{fulton84} works with projective bundles of lines instead of quotients.  As a result, the classes which naturally reflect the positivity of $E$ in our setting are the dual Segre classes $s_{i}(E^{\vee})$ and not the Segre classes $s_{i}(E)$. %\cite[\S2.5]{fulton84} first defines a first Chern class action for line bundles via restricting and taking a meromorphic section.
%Then 
%$$s_i(E^{\vee})\cap\alpha:=\pi_*(c_1^{i+e-1}(\mathcal O_{\mathbb{P}(E)}(1))\cap \pi^*\alpha)\in A_{k-i}(X)$$
%for any $\alpha\in A_k(X)$, 
%where $\pi:\mathbb P(E)\to X$ is the projection map of the projective bundle of quotients associated to $E$, 
%where $\mathcal O_E(1)$ is the relative Serre line bundle, 
%and %where
%$\pi^*$ is the flat pullback of \cite[\S1.7]{fulton84}. 
%(The dual of $E$ appears because \cite{fulton84} uses projective bundles of lines.)

The Chern classes of $E$ are the linear operators defined so that the total Chern class $c(E):=c_0(E)+c_1(E)+\ldots$ is a formal inverse of $s_0(E)+s_1(E)+\ldots$.
%For example $s_0(E)=c_0(E)=1$ and $c_1(E)=c_1(\det E)=s_1(E^{\vee})=-s_1(E)$.
 Since the Chern class operations are commutative and associative (see \cite[\S 3.2]{fulton84}), there is a natural way of defining $P(E_I)\cap [Z]$ for any finite collection of vector bundles $\{ E_{i} \}_{i \in I}$ on $X$, where $P(E_I)$ denotes a weighted homogeneous polynomial on the Chern classes of these bundles (assigning weight $j$ to a $j$th chern class).

We use the following properties of Chern/Segre classes:
\begin{itemize}
\item (Projection formula, cf. \cite[Theorem 3.2.(b,c)]{fulton84}) If $\pi:X\to Y$ is a proper morphism and $E_I$ a collection of vector bundles
on $Y$, then $\pi_*(P(\pi^*E_I)\cap\alpha)=P(E_I)\cap\pi_*\alpha$.
\item (Compatibility with pullback, cf. \cite[Theorem 3.2.(b,d), Proposition 6.3, Example 8.1.6]{fulton84}) If $\pi:X\to Y$ is flat, l.c.i, or if $Y$ is smooth,
then $P(\pi^*E_I)\cap\pi^*\alpha=\pi^*(P(E_I)\cap\alpha)$, where $\pi^*\alpha$ is the flat pullback \cite[\S1.7]{fulton84}, the Gysin action \cite[\S6.2]{fulton84}, or the refined intersection \cite[\S8.1]{fulton84}
respectively. 
\item (Whitney formula, cf. \cite[Theorem 3.2.(e)]{fulton84}) If $0\to E\to F\to G\to 0$ is a short exact sequence of vector bundles, then $c(F)=c(E)\cdot c(G)$.
\item (cf. \cite[Example 3.1.1]{fulton84}) If $L$ is a line bundle, then $$s_i(E\otimes L)=\sum_{j=0}^i(-1)^{i-j}{{e+i}\choose{e+j}}s_j(E)c_1(L)^{i-j}.$$
\item Any weighted homogeneous polynomial in chern classes of vector bundles $P(E_{I})$ can be rewritten as a weighted homogeneous polynomial $Q(E_{I})$ in dual Segre classes of these same vector bundles.  This is easily proved by induction using the formal inverse relationship and $c_{i}(E^{\vee}) = (-1)^{i}c_{i}(E)$.
\end{itemize}

\subsubsection{Numerical equivalence}
We will work with an equivalence relation coarser than rational equivalence. \cite[\S 19]{fulton84} defines a $k$-cycle $Z$ to be \textit{numerically trivial} if
\begin{equation}\label{eq:numtriv}\deg(P(E_I)\cap Z)=0\end{equation} 
for any weight $k$ homogeneous polynomial $P(E_I)$ in Chern classes of a finite set of vector bundles on $X$. Here $\deg:A_0(X)\to\mathbb Z$ is the group morphism that sends any point to 1. %, and $P(E_I)\cap Z\in A_0(X)$ is defined as in \cite[Chapter 3]{fulton84}. 
The quotient of $Z_{k}(X)$ by the numerically trivial cycles is denoted $N_k(X)_{\mathbb Z}$; this is a free abelian group of finite rank by \cite[Example 19.1.4]{fulton84}. It is a lattice inside $N_k(X)_{\mathbb Q}:=N_k(X)_{\mathbb Z}\otimes_{\mathbb Z}\mathbb Q$ and inside
$$N_k(X):=N_k(X)_{\mathbb Z}\otimes_{\mathbb Z}\mathbb R.$$
We call the latter the \textit{numerical group}. It is a finite dimensional real vector space, and its dimension is positive only when $0\leq k\leq\dim X$. If $Z$ is a real $k$-cycle, its class in $N_k(X)$ is denoted $[Z]$.

It is useful to consider the abstract dual notions $N^k(X)_{\mathbb Z}$, $N^k(X)_{\mathbb Q}$, and $N^k(X)$ of $N_k(X)_{\mathbb Z}$, $N_k(X)_{\mathbb Q}$, and $N_k(X)$ with coefficients $\mathbb Z$, $\mathbb Q$, and $\mathbb R$ respectively. We call $N^k(X)$ the \textit{numerical dual group}.  Note that if $P=P(E_I)$ is a weight-$k$ homogeneous polynomial in Chern classes of a finite set of vector bundles, then $P$ induces an element $[P]$ of $N^k(X)$ via the operational nature of Chern classes.  In fact, %looking at Chern polynomials as elements of ${\rm End}(N_*(X))$
we have the formal identification
\begin{equation}\label{eq:numcocyles}N^k(X)=\frac{\mbox{Homogeneous Chern } \mathbb{R}\mbox{-polynomials } P\mbox{ of weight } k}
{\mbox{Chern polynomials }P\mbox{ such that } P\cap\alpha=0\mbox{ for all }\alpha\in N_k(X)}.\end{equation}
%We have similar statements for $\mathbb Z$ and $\mathbb Q$. 

\begin{exmple} $N^1(X)$ is the N\' eron-Severi space of real Cartier divisors modulo numerical equivalence.  Indeed, using the determinant construction, we see that a first Chern class of a rank $r$ locally free sheaf can also be interpreted as a first Chern class of some invertible sheaf.  Thus, $N_{1}(X)$ is the space of curves with $\mathbb{R}$-coefficients modulo classes which have vanishing intersection against first Chern classes of invertible sheaves.  The formal dual $N^{1}(X)$ is then the real space of first Chern classes of invertible sheaves modulo those with vanishing intersection against every curves, and we interpret an invertible sheaf as a rational equivalence class of Cartier divisors.
\end{exmple}

\begin{rmk}In his senior thesis at Princeton, Conner Jager checked that if $X$ is smooth and projective, then $N^k(X)$ (and in fact even $N^k(X)_{\mathbb Q}$) is generated linearly 
by the Chern classes $c_k(E)$ as $E$ ranges through the vector bundles on $X$. (The idea is to use a 
Riemann--Roch isomorphism to show that $N^k(X)$ is generated additively by the Chern 
character classes ${\rm ch}_k(E)$, then show that $c_k(E)$ is in the linear span of ${\rm ch}_k(E^{\oplus s})$ as $s$ ranges among the positive integers.)
\end{rmk} 

\begin{exmple}If $X$ is a projective variety of dimension $n$, then $Z_n(X)=A_n(X)=N_n(X)_{\mathbb Z}=\mathbb Z\cdot[X]$. The morphism $\deg:Z_0(X)\to \mathbb Z$ that sends all points to 1 factors through an isomorphism $\deg: N_0(X)_{\mathbb Z}\to\mathbb Z$.
\end{exmple}

\begin{rmk}\label{rmk:chowattributes}The quotient map $Z_k(X)\to N_k(X)_{\mathbb Z}$ factors through $A_k(X)$. 
Using \eqref{eq:numtriv} we deduce that many of the attributes of Chow groups descend to numerical groups with their natural grading:
\begin{itemize}

\item Proper pushforwards $\pi_*$. % and pullbacks $\pi^*$ from a nonsingular base. 
Dually, the groups $N^k(X)$ have proper pullbacks $\pi^*:=(\pi_*)^{\vee}$.

\item Actions of polynomials in Chern classes for vector bundles: a weighted homogeneous polynomial $P=P(E_{I})$ of degree $i$ maps $N_k(X) \to N_{k-i}(X)$.  We denote the image of $\alpha \in N_{k}(X)$ by $P\cap \alpha$.

\item The projection formula: If $\pi:Y\to X$ is a proper morphism, and $P(E_I)$ is a 
polynomial in the Chern classes on $X$, then for any 
$\alpha\in N_k(X)$,
$$\pi_*(P(\pi^*E_I)\cap \alpha)=P(E_I)\cap \pi_*\alpha.$$

\item Gysin homomorphisms: Suppose that $\pi: Y \to X$ is an l.c.i.~morphism of codimension $d$.  Then \cite[Example 19.2.3]{fulton84} shows that the Gysin homomorphism $\pi^{*}: A_{k}(X) \to A_{k-d}(Y)$ descends to numerical groups. Similarly, $\pi^*$ exists when $\pi:Y\to X$ is a morphism of projective varieties with $X$ smooth. %In particular, $f$ defines an element of $N^{d}(X)$: the homomorphism sending $\alpha \in N_{d}(X)$ to $\deg(f^{*}\alpha)$.
\end{itemize}
\end{rmk}

\begin{rmk}\label{rmk:dualring}Multiplication of polynomials induces a graded ring structure on $N^*(X)$. We call this the \textit{numerical dual ring} of $X$.
If $\pi:Y\to X$ is a proper morphism, then $\pi^*:=(\pi_*)^{\vee}$ is a ring homomorphism by the projection formula.
\end{rmk}

\begin{notn}
Where there is little danger of confusion, we often use $\cdot$ instead of $\cap$ to denote the intersection of cycles with Chern classes or dual classes.
\end{notn}

\begin{caut}
We do not know if the flat Chow pullbacks (\cite[\S1.7]{fulton84}) respect numerical equivalence.  However, flat numerical pullbacks exist when the base is smooth; see Remark \ref{rmk:flatpullbacks}.
\end{caut}

The association $[P]\to P\cap[X]$ induces a natural map 
\begin{equation}\label{eq:cyclification} \varphi:N^{n-k}(X)\to N_k(X),\end{equation}
which is not usually an isomorphism. Its dual is the corresponding natural map $\varphi:N^k(X)\to N_{n-k}(X)$. We have similar statements for $\mathbb Q$-coefficients.

\begin{exmple}If $X$ is projective of dimension $n$, then the map $\varphi: N^1(X)\to N_{n-1}(X)$ is the
numerical version of the cycle map from Cartier divisors to Weil divisors. It is a consequence of \cite[Example 19.3.3]{fulton84} that this map is injective and the dual $\varphi:N^{n-1}(X)\to N_1(X)$ is surjective. (An element in the kernel is $[c_1(\mathcal L)]$ for some line bundle $\mathcal L$ such that $c_1(\mathcal L)\cap[X]$ is numerically trivial in the sense of \eqref{eq:numtriv}. In particular, $\deg(c_1(\mathcal L)\cdot c_1^{n-1}(\mathcal O_X(H))\cap[X])=\deg(c_1^2(\mathcal L)\cdot c_1^{n-2}(\mathcal O_X(H))\cap[X])=0$ for some ample bundle $\mathcal O(H)$ on $X$. The cited reference implies that $c_1(\mathcal L)\cap [C]=0$ for any $1$-cycle $C$. Therefore $[c_1(\mathcal L)]=0$ in $N^1(X)$.)
%We have $N^1(X)=NS(X)\otimes\mathbb R$, where $NS(X)$ is the N\' eron--Severi group of $X$.
%In general, the quotient map from $NS(X)$ to $N^1(X)_{\mathbb Z}$ may have a finite kernel.\qed
\end{exmple}

\begin{exmple}\label{ex:numsing} When $X$ is singular, quite often $\varphi$ is not an isomorphism.  For example, let $Y\subset\mathbb P^N$ be a projective variety of dimension $n$ with $\dim N_{n-1}(Y)>1$. Denote by $X:=\mathcal C(Y)\subset\mathbb P^{N+1}$ the projective cone over $Y$ of dimension $n+1$, and by $\pi:Z\to X$ the blow-up of the vertex, such that $Z$ has the structure of a projective bundle of relative dimension $1$ over $Y$ with bundle map $f:Z\to Y$. 
We claim that $\dim N_1(X)=1$ and $\dim N_n(X)=\dim N_{n-1}(Y)>1$. Therefore $\varphi:N^1(X)\to N_n(X)$ is not an isomorphism. (We verify that $\pi_*f^*$ induces an isomorphism $N_{k-1}(Y)\simeq N_{k}(X)$ for all $k>0$. Here $f^*$ is a smooth pullback, so it respects numerical equivalence. The variety $Z$ contains two notable disjoint Cartier divisors: the zero section $E$ of the geometric vector bundle associated to $\mathcal O_{\mathbb P^N}(1)|_Y$, and the compactifying hyperplane at infinity $F$. Note that $E$ and $F$ are both sections of $f$, isomorphic to $Y$ via $f|_E$ and $f|_F$ respectively, and $\pi(E)$ is the vertex of $X$, while $\pi|_F$ is the identity of $Y$. In particular $X$ contains a copy of $Y$, the intersection $\mathcal C(Y)\cap \mathbb P^N\subset \mathbb P^{N+1}$. We have $N_k(Z)=f^*N_{k-1}(Y)\oplus E\cdot f^*N_k(Y)$. Since $E$ is contracted to a point, $\pi_*(E\cdot f^*N_k(Y))=0$ for all $k>0$. On the other hand, for any $\alpha\in N_{k-1}(Y)$, we have $\pi_*f^*\alpha=0$ if and only if $\alpha=0$.
Indeed  by \cite[Theorem 6.2.(a)]{fulton84} we have $(\pi_*f^*\alpha)|_Y=(\pi|_F)_*(f^*\alpha|_F)=\alpha$.)\qed
\end{exmple}

\begin{rmk}When $X$ is \emph{smooth} and projective, the intersection theory of \cite[Chapter 8]{fulton84} endows $A_*(X)$ with a ring structure, graded by codimension. This descends to $N_*(X)_{\mathbb Z}$. 

By \cite[Example 15.2.16.(b)]{fulton84}, we have an isomorphism ${\rm ch}:K(X)\otimes{\mathbb Q}\to A(X)\otimes{\mathbb Q}$.
A consequence is that any Chow $\mathbb Q$-class is of the form $P(E_I)\cap [X]$ for some Chern polynomial $P(E_I)$ with rational coefficients. Descending to numerical equivalence, the natural morphism $\varphi:N^{n-k}(X)_{\mathbb Q}\to N_k(X)_{\mathbb Q}$ defined above is an isomorphism. The pairing between $N_k(X)_{\mathbb Q}$ and $N^k(X)_{\mathbb Q}$ induced by the ring structure is the same as their pairing as dual spaces. The analogous identification is also valid for $\mathbb R$-coefficients.
For any morphism $\pi:Y\to X$
of relative dimension $d$ from a projective scheme $Y$, we define the pullback $\pi^*:N_k(X)\to N_{k+d}(Y)$
as $\varphi\circ(\pi_*)^{\vee}$.  Note that this pullback agrees with the refined Gysin homomorphisms of \cite[Chapter 8]{fulton84}.

In particular, we recover the classical definition for numerical triviality on smooth varieties: $Z\in Z_k(X)$ is numerically trivial, if $[Z]\cdot\beta=0$ for any $\beta\in N_{n-k}(X)_{\mathbb Z}$.
\end{rmk}

\begin{rmk} \label{rmk:flatpullbacks}
Let $\pi: Y \to X$ be a flat morphism of projective varieties with $X$ smooth.  By \cite[Proposition 8.1.2]{fulton84} for any cycle $Z \in A_{k}(X)$ we have $\pi^{*}[Z] = [\pi^{-1}Z]$ (where $\pi^{-1}$ denotes the flat pull-back of cycles \cite[\S1.7]{fulton84}).
\end{rmk}

\subsection{The pseudoeffective cone}

We say that a class $\alpha\in N_k(X)$ is \textit{effective} if $\alpha=[Z]$ for some effective cycle $Z$. This notion is closed under positive linear combinations, hence it is natural to consider the following:

\begin{defn}\label{def:eff}The closure of the convex cone generated by effective $k$-cycles on $X$ in $N_k(X)$ is denoted $\Eff_k(X)$. It is called the \textit{pseudoeffective} cone.  A class $\alpha\in N_k(X)$ is called \textit{pseudoeffective} (resp. \textit{big}) if it belongs to $\Eff_k(X)$ (resp. to its interior). For classes $\alpha,\beta \in N_{k}(X)$, we use the notation $\alpha \preceq \beta$ to denote that $\beta-\alpha$ is pseudoeffective.

We say that $\beta\in N^k(X)$ is \textit{pseudoeffective} if $\varphi(\beta)\in\Eff_{n-k}(X)$, where $\varphi$ is the map of \eqref{eq:cyclification}.  The pseudoeffective dual classes form a closed cone in $N^k(X)$ that we denote $\Eff^k(X)$.
\end{defn}

The pseudoeffective cone is full-dimensional. In Corollary \ref{cor:effsalient} we show that it is also salient.

\begin{lem}\label{lem:ci big}If $h_1,\ldots,h_k$ are ample classes in $N^1(X)$, then 
$h_1\cdot\ldots\cdot h_k\cap[X]$ is big.
\end{lem}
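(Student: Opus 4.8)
The plan is to show directly that $\omega:=h_1\cdots h_k\cap[X]$ lies in the interior of the full-dimensional closed convex cone $\Eff_{n-k}(X)$. Since $\omega$ is represented by an effective cycle it lies in $\Eff_{n-k}(X)$, so if it failed to be big it would lie on the boundary, and a supporting-hyperplane argument produces a nonzero class $\nu\in N^{n-k}(X)$ that is nonnegative on $\Eff_{n-k}(X)$ and satisfies $\nu\cdot\omega=0$. As the classes of $(n-k)$-dimensional subvarieties span $N_{n-k}(X)$, such a $\nu$ must have $\nu\cdot[V]>0$ for some subvariety $V$ of dimension $n-k$. The whole statement therefore reduces to the following \emph{Key Claim}: for every projective variety, ample classes $h_1,\dots,h_k$, and subvariety $V$ of the appropriate dimension $n-k$, there is $\epsilon>0$ with $\epsilon[V]\preceq\omega$. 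Granting this, $0=\nu\cdot\omega\ge\epsilon\,\nu\cdot[V]>0$ is a contradiction, so $\omega$ is big.

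I would prove the Key Claim by induction on $k$, quantified over all projective varieties so that the dimension of the ambient variety is allowed to drop. The case $k=0$ is trivial since $N_n(X)=\mathbb R[X]$. For the inductive step, note $\dim V=n-k<n$ (as $k\ge 1$), so Serre vanishing provides a large multiple $A=sA_0$ of a very ample divisor $A_0$ together with a nonzero section of $A$ vanishing along $V$; let $D_0$ be a prime component of its divisor of zeros that contains $V$. Restriction makes $h_1|_{D_0},\dots,h_{k-1}|_{D_0}$ ample on the $(n-1)$-dimensional variety $D_0$, and $\dim V=(n-1)-(k-1)$, so the inductive hypothesis on $D_0$ gives $\epsilon'>0$ with $\epsilon'[V]\preceq h_1|_{D_0}\cdots h_{k-1}|_{D_0}\cap[D_0]$ in $N_{n-k}(D_0)$. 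Pushing forward along $\iota:D_0\hookrightarrow X$ and using the projection formula together with $\iota_*[V]=[V]$ converts this into $\epsilon'[V]\preceq h_1\cdots h_{k-1}\cap[D_0]$ in $N_{n-k}(X)$.

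It then remains to dominate $h_1\cdots h_{k-1}\cap[D_0]$ by a multiple of $\omega$. Choosing a real number $N$ with $Nh_k-A$ ample, the divisor class $Nh_k\cap[X]-[D_0]$ is pseudoeffective, being the sum of the ample class $(Nh_k-A)\cap[X]$ and the effective class $A\cap[X]-[D_0]$ (recall $D_0$ is a component of a divisor in $|A|$). Intersecting with the ample product $h_1\cdots h_{k-1}$ preserves pseudoeffectivity, so $h_1\cdots h_{k-1}\cap[D_0]\preceq N\omega$. Combined with the inequality from the previous paragraph, this yields $\epsilon'[V]\preceq N\omega$, which is the Key Claim with $\epsilon=\epsilon'/N$.

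The main obstacle is the Key Claim, and the one structural input it rests on that must be verified separately is that intersection with an ample class preserves pseudoeffectivity. For a subvariety $W$ and an ample class $h$, the projection formula gives $h\cap[W]=\iota_*\big((h|_W)\cap[W]\big)$ with $h|_W$ ample, so $h\cap[W]$ is the pushforward of the (pseudoeffective) class of an ample divisor on $W$; the general statement follows by linearity and the continuity of the intersection product on the closed cone $\Eff$. With this in hand the coefficient bookkeeping is harmless: the $h_i$ may be taken to be arbitrary real ample classes, since neither $A$ nor $N$ above need be integral multiples of the $h_i$ themselves.
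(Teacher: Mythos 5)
Your proof is correct, but it follows a genuinely different route from the paper's. Both arguments ultimately show that $\omega = h_1\cdots h_k\cap[X]$ dominates positive multiples of enough subvariety classes, but they implement this very differently. The paper's proof is a single geometric construction: after reducing to the case $h_i \in N^1(X)_{\mathbb{Q}}$, it fixes subvarieties $Z_j$ of dimension $n-k$ whose classes span $N_{n-k}(X)$ (so that $\alpha = \sum_j [Z_j]$ is automatically big), and then chooses $m \gg 0$ and ample divisors $D_i \in |mh_i|$, each containing $\bigcup_j Z_j$ in its support and with $\bigcap_i D_i$ of dimension $n-k$; the resulting proper-intersection cycle is effective and contains every $Z_j$ with multiplicity at least one, whence $m^k h_1\cdots h_k\cap[X] = \alpha + \beta$ with $\beta$ effective, and a big class plus a pseudoeffective class is big. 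You instead reduce bigness, via a supporting hyperplane, to your Key Claim that $\omega$ dominates a multiple of \emph{every} subvariety class of dimension $n-k$, and you prove that claim by induction on $k$, cutting by one hypersurface through $V$ at a time and invoking only formal operational properties: the projection formula, pushforward of pseudoeffective classes, and preservation of pseudoeffectivity under capping with an ample class (which you rightly single out and verify). The trade-off: the paper's argument is much shorter, but it needs a simultaneous general-position choice of the $D_i$ through all the $Z_j$ and the identification of intersection multiplicities of a proper intersection on a possibly singular $X$; your argument avoids proper-intersection multiplicities altogether and handles real ample classes directly without the initial reduction to rational ones, at the cost of a longer induction and some convexity bookkeeping. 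One minor imprecision: your opening claim that $\omega$ ``is represented by an effective cycle'' is literally true only for rational $h_i$ (for real classes one takes limits to get $\omega \in \Eff_{n-k}(X)$); in fact your argument does not need $\omega \in \Eff_{n-k}(X)$ at all, since a functional separating $\omega$ from the closed cone, with $\nu\cdot\omega \le 0$ and $\nu \ge 0$ on $\Eff_{n-k}(X)$, produces the same contradiction.
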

\begin{proof}  It suffices to consider the case when each $h_{i} \in N^{1}(X)_{\mathbb{Q}}$. If $Z_j$ are distinct subvarieties whose classes generate $N_{n-k}(X)$, then $\alpha=\sum_j[Z_j]$ is big. There exists an integer $m\gg 0$ and ample Cartier divisors $D_1,\ldots, D_k$ of class $mh_1,\ldots,mh_k$ respectively such that each $D_i$ contains $\cup_jZ_j$ in its Weil support and the set $\cap_iD_i$ is of dimension $n-k$. Then 
$m^kh_1\cdot\ldots\cdot h_k\cap[X]=\alpha+\beta$ for some effective class $\beta$.
The sum of a big and a pseudoeffective class is big.
\end{proof}

(Pseudo)effectivity is the natural covariant positivity notion for cycles; it is preserved under proper pushforward and flat pullback from a smooth target. We can say more
when dealing with a dominant morphism.

\begin{rmk}\label{rmk:dominant eff}Let $\pi:Y\to X$ be a dominant morphism of projective varieties.
If $Z\subset X$ is an arbitrary closed subvariety of dimension $k$, then there exists
an effective class $\alpha\in N_k(Y)_{\mathbb Q}$ such that
$\pi_*\alpha=[Z]$. (Let $h$ be an ample divisor class in $N^1(Y)_{\mathbb Z}$ 
and let $T$ be an irreducible component of $\pi^{-1}Z$ that dominates $Z$.
Let $d$ be the relative dimension of the induced morphism $\pi:T\to Z$.
Then $h^d\cdot[T]$ is an effective $\mathbb Q$-cycle and 
$\pi_*(h^d\cdot[T])=c[Z]$ for some $c\in\mathbb Q_{+}$. 
Put $\alpha=\frac 1c h^d\cdot[T]$.)

A consequence is that $\pi_*\Eff_k(Y)$ has dense image in $\Eff_k(X)$. 
Note that it is possible for the image of a closed convex cone under a linear
map of finite dimensional real vector spaces to no longer be closed.
We nonetheless prove that $\pi_*\Eff_k(Y)=\Eff_k(X)$ in Corollary \ref{cor:surjeff}.
\end{rmk}

\subsection{The nef cone}

The cone dual to $\Eff_k(X)$ in $N^k(X)$ is the \textit{nef} cone $\Nef^k(X)$. Any element $\beta\in\Nef^k(X)$ is called \textit{nef}. Attesting to its
contravariant nature, nefness is preserved under proper pullbacks.

\begin{rmk}Since $\Eff_k(X)$ is full-dimensional, $\Nef^k(X)$ is salient.
In Lemma \ref{lem:pliantupsef} we see that this cone is also full-dimensional.
\end{rmk}

\begin{rmk}Let $\pi:Y\to X$ be a dominant morphism of projective varieties.
If $\beta\in N^k(X)$ is such that $\pi^*\beta\in\Nef^k(Y)$, then $\beta\in\Nef^k(X)$.
(Nefness on $X$ is verified by testing nonnegative pairing against the closed
subvarieties of dimension $k$. Then apply the projection formula and Remark \ref{rmk:dominant eff}.)
\end{rmk}

\begin{exmple}If $h_1,\ldots,h_k\in\Nef^1(X)$, then $h_1\cdot\ldots\cdot h_k\in\Nef^k(X)$.  In Corollary \ref{cor:ci nef} we see that if $h_i$
are ample for all $i$, then their intersection is in the interior of $\Nef^k(X)$. Note that complete intersections do not always generate a full-dimensional cone.
For example when $X=G(2,4)$, then $\dim N^1(X)=1$ while $\dim N_2(X)=2$.

More generally, if $E$ is a nef vector bundle (i.e. $\mathcal O_{\mathbb P(E)}(1)$ is a nef line bundle), then $c_k(E)\in\Nef^k(X)$ for all $k$. (This is an easy consequence of \cite[Example 12.1.7.(c)]{fulton84}.)  If $E$ is ample, then $c_k(E)$ belongs to the strict interior of $\Nef^k(X)$  for all $k$.\qed
\end{exmple}

For $k>1$, \cite{delv11} provides examples of nef classes that do not have nef intersection in $N^*(X)$, and of nef classes that are not pseudoeffective.  There, $X$ is a self-product of an abelian variety. In the next section we present a more geometric positivity notion that avoids such pathologies.

\section{The pliant cone}\label{s:pliant}

Just as nef divisors (considered as as limits of semiample divisors) are modeled after the hyperplane class on $\mathbb P^n$, pliant classes are modeled after Schubert cycle classes on
Grassmannians up to taking products.  

\begin{defn}(\cite[\S14.5]{fulton84})
Fix positive integers $k$ and $e$.  Let $\lambda = (\lambda_{1},\ldots,\lambda_{k})$ be a decreasing partition of $k$ involving only non-negative integers that are no greater than $e$.  The (weighted) Schur polynomial $s_{\lambda}$ is defined to be the determinant in formal variables $c_{1},\ldots,c_{e}$
\begin{equation*}
s_{\lambda} := \left| \begin{array}{cccc} c_{\lambda_{1}} & c_{\lambda_{1}+1} & \ldots & c_{\lambda_{1}+k-1} \\ c_{\lambda_{2} - 1} & c_{\lambda_{2}} & \ldots & c_{\lambda_{2}+k-2} \\
\vdots & \vdots & \ddots & \vdots \\ c_{\lambda_{k}-k+1} & c_{\lambda_{k}-k+2} & \ldots & c_{\lambda_{k}} \end{array} \right|
\end{equation*}
where by convention $c_{0} = 1$ and $c_{i} = 0$ if $i \not \in [0,e]$.  If we assign the weight $i$ to the variable $c_{i}$, then $s_{\lambda}$ is a degree $k$ weighted-homogeneous polynomial.  Given a vector bundle $E$ of rank $e$, then $s_{\lambda}(E)$ denotes the corresponding Schur polynomial in the Chern classes of $E$. The Chern classes $c_k(E)$ and the dual Segre classes $s_k(E^{\vee})$ are particular cases of this construction.
\end{defn} 

Let $Q$ denote the tautological quotient bundle over a Grassmannian $\mathbb{G}$. Note that $Q$ is globally generated. The Schubert cycles on $\mathbb{G}$ have numerical class given by Schur polynomials in the Chern classes of $Q$ (see \cite[\S 14.6]{fulton84} or \cite[Remark 8.3.6]{lazarsfeld04}). When $E$ is a globally generated vector bundle on $X$, then the Schur polynomial classes of $E$ are pullbacks of Schubert cycle classes via the induced Gauss map. See also Example \ref{ex:grassmann}.

\begin{defn}\label{def:pliant}The \textit{pliant cone} $\pl^k(X)$ is the closed convex cone in $N^k(X)$ 
generated by monomials $\prod_is_{\lambda_i}(E_i)$ in Schur polynomial classes %(see \cite[Example 8.3.7]{lazarsfeld04})
of globally generated vector bundles on $X$. 
\end{defn}

The construction is also motivated by the work of Fulton--Lazarsfeld \cite{fl83} on positive polynomials of ample vector bundles. We will see that the pliant cone satisfies the desirable intersection-theoretic properties described in the introduction.

\begin{exmple}
For any globally generated vector bundle $E$ we have that $c_{1}(E)$ is the class of the nef line bundle $\det E$.  Thus $\pl^{1}(X)=\Nef^1(X)$.
\end{exmple}

\begin{rmk} Note that the definition of the pliant cone is stable under products. In particular, the class of a complete intersection is pliant. We do not know if the pliant cone coincides with the cone generated by Schur polynomial classes of globally generated bundles (without taking products).
\end{rmk}

\begin{lem}\label{lem:pl generates}The pliant cone $\pl^k(X)$ generates $N^k(X)$ as a vector space, i.e. it is full-dimensional.
\end{lem}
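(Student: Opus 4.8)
The plan is to combine the identification \eqref{eq:numcocyles} with the fact that any vector bundle becomes globally generated after twisting by a sufficiently positive line bundle. By \eqref{eq:numcocyles}, the space $N^k(X)$ is spanned by the classes of weight-$k$ homogeneous polynomials in the Chern classes of finite collections of vector bundles on $X$. Since the pliant cone is closed under products, its $\mathbb{R}$-linear span $V := \mathrm{span}_{\mathbb{R}}\,\pl^*(X) \subseteq N^*(X)$ is closed under multiplication: a product of two pliant generators is again a pliant generator, so bilinearity gives $V \cdot V \subseteq V$. A weight-$k$ Chern polynomial is an $\mathbb{R}$-linear combination of monomials $\prod_t c_{j_t}(E_{i_t})$, so to prove that $\pl^k(X)$ spans $N^k(X)$ it suffices to show that every single Chern class $c_j(E)$ of an arbitrary vector bundle $E$ on $X$ lies in $V$.

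To establish this, fix $E$ of rank $r$ and choose a very ample line bundle $L$ on $X$. For $m \gg 0$ the twist $F := E \otimes L^{\otimes m}$ is globally generated by Serre's theorem, and $L$ is globally generated as well. Hence every Chern class $c_i(F)$ is pliant (being a Schur polynomial class of a globally generated bundle), and $\ell := c_1(L^{\otimes m}) = m\,c_1(L)$ is a nonnegative multiple of the pliant class $c_1(L) = s_{(1)}(L)$, hence pliant; consequently each product $c_i(F)\cdot \ell^{\,j-i}$ is again a pliant generator and lies in $V$. Now the splitting principle gives the standard twisting formula
\[
c_j(F) \;=\; \sum_{i=0}^{j} \binom{r-i}{\,j-i\,}\, c_i(E)\, \ell^{\,j-i},
\]
whose coefficient of $c_j(E)$ (the term $i=j$) equals $\binom{r-j}{0}=1$. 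Thus the system expressing the $c_j(F)$ in terms of the $c_i(E)$ is unipotent and lower triangular, and may be inverted over $\mathbb{Z}$ to write $c_j(E)$ as an integral linear combination of the classes $c_i(F)\cdot \ell^{\,j-i}$ for $i \le j$. Each of these summands lies in $V$, so $c_j(E) \in V$, as desired.

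Combining the two paragraphs, every monomial in Chern classes of arbitrary bundles lies in $V$ (using that $V$ is closed under multiplication), hence so does every weight-$k$ Chern polynomial class; since these span $N^k(X)$, we conclude $V \cap N^k(X) = N^k(X)$ and $\pl^k(X)$ is full-dimensional. The only substantive point is the twisting step: the reduction to individual Chern classes and the inversion of the triangular system are formal, but one genuinely needs an ample twist making $E$ globally generated in order to realize arbitrary (a priori non-positive) Chern classes as integral, signed combinations of pliant classes.
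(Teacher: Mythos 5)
Your proposal is correct and takes essentially the same route as the paper: both reduce, via the identification of $N^k(X)$ with weight-$k$ Chern polynomials and the closure of the pliant cone under products, to expressing each individual $c_j(E)$ as a signed combination of products of Chern classes of globally generated bundles. The paper phrases the final step as an induction on $j$ using the tensor-product twisting formula for $E(mH)$, which is exactly your inversion of the unipotent triangular system.
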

\begin{proof}Since the pliant cone is closed under products, it suffices to show that for any vector bundle $E$ on $X$ the Chern class $c_{i}(E)$ can be expressed as a sum of products of Chern classes of globally generated vector bundles.  (Note that the Chern class $c_{k}$ is the Schur polynomial corresponding to the partition $(k,0,\ldots,0)$.)

The proof is by induction on $i$.  Let $H$ be a fixed very ample divisor on $X$.
There exists a positive integer $m$ such that $E(mH)$ is globally generated.  The formula for chern classes of a tensor product (as recalled in Section \ref{charclasssec}) expresses $c_{i}(E(mH))$ as a sum of $c_{i}(E)$ with other terms involving $c_{1}(H)$ and $c_{j}(E)$ for $j<i$.  By induction on $i$, we conclude that $c_{i}(E)$ can be written as a linear combination of products of Chern classes of globally generated vector bundles.
\end{proof}

\begin{rmk}\label{rmk:plpull}Since global generation is preserved by the pullback of vector bundles, 
if $\pi:Y\to X$ is a morphism of projective varieties, then $\pi^*\pl^k(X)\subset\pl^k(Y)$.
\end{rmk}

\begin{lem}\label{lem:pliantupsef}If $Z\subset X$ is a subvariety of dimension $d$, then for any $[P]\in\pl^k(X)$, we have $P\cap[Z]\in\Eff_{d-k}(X)$. In particular, we have an inclusion $\pl^k(X)\subset\Eff^k(X)\cap\Nef^k(X)$ so that %The cone $\Eff^k(X)$ was defined in \ref{def:eff}. Moreover
$\pl^k(X)$ is a salient cone and $\Nef^k(X)$ is full-dimensional.\end{lem}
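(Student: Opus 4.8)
\noindent The plan is to reduce to the generators of $\pl^k(X)$ and then to realize each as a pullback of an effective class from a product of Grassmannians. Since $\Eff_{d-k}(X)$ is a closed convex cone and the map $N^k(X)\to N_{d-k}(X)$, $[P]\mapsto P\cap[Z]$, is linear and continuous, it suffices to treat a single monomial $P=\prod_i s_{\lambda_i}(E_i)$ with each $E_i$ globally generated. Each such $E_i$ is the pullback $\gamma_i^*Q_i$ of the tautological quotient bundle $Q_i$ on a Grassmannian $\mathbb G_i$ under the associated Gauss map $\gamma_i:X\to\mathbb G_i$. Assembling these into $\gamma=(\gamma_i):X\to\mathbb G:=\prod_i\mathbb G_i$ and writing $\tilde Q_i=\mathrm{pr}_i^*Q_i$, we have $E_i=\gamma^*\tilde Q_i$, and $\sigma:=\prod_i s_{\lambda_i}(\tilde Q_i)\cap[\mathbb G]$ is the class of the product of Schubert varieties $\Omega:=\Omega_1\times\cdots\times\Omega_r\subset\mathbb G$; in particular $\sigma$ is effective on the smooth variety $\mathbb G$.

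\noindent Next I would transport the computation onto $Z$. Let $j:Z\hookrightarrow X$ be the inclusion and $\gamma_Z:=\gamma\circ j:Z\to\mathbb G$. Applying the projection formula for $j_*$, and then the compatibility of Schur classes with the Gysin pullback $\gamma_Z^*$ (available since $\mathbb G$ is smooth, and using $\gamma_Z^*[\mathbb G]=[Z]$), one computes
\begin{equation*}
P\cap[Z]=j_*\!\left(\prod_i s_{\lambda_i}(\gamma_Z^*\tilde Q_i)\cap[Z]\right)=j_*\big(\gamma_Z^*\sigma\big),
\end{equation*}
where on the right $[Z]$ denotes the fundamental class of $Z$ as a variety. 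Since proper pushforward preserves (pseudo)effectivity, it remains to prove that $\gamma_Z^*\sigma\in\Eff_{d-k}(Z)$; in fact I will show it is the class of an honest effective cycle.

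\noindent The heart of the argument---and the main obstacle---is to represent the Gysin pullback $\gamma_Z^*\sigma$ by an effective cycle, which I would do by a moving argument adapted to positive characteristic. The connected group $G:=\prod_i\mathrm{GL}_{N_i}$ acts factorwise and transitively on $\mathbb G$, and for $g\in G$ the translate $g\Omega$ is rationally, hence numerically, equivalent to $\Omega$, so $\gamma_Z^*\sigma=\gamma_Z^*[g\Omega]$ for every $g$ (the Gysin pullback descends to numerical equivalence because $\mathbb G$ is smooth, by Remark \ref{rmk:chowattributes}). A standard incidence-variety dimension count---valid in every characteristic, since it uses only the transitivity of the $G$-action and not generic smoothness---shows that for general $g$ the preimage $\gamma_Z^{-1}(g\Omega)$ is either empty or of the expected dimension $d-k$. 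When the intersection is dimensionally proper in this way, the refined intersection theory of \cite[Ch.~8]{fulton84} represents $\gamma_Z^*[g\Omega]$ by a non-negative cycle supported on $\gamma_Z^{-1}(g\Omega)$. Hence $\gamma_Z^*\sigma$ is effective, and so $P\cap[Z]=j_*(\gamma_Z^*\sigma)\in\Eff_{d-k}(X)$.

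\noindent Finally I would deduce the ``in particular'' assertions. Taking $Z=X$ (so $d=n$) gives $\varphi([P])=P\cap[X]\in\Eff_{n-k}(X)$, placing each generator, and hence all of $\pl^k(X)$, in $\Eff^k(X)$. Taking $\dim Z=k$ gives $P\cap[Z]\in\Eff_0(X)$, whose elements have non-negative degree; thus $\deg(P\cap[Z])=\langle[P],[Z]\rangle\geq 0$ for every $k$-dimensional subvariety $Z$, which by definition of the dual cone places $\pl^k(X)$ inside $\Nef^k(X)$. Since $\Eff_k(X)$ is full-dimensional, $\Nef^k(X)$ is salient, and therefore so is its subcone $\pl^k(X)$; and since $\pl^k(X)$ is full-dimensional by Lemma \ref{lem:pl generates} and contained in $\Nef^k(X)$, the latter is full-dimensional as well.
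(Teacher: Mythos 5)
Your proof is correct. The paper's entire proof of this lemma is the one-line citation ``This follows from \cite[Example 12.1.7.(a)]{fulton84}'', and your argument---reducing to Schur monomials of globally generated bundles, realizing them as pullbacks of Schubert classes under the Gauss map to a product of Grassmannians, moving the Schubert varieties into dimensionally proper position via the characteristic-free part (i) of Kleiman's transversality theorem, and then invoking the positivity of proper-intersection multiplicities from \cite[\S 7.1]{fulton84}---is precisely the standard proof of that cited example, so you have unfolded the black-boxed reference rather than taken a genuinely different route.
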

\begin{proof}This follows from \cite[Example 12.1.7.(a)]{fulton84}.
\end{proof}

\begin{cor}\label{cor:effsalient}The cone $\Eff_k(X)$ is salient.\end{cor}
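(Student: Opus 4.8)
The plan is to deduce salience from the full-dimensionality of the dual cone, using the standard biduality for closed convex cones. Recall that a closed convex cone $C$ in a finite-dimensional real vector space $V$ satisfies $(C^\vee)^\vee = C$, and that $C$ is salient (contains no line) if and only if its dual cone $C^\vee$ is full-dimensional. Indeed, if $C^\vee$ fails to span, one may pick a nonzero $v$ in the annihilator of the span of $C^\vee$; then both $\pm v$ pair nonnegatively with every element of $C^\vee$, so $\pm v \in (C^\vee)^\vee = C$ and $C$ contains a line. Conversely, if $\mathbb{R}v \subset C$ with $v \neq 0$, then every $\beta \in C^\vee$ pairs to zero with $v$, so $C^\vee$ lies in the hyperplane $\{v = 0\}$ and is not full-dimensional.

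By Definition \ref{def:eff}, $\Eff_k(X)$ is by construction a closed convex cone in $N_k(X)$, and its dual cone in $N^k(X) = N_k(X)^\vee$ is precisely $\Nef^k(X)$. Thus it suffices to know that $\Nef^k(X)$ is full-dimensional, and this is exactly the final assertion of Lemma \ref{lem:pliantupsef}: the inclusion $\pl^k(X) \subset \Nef^k(X)$ together with the full-dimensionality of $\pl^k(X)$ (Lemma \ref{lem:pl generates}) forces $\Nef^k(X)$ to span $N^k(X)$. Applying the biduality fact to $C = \Eff_k(X)$ then gives that $\Eff_k(X)$ is salient.

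In this sense there is no real obstacle remaining: the content of the corollary has already been front-loaded into the fact that pliant classes are simultaneously nef and span $N^k(X)$. The only point to be careful about is the direction of the duality—it is the full-dimensionality of the nef (dual) cone, not of $\Eff_k(X)$ itself, that yields salience of $\Eff_k(X)$; full-dimensionality of $\Eff_k(X)$ would instead give salience of $\Nef^k(X)$, which is recorded separately.
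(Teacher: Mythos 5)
Your proposal is correct and is exactly the paper's argument: the paper's one-line proof (``the dual of a full-dimensional cone is salient'') invokes precisely the full-dimensionality of $\Nef^k(X)$ from Lemma \ref{lem:pliantupsef} together with biduality of closed convex cones, which you have simply spelled out in detail, including the correct direction of the duality.
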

\begin{proof} The dual of a full-dimensional cone is salient.\end{proof}

\begin{rmk}The previous result seems to have been missing from the literature.  Over $\mathbb C$, it is implied by \cite[Lemma 2.2]{djv13}. Similar statements are proved in \cite[Proposition 1.3]{bfj09} and \cite[Lemma 2.3]{chms13} for Cartier divisors.
\end{rmk}

\begin{exmple}\label{exmple:homogeneouspos}If $H$ is a projective nonsingular homogeneous space, then
$\pl^k(H)\subseteq\Eff^k(H)\subseteq\Nef^k(H)$ for all $k$.
(Lemma \ref{lem:pliantupsef} gives the first inclusion. For the second inclusion,
note that using the group action and Kleiman's Lemma (\cite[2. Theorem.(i)]{kleiman74}) we can deform any two subvarieties $V$ and $W$ of $H$ until they meet properly, hence $V\cdot W$ is algebraically equivalent to an effective cycle.) When $H$ is one of the  examples of abelian varieties in \cite{delv11}, the last inclusion is strict.\qed
\end{exmple}

\begin{exmple} \label{ex:plandflop}
On $\mathbb{P}^{1}$ consider the vector bundle $\mathcal{E} = \mathcal{O} \oplus \mathcal{O} \oplus \mathcal{O}(-1)$.  Set $X = \mathbb{P}(\mathcal{E})$ and let $\pi: X \to \mathbb{P}^{1}$ denote the projection.  Let $\xi$ denote the class of $\mathcal{O}_{\mathbb P(\mathcal E)}(1)$, and let $f$ denote the class of a fiber of $\pi$.  The Grothendieck relation is $\xi^{3} = -\xi^{2}f = -1$.  Using for example \cite[Theorem 1.1]{ful11} and \cite[Proposition 7.1]{fl13z}, we find that
\begin{equation*}
\Eff^{1}(X) = \Mov^{1}(X) = \langle f, \xi \rangle \qquad \qquad \Nef^{1}(X) = \langle f, \xi + f \rangle
\end{equation*}
and
\begin{equation*}
\Eff^{2}(X) = \langle \xi f, \xi^{2} \rangle \qquad \qquad \Nef^{2}(X) = \langle \xi f, \xi^{2} + \xi f \rangle.
\end{equation*}

\noindent We prove that $\pl^2(X)=\Nef^2(X)$. Since $\xi f = (\xi + f)f$ is a product of nef divisors, it is enough to show that $\xi^2+\xi f$ is pliant. Consider the bundle $Q$ given by the short exact sequence
$$0\to\mathcal O_{\mathbb P(\mathcal E)}(-1)\to\pi^*\mathcal E^{\vee}\to Q\to 0.$$
Then $Q$ is globally generated and $c_2(Q)=\xi^2+\xi f$. 
\qed
\end{exmple}

\begin{rmk}To compute the pliant cone we often guess that it coincides with one of the other positive cones and then construct globally generated vector bundles with specified
Schur (often Chern) classes. For the other positive cones we usually have better techniques. A telling example is that of projective bundles over curves of arbitrary genus (see \cite[Theorem 1.1]{ful11} for $\Eff_k(X)$, \cite[Proposition 7.1]{fl13z} for $\Mov_k(X)$, and Example \ref{ex:projbundleupsef} for $\upsef^k(X)$). In Example \ref{ex:projbundlebpf} we compute all the $\bpf^k(X)$ cones when $X$ is a projective bundle over $\mathbb P^1$.
\end{rmk}

\begin{exmple}\label{ex:grassmann}
If $X$ is a product of Grassmann varieties, then
$$\pl^{k}(X)=\Eff^k(X)=\Nef^{k}(X).$$ 
These cones are rational polyhedral, generated by classes of products of Schubert cycles from each Grassmann factor. In particular Definition \ref{def:pliant} agrees with the definition given in the introduction in terms of products of pullbacks of effective classes on Grassmann varieties. (Consider first the case where $X=\mathbb G$ is a single Grassmann variety. Then $N^k(\mathbb G)$ and $N_k(\mathbb G)$ admit dual bases of effective classes determined by the Schubert cycles (see \cite[\S 14.6]{fulton84} or \cite[Remark 8.3.6]{lazarsfeld04}). These are Schur classes of the  universal quotient bundle on $\mathbb G$, which is a globally generated vector bundle. When $X$ is a product, then the classes of the product of Schubert cycles in each Grassmann factor give bases of $N_k(X)$ for all $k$ by \cite[Proposition 14.6.5]{fulton84}. It is straightforward to check that the bases are dual to each other, and that they are pliant.) \qed
\end{exmple}

We show that complete intersections are in the interior of the pliant cone
and describe several important applications of this result.

\begin{lem}\label{lem:ci intpliant}If $h_1,\ldots,h_k$ are ample classes in $N^1(X)$, then $h_1\cdot\ldots\cdot h_k$ is in the interior of $\pl^k(X)$.
\end{lem}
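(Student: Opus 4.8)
The plan is to use the interior criterion for full-dimensional cones. Since $\pl^k(X)$ spans $N^k(X)$ by Lemma \ref{lem:pl generates}, the class $h_1\cdots h_k$ will lie in the interior of $\pl^k(X)$ as soon as we produce, for every $\delta$ in a spanning set of pliant generators, a real number $t>0$ with $h_1\cdots h_k\pm t\,\delta\in\pl^k(X)$ (then $h_1\cdots h_k$ is the center of a full-dimensional polytope, the convex hull of the points $h_1\cdots h_k\pm t\,\delta$ over a basis of generators, contained in the cone). As spanning set we take the monomials $\delta=\prod_j s_{\lambda_j}(E_j)$ in Schur classes of globally generated bundles, which span by the definition of the pliant cone. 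Each $h_i\in\Nef^1(X)=\pl^1(X)$ and the pliant cone is closed under products, so $h_1\cdots h_k$ is itself pliant and the $+$ direction is immediate: $h_1\cdots h_k+t\,\delta$ is a sum of two pliant classes. The content lies entirely in the $-$ direction.

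Fix a generator $\delta=\prod_j s_{\lambda_j}(E_j)$. Each globally generated $E_j$ of rank $e_j$ induces a Gauss map $\gamma_j:X\to\mathbb G_j$ to a Grassmannian with $E_j=\gamma_j^*Q_j$, so that $s_{\lambda_j}(E_j)=\gamma_j^*\sigma_{\lambda_j}$ is a pullback of a Schubert class. Assembling these into $\Gamma=(\gamma_j)_j:X\to\mathbb G:=\prod_j\mathbb G_j$ and using that $\Gamma^*$ is a ring homomorphism (Remark \ref{rmk:dualring}), we obtain $\delta=\Gamma^*\bigl(\boxtimes_j\sigma_{\lambda_j}\bigr)$, the pullback of an effective class of codimension $k$ on the product of Grassmannians. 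Now choose any ample class $a\in N^1(\mathbb G)$. By Lemma \ref{lem:ci big} the class $a^k$ is big, i.e.~it lies in the interior of $\Eff^k(\mathbb G)$; but on a product of Grassmannians $\Eff^k(\mathbb G)=\Nef^k(\mathbb G)=\pl^k(\mathbb G)$ by Example \ref{ex:grassmann}. Hence $a^k$ lies in the interior of $\pl^k(\mathbb G)$, so $a^k-t\bigl(\boxtimes_j\sigma_{\lambda_j}\bigr)\in\pl^k(\mathbb G)$ for all small $t>0$. Pulling back by $\Gamma$, and using that $\Gamma^*$ preserves pliancy (Remark \ref{rmk:plpull}), gives $\nu^k-t\,\delta\in\pl^k(X)$, where $\nu:=\Gamma^*a$ is a nef class on $X$.

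It remains to trade the auxiliary nef class $\nu$ for the ample classes $h_i$. Since $\nu$ is nef and each $h_i$ is ample, there is an integer $N\gg0$ with $Nh_i-\nu$ ample for all $i$. Writing $Nh_i=\nu+(Nh_i-\nu)$ and expanding $\prod_i Nh_i$, every term is a product of nef divisor classes, hence pliant; isolating the pure term $\nu^k$ gives $N^k\,h_1\cdots h_k=\nu^k+\rho$ with $\rho\in\pl^k(X)$. Therefore $N^k\,h_1\cdots h_k-t\,\delta=(\nu^k-t\,\delta)+\rho\in\pl^k(X)$, and rescaling produces a $t'>0$ with $h_1\cdots h_k-t'\,\delta\in\pl^k(X)$. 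This establishes the $-$ direction for every generator $\delta$, and the interior criterion then finishes the proof. I expect the $-$ direction to be the main obstacle: one must dominate an arbitrary pliant monomial $\delta$ by a power of a single positive class, which the Gauss map reduces to the interior (bigness) statement on a product of Grassmannians, and then transfer that domination from the nef class $\nu$ to the given complete intersection via the ampleness bootstrap of the last paragraph.
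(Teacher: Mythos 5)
Your proof is correct, and while it rests on the same two pillars as the paper's --- bigness of $a^k$ on a product of Grassmannians (Lemma \ref{lem:ci big} together with $\pl^k(\mathbb{G})=\Eff^k(\mathbb{G})$ from Example \ref{ex:grassmann}), transported to $X$ through Gauss maps (Remark \ref{rmk:plpull}) --- the execution is genuinely different. The paper first reduces to showing $h^k$ is interior for a single, specially constructed ample class $h$: it takes a spanning family of \emph{ample} globally generated bundles, sums them into one bundle $E=\oplus_i E_i$, uses the Whitney formula (with Chern classes rather than general Schur classes, to keep all correction terms effective) to produce an interior point $[R(E)]$ lying inside the image cone $\gamma^*\pl^k(\mathbb{G})$, and needs ampleness of $E$ to make the Gauss map $\gamma$ finite, so that $h=\gamma^*a$ is ample; interiority of $h^k$ then follows because the image cone contains an interior point of $\pl^k(X)$. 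You dispense with all three devices: your bundles need only be globally generated, your auxiliary class $\nu=\Gamma^*a$ is merely nef, and no Whitney-formula domination is needed. Instead you certify interiority of $h_1\cdots h_k$ directly by the symmetric-perturbation criterion, running the Grassmannian pullback once per generating monomial $\delta$ (through the product of the Gauss maps of the bundles occurring in $\delta$), and transferring the resulting membership $\nu^k-t\delta\in\pl^k(X)$ to the given complete intersection by expanding $\prod_i(Nh_i)=\prod_i\bigl(\nu+(Nh_i-\nu)\bigr)$ --- the same binomial trick the paper uses in its opening reduction, but run in the opposite direction (from the auxiliary class to the $h_i$ rather than from the $h_i$ to the auxiliary class). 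What the paper's organization buys is a single application of the Grassmannian argument and a distinguished ample class pulled back from $\mathbb{G}$; what yours buys is economy of hypotheses (no ample bundles, no finiteness of the Gauss map) and avoidance of the somewhat delicate step where the paper passes from the interior of the image cone $\gamma^*\pl^k(\mathbb{G})$ to the interior of $\pl^k(X)$. Everything you invoke ($\Nef^1(X)=\pl^1(X)$, product-stability of pliancy, Lemmas \ref{lem:pl generates} and \ref{lem:ci big}, Example \ref{ex:grassmann}, Remark \ref{rmk:plpull}) is established in the paper independently of this lemma, so there is no circularity.
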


\begin{proof}Let $h$ be any ample class in $N^1(X)$. There exists $m\gg 0$ such that
$mh_i-h$ is ample for all $i$. Then $m^kh_1\cdot\ldots\cdot h_k=h^k+P$, where $[P]\in\pl^k(X)$. Therefore it is enough to show that $h^k$ is in the interior of $\pl^k(X)$
for some ample class $h$.

Lemma \ref{lem:pl generates} and its proof allow us to choose finitely many monomials in Chern classes of finitely many \textit{ample} globally generated vector bundles $\{ E_{i} \}_{i \in I}$ on $X$ 
such that these monomials span $N^k(X)$ as a vector space. 
The sum of all these monomials is a polynomial with
positive coefficients $P(E_I)$ whose class necessarily lies in the interior of $\pl^k(X)$.

If $E:=\oplus_{i\in I}E_I$, then $c(E)=\prod_ic(E_i)$, where $c(E)=1+c_1(E)+c_2(E)+\ldots$ is the total Chern class of $E$. In particular, for all $j$ and for all $i\in I$, we have 
\begin{equation}\label{eq:Chern upgrade}c_j(E)=c_j(E_i)+P_{ij}(E_I)\end{equation}
 for some $[P_{ij}(E_I)]\in\pl^j(X)$. Note that $E$ is again globally generated and ample. It is important to work with Chern classes here, instead of arbitrary Schur classes, because this ensures that the $P_{ij}$'s have no negative coefficients.

Let $R(E)$ be the polynomial obtained from $P(E_I)$ by replacing every occurrence of $c_j(E_i)$ by $c_j(E)$. By \eqref{eq:Chern upgrade}, we can write $R(E)=P(E_I)+P'(E_I)$ where $[P'(E_I)]\in\pl^k(X)$,
hence $[R(E)]$ is also in the interior of $\pl^k(X)$.

Let $\gamma:X\to\mathbb G$ be the Gauss map induced by $E$. 
Then $[R(E)]=\gamma^*[R(Q)]$, where $Q$ is the universal quotient bundle 
on $\mathbb G$. Let $C=\gamma^*\pl^k(\mathbb G)\subset\pl^k(X)$. 
Since $C$ contains $[R(E)]$, any element in the interior of $C$ is also interior to 
$\pl^k(X)$. 

Since $E$ is ample, $\gamma$ is finite. (If $\gamma$ contracts a curve $C$, then $E|_C$ is trivial. This contradicts ampleness. See \cite[Proposition 6.1.7]{lazarsfeld04}.) If $a$ is a generator for the cone of ample divisors on $\mathbb G$, then $h=\gamma^*a$ is ample on $X$. 
Lemma \ref{lem:ci big} and Example \ref{ex:grassmann} show that $a^k$ is in the interior of $\pl^k(\mathbb G)$. Then $h^k$ is in the interior of $C$, therefore also in the interior
of $\pl^k(X)$.
\end{proof}

\begin{cor}\label{cor:ci nef}If $h_1,\ldots,h_k$ are ample divisors classes, then $h_1\cdot\ldots\cdot h_k$
is in the interior of $\Nef^k(X)$.
\end{cor}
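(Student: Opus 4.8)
The plan is to deduce this immediately from the two results just established, since all the substance has already been absorbed into Lemma \ref{lem:ci intpliant}. By that lemma, the class $h_1\cdot\ldots\cdot h_k$ lies in the interior of $\pl^k(X)$. By Lemma \ref{lem:pliantupsef} we have the containment $\pl^k(X)\subseteq\Nef^k(X)$. So the only thing to check is that passing to a larger cone cannot shrink the interior: an interior point of the smaller cone remains an interior point of the larger one.

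Concretely, I would invoke the elementary topological fact that if $A\subseteq B$ are subsets of a finite-dimensional real vector space, then $\mathrm{int}(A)\subseteq\mathrm{int}(B)$, where the interiors are taken with respect to the ambient space $N^k(X)$. (If $x\in\mathrm{int}(A)$, then $x$ has an open neighborhood contained in $A$, hence contained in $B$, so $x\in\mathrm{int}(B)$.) Applying this with $A=\pl^k(X)$ and $B=\Nef^k(X)$ gives $\mathrm{int}(\pl^k(X))\subseteq\mathrm{int}(\Nef^k(X))$.

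Combining the two, $h_1\cdot\ldots\cdot h_k\in\mathrm{int}(\pl^k(X))\subseteq\mathrm{int}(\Nef^k(X))$, which is exactly the claim. I would also remark that the full-dimensionality of $\Nef^k(X)$ (established in Lemma \ref{lem:pliantupsef}) guarantees that its interior is nonempty, so the statement is not vacuous; indeed its interior is automatically nonempty once it contains interior points of $\pl^k(X)$. There is no genuine obstacle in this corollary: the entire difficulty was carried by Lemma \ref{lem:ci intpliant}, which produced the interior membership in the pliant cone, and the present argument is purely formal.
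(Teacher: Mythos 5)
Your proof is correct and is exactly the argument the paper intends: the corollary is stated without proof precisely because it follows immediately from Lemma \ref{lem:ci intpliant} together with the inclusion $\pl^k(X)\subseteq\Nef^k(X)$ of Lemma \ref{lem:pliantupsef}, via the formal fact that the interior of a subset is contained in the interior of any superset (both taken in the ambient space $N^k(X)$).
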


\subsection{Geometric applications}

\begin{cor}[Geometric norms]\label{cor:norms}If $h$ is an ample divisor class on $X$, then for all $k$
there exists a norm $\|\cdot\|$ on $N_k(X)$ such that $\|\alpha\|=h^k\cap\alpha$
for any $\alpha\in\Eff_k(X)$.
\end{cor}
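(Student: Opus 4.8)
The plan is to exhibit $\deg_h=h^k\cap(-)$ as the restriction to $\Eff_k(X)$ of the Minkowski gauge of a symmetric convex body; the one genuine input is a strict positivity statement, and everything else is routine finite-dimensional convex geometry. First I would show that $\deg_h(\alpha)>0$ for every nonzero $\alpha\in\Eff_k(X)$. By Corollary \ref{cor:ci nef} the class $h^k$ lies in the interior of $\Nef^k(X)$, which is by definition the cone dual to $\Eff_k(X)$. Since $\Eff_k(X)$ is full-dimensional and salient (Corollary \ref{cor:effsalient}), the interior of its dual cone is exactly the set of linear functionals that are strictly positive on $\Eff_k(X)\setminus\{0\}$; applying this to $h^k$ gives the claim.

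Next I would set $S:=\{\alpha\in\Eff_k(X):\deg_h(\alpha)=1\}$. This set is closed and convex, and it is bounded: if some sequence $\alpha_n\in S$ had $\|\alpha_n\|_0\to\infty$ for an auxiliary norm $\|\cdot\|_0$, then a subsequence of $\alpha_n/\|\alpha_n\|_0$ would converge to a nonzero $\beta\in\Eff_k(X)$ (the cone being closed) with $\deg_h(\beta)=\lim 1/\|\alpha_n\|_0=0$, contradicting the previous step. Hence $S$ is compact, and because $\Eff_k(X)$ is full-dimensional, $S$ has nonempty interior inside the affine hyperplane $\{\deg_h=1\}$. Since that hyperplane does not pass through the origin, an affinely independent subset of $S$ of maximal size is linearly independent, so $S$ linearly spans $N_k(X)$.

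I would then put $B:=\mathrm{conv}\bigl(S\cup(-S)\bigr)$. It is compact (the convex hull of a compact set), convex, and origin-symmetric; since $S$ linearly spans $N_k(X)$, $B$ is full-dimensional, and being symmetric it has $0$ in its interior. Thus its gauge $\|\alpha\|:=\inf\{t>0:\alpha\in tB\}$ is a genuine norm on $N_k(X)$. To conclude I check the two inequalities for $\alpha\in\Eff_k(X)\setminus\{0\}$, writing $c:=\deg_h(\alpha)>0$. Since $\alpha/c\in S\subseteq B$ we get $\|\alpha\|\le c$. Conversely, every point of $B$ can be written as $\theta p-(1-\theta)q$ with $p,q\in S$ and $\theta\in[0,1]$ (group the two families of a convex combination and use convexity of $S$), so $\deg_h$ takes only values $2\theta-1\le 1$ on $B$; therefore $\alpha\in tB$ forces $\deg_h(\alpha)\le t$, whence $\|\alpha\|\ge\deg_h(\alpha)$. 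Combining the two bounds yields $\|\alpha\|=\deg_h(\alpha)$ on $\Eff_k(X)$.

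I expect no serious obstacle here: the substantive work is already contained in Lemma \ref{lem:ci intpliant} and Corollary \ref{cor:ci nef}, which place complete intersections in the interior of $\Nef^k(X)$. The only step requiring care is the correct invocation in the first paragraph of the characterization of the interior of a dual cone, which relies essentially on the salience of $\Eff_k(X)$ established in Corollary \ref{cor:effsalient}; after that, the construction of the norm is standard convex geometry.
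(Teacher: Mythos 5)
Your proof is correct, but it follows a genuinely different route from the paper's. The paper's own argument is a two-line dualization: by Lemma \ref{lem:pl generates} one can pick pliant (hence nef) classes spanning $N^k(X)$, and since $h^k$ lies in the interior of $\Nef^k(X)$ (Corollary \ref{cor:ci nef}) one can perturb to write $h^k=\sum_i\beta_i$ with the $\beta_i$ nef and spanning $N^k(X)$; then $\|\alpha\|:=\sum_i|\beta_i\cap\alpha|$ is a norm (positive-definite because the $\beta_i$ span the dual space) which equals $h^k\cap\alpha$ on $\Eff_k(X)$ because each $\beta_i\cap\alpha\geq 0$ there. You instead prove a general convex-geometry statement: a functional in the interior of the dual of a closed, full-dimensional convex cone is strictly positive off the origin, so its level-one slice $S$ of the cone is compact, convex, and spanning, and the Minkowski gauge of $\mathrm{conv}\bigl(S\cup(-S)\bigr)$ is a norm restricting to the functional on the cone. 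Both arguments have the same essential input, namely Corollary \ref{cor:ci nef}; your route dispenses with Lemma \ref{lem:pl generates} entirely (you never need nef classes spanning $N^k(X)$, only the single interior class $h^k$ together with closedness and full-dimensionality of $\Eff_k(X)$), at the cost of more convex-geometric bookkeeping, whereas the paper's route is shorter and produces an explicit formula for the norm as a finite sum of absolute values of intersection numbers against nef classes. One small remark: the duality fact you invoke in your first paragraph does not ``rely essentially'' on salience — the direction you actually use (interior of the dual cone implies strict positivity on $\Eff_k(X)\setminus\{0\}$) is elementary and needs no hypothesis on the cone, while closedness is what the converse direction needs and salience only guarantees the interior is nonempty, which here is already witnessed by $h^k$ itself.
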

\begin{proof}By Lemma \ref{lem:pl generates} and Corollary \ref{cor:ci nef}, we can choose $\beta_1,\ldots,\beta_m$ nef dual classes that span $N^k(X)$ and such that
 $[h^k]=\sum_i\beta_i$. Then $\|\cdot\|=\sum_i|\beta_i\cap\cdot|$ is a norm on $N^k(X)$
with the required property.
\end{proof}

\begin{cor}\label{cor:deg0}Let $X$ be a projective variety. If $\alpha\in\Eff_k(X)$ has degree zero with
respect to some polarization $H$ on $X$, i.e. $\deg(c_1^k(\mathcal O_X(H))\cap\alpha)=0$, then $\alpha=0$.
\end{cor}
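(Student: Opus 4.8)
The plan is to read this off directly from the geometric norm of Corollary \ref{cor:norms}. Set $h = c_1(\mathcal{O}_X(H)) \in N^1(X)$, the ample class attached to the polarization $H$. Corollary \ref{cor:norms} then supplies a genuine norm $\|\cdot\|$ on $N_k(X)$ whose restriction to $\Eff_k(X)$ coincides with the degree function $\alpha \mapsto \deg(h^k \cap \alpha)$.

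With this in hand the argument is immediate. The hypothesis $\deg(c_1^k(\mathcal{O}_X(H)) \cap \alpha) = 0$ is exactly the assertion that $\deg(h^k \cap \alpha) = 0$. Since $\alpha \in \Eff_k(X)$, the norm identity of Corollary \ref{cor:norms} yields $\|\alpha\| = \deg(h^k \cap \alpha) = 0$, and positive-definiteness of the norm forces $\alpha = 0$.

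There is no real obstacle to surmount here: all of the content has already been absorbed into Corollary \ref{cor:norms}, which itself rests on Corollary \ref{cor:ci nef} placing the complete intersection $h^k$ in the interior of $\Nef^k(X)$. If one prefers to bypass the norm entirely, the same conclusion follows by writing $[h^k] = \sum_i \beta_i$ with each $\beta_i$ nef (as in the proof of Corollary \ref{cor:norms}): each pairing $\beta_i \cap \alpha$ is nonnegative because $\beta_i$ is nef and $\alpha$ is pseudoeffective, so their vanishing sum forces every $\beta_i \cap \alpha = 0$, and since the $\beta_i$ span $N^k(X)$ this gives $\alpha = 0$. The only point worth noting is that $\|\cdot\|$ is defined on all of $N_k(X)$ whereas the degree identity holds only on $\Eff_k(X)$; the pseudoeffectivity of $\alpha$ is precisely what makes this harmless.
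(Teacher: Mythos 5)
Your proposal is correct and matches the paper's intent exactly: the paper gives no separate proof of Corollary \ref{cor:deg0}, treating it as an immediate consequence of the geometric norm of Corollary \ref{cor:norms}, which is precisely your main argument (and your alternative unwinding via the nef classes $\beta_i$ spanning $N^k(X)$ is just the content of that corollary's proof made explicit).
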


\begin{cor}[Finiteness of integral classes of bounded degree]Let $X$ be a projective variety, and let $H$ be a very ample divisor on $X$.  Then for all $M>0$, 
$$\#\{\alpha\in N_k(X)_{\mathbb Z}\cap\Eff_k(X)\ |\ \deg(c_1^k(\mathcal O_X(H))\cap\alpha)<M\}<\infty.$$ \end{cor}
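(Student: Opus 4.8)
The plan is to reduce the statement to the elementary fact that a lattice meets any bounded region of a finite-dimensional real vector space in only finitely many points. The bridge between the arithmetic condition $\deg(c_1^k(\mathcal O_X(H))\cap\alpha)<M$ and boundedness in $N_k(X)$ is supplied by the geometric norm of Corollary \ref{cor:norms}.

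First I would set $h=c_1(\mathcal O_X(H))\in N^1(X)$, which is ample because $H$ is very ample. Applying Corollary \ref{cor:norms} to this class produces a genuine norm $\|\cdot\|$ on the full space $N_k(X)$ whose restriction to $\Eff_k(X)$ coincides with the functional $\alpha\mapsto h^k\cap\alpha=\deg(c_1^k(\mathcal O_X(H))\cap\alpha)$. Hence for every $\alpha$ in the set under consideration we have $\|\alpha\|=\deg(c_1^k(\mathcal O_X(H))\cap\alpha)<M$, so the set is contained in $\{\alpha\in N_k(X)_{\mathbb Z}\ |\ \|\alpha\|<M\}$, where we have simply dropped the constraint $\alpha\in\Eff_k(X)$ (which can only enlarge the set).

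Next I would invoke that $N_k(X)_{\mathbb Z}$ is a lattice of finite rank sitting inside the finite-dimensional real vector space $N_k(X)$, as recorded in \S\ref{ssec:cycles}. Since $\|\cdot\|$ is a norm, the region $\{\alpha\in N_k(X)\ |\ \|\alpha\|<M\}$ is a bounded (indeed precompact) subset of $N_k(X)$. A discrete subgroup of a finite-dimensional real vector space meets any bounded subset in finitely many points, so its intersection with $N_k(X)_{\mathbb Z}$ is finite, and a fortiori so is the original set.

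I do not expect a genuine obstacle here once Corollary \ref{cor:norms} is available: the whole content is front-loaded into the existence of a norm on $N_k(X)$ restricting to the $H$-degree on the pseudoeffective cone, which itself rests on the full-dimensionality of the nef cone and on complete intersections of ample classes lying in its interior. The only point demanding a moment's care is that the $H$-degree functional, while merely linear (and possibly nonpositive) on all of $N_k(X)$, becomes an honest norm once restricted to $\Eff_k(X)$ — precisely the cone in which the classes $\alpha$ of the statement are required to live.
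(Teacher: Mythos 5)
Your proof is correct and is essentially the argument the paper intends: the corollary is stated as an immediate consequence of Corollary \ref{cor:norms}, and your route --- bounding the set inside a norm ball via the geometric norm and then intersecting with the lattice $N_k(X)_{\mathbb Z}$, which is discrete of finite rank in $N_k(X)$ --- is exactly that intended argument, carried out carefully.
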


\begin{rmk}The last three corollaries were known for curve classes; see \cite[Theorem 1.4.29 and Example 1.4.31]{lazarsfeld04}. When working over $\mathbb C$, the result of Corollary
\ref{cor:deg0} can be improved to homological equivalence. See \cite[Proposition 2.1 and Lemma 2.2]{djv13}. Then Corollary \ref{cor:ci nef} is also valid for homological equivalence on complex projective varieties.
\end{rmk}

Many cohomology theories have the Strong Lefschetz property.
A long standing open question concerning numerical groups is if they verify it
as well.

\begin{conj}[Strong Letschetz] Let $X$ be a smooth projective variety of dimension
$n$. Let $h$ be an ample divisor class. Then $\cap h^{n-2k}:N^k(X)\to N^{n-k}(X)$ is an isomorphism for all $k\leq\lfloor\factor n2\rfloor$.
\end{conj}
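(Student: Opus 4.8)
The plan is to deduce the statement from the Hard Lefschetz theorem in a Weil cohomology theory, thereby reducing the entire problem to the comparison between numerical and homological equivalence. First I would reduce to injectivity. For $X$ smooth the map $\varphi\colon N^{n-k}(X)\to N_{k}(X)$ is an isomorphism and $N^{k}(X)$ is by construction the dual of $N_{k}(X)$, the duality pairing agreeing with the one induced by the ring structure; hence $\dim N^{k}(X)=\dim N_{k}(X)=\dim N^{n-k}(X)$ and the pairing $N^{k}(X)\times N^{n-k}(X)\to\mathbb{R}$ is perfect. Consequently $\cap\, h^{n-2k}$ is an isomorphism as soon as it is injective, so it suffices to treat injectivity.

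Next I would pass to a Weil cohomology $H^{\bullet}$: singular cohomology after an embedding $K\hookrightarrow\mathbb{C}$ when $\mathrm{char}\,K=0$, and $\ell$-adic \'etale cohomology when $\mathrm{char}\,K=p$. The cycle class map identifies the homological group $N^{j}_{\mathrm{hom}}(X)$ with the space $\mathrm{Alg}^{j}\subseteq H^{2j}(X)$ of algebraic classes and realizes $\cap\, h$ as cup product with the algebraic class $c_{1}(h)\in H^{2}(X)$. By the Hard Lefschetz theorem---Hodge theory in characteristic zero, Deligne's theorem via the Weil conjectures in characteristic $p$---the operator $c_{1}(h)^{n-2k}\colon H^{2k}(X)\to H^{2n-2k}(X)$ is an isomorphism. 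Since cup products of algebraic classes are algebraic, its restriction carries $\mathrm{Alg}^{k}$ injectively into $\mathrm{Alg}^{n-k}$. Thus $\cap\, h^{n-2k}\colon N^{k}_{\mathrm{hom}}(X)\to N^{n-k}_{\mathrm{hom}}(X)$ is injective unconditionally.

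The remaining, and decisive, input is the standard conjecture $D(X)$ that numerical and homological equivalence coincide. Granting $D$ in all degrees we have $N^{j}(X)=\mathrm{Alg}^{j}$; the perfect numerical pairing of the first step then forces $\dim\mathrm{Alg}^{k}=\dim\mathrm{Alg}^{n-k}$, so the injection $c_{1}(h)^{n-2k}\colon\mathrm{Alg}^{k}\hookrightarrow\mathrm{Alg}^{n-k}$ between spaces of equal dimension is an isomorphism, which is precisely the assertion for numerical groups. Equivalently, one may feed in Grothendieck's Lefschetz standard conjecture $B(X)$, the algebraicity of the Lefschetz operator $\Lambda$, which supplies both $D(X)$ and the surjectivity of Hard Lefschetz onto algebraic classes directly.

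The hard part is exactly this last step, and I do not expect to remove it: the Strong Lefschetz property for numerical groups follows from the standard conjectures and appears to be of comparable difficulty, so it remains open in general. What the argument does yield unconditionally is the injectivity of $\cap\, h^{n-2k}$ on homological groups, together with the full statement for every $X$ whose cohomology is built from varieties where $B(X)$ is known---abelian varieties (Lieberman), products of curves, rational homogeneous spaces, and smooth surfaces. A genuine subtlety to be handled carefully in the positive-characteristic case is the passage between the $\mathbb{Q}_{\ell}$-coefficients of \'etale cohomology and the real numerical group, that is, the $\ell$-independence implicit in invoking $D(X)$.
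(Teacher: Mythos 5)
You were asked to prove a statement that the paper itself does not prove: it is presented there as a \emph{conjecture}, explicitly described as ``a long standing open question concerning numerical groups,'' and the paper's only unconditional contribution toward it is the remark that Corollary \ref{cor:deg0} excludes the pseudoeffective case (no nonzero pseudoeffective class lies in the kernel of $\cap\, h^{n-2k}$). Your proposal is correctly calibrated to this reality: you do not claim a proof, and your unconditional steps are sound. The reduction to injectivity is right: for smooth $X$ the map $\varphi\colon N^{n-k}(X)\to N_{k}(X)$ is an isomorphism and the intersection pairing agrees with the dual-space pairing, so $\dim N^{k}(X)=\dim N_{k}(X)=\dim N^{n-k}(X)$ and an injective map between these spaces is an isomorphism. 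Hard Lefschetz (Hodge theory in characteristic zero, Deligne in characteristic $p$) then gives injectivity of cupping with $c_{1}(h)^{n-2k}$ on algebraic classes modulo \emph{homological} equivalence. The gap you name is the genuine one: $N^{k}(X)$ is a quotient of the homological group, injectivity upstairs says nothing downstairs, and bridging this is exactly the standard conjecture $D(X)$. So your argument is the standard conditional reduction, which is precisely where the state of the art sits; it neither proves the conjecture nor could it be expected to.

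Two points in the conditional part deserve correction or caution. First, the implication $B(X)\Rightarrow D(X)$ is a theorem of Kleiman only when the Hodge standard conjecture $\mathrm{Hdg}(X)$ is available; this is known in characteristic zero and for surfaces, but open in positive characteristic, so ``$B$ supplies $D$ directly'' must be qualified in characteristic $p$. Correspondingly, your list of unconditional cases (abelian varieties, products of curves, homogeneous spaces, surfaces) is safe in characteristic zero, but in characteristic $p$ knowing $B$ alone does not settle $D$; for abelian varieties over $\overline{\mathbb{F}}_{p}$ one can instead invoke Clozel's result that numerical and $\ell$-adic homological equivalence agree for infinitely many $\ell$ --- and, as you correctly observe, $D$ for a \emph{single} choice of Weil cohomology suffices for the numerical statement. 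Second, your framework does not recover the paper's one unconditional observation: that pseudoeffective classes can be excluded from the conjecture follows not from Hard Lefschetz but from the degree-norm statement of Corollary \ref{cor:deg0}, which is elementary and independent of any cohomological input. It would strengthen your write-up to record both of these.
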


Corollary \ref{cor:deg0} shows that we can exclude the pseudoeffective case from the conjecture. Note that the smoothness condition is necessary: If $X$ is singular, then usually $\dim N^{n-1}(X)=\dim N_{n-1}(X)>\dim N^1(X)$ (see Example \ref{ex:numsing}).
\vskip.25cm 
That the degree of a cycle with respect to an arbitrary ample polarization
restricts to a norm on the pseudoeffective cone also allows us
to construct ``bounded" lifts for effective cycles 
by dominant morphisms.

\begin{prop}\label{prop:deg cover bound} Let $\pi: Y \to X$ be a surjective morphism of projective varieties. Let $\|\cdot\|$ and $|\cdot|$ be arbitrary norms on $N_k(Y)$ and $N_k(X)$ respectively. There is some constant $C$ depending only on $\pi$ and on the choice of norms on $N_k(Y)$ and $N_k(X)$ such that for any effective $\mathbb{R}$-$k$-cycle $Z$ on $X$, there is an effective $\mathbb R$-$k$-cycle $Z'$ on $Y$ with $\pi_{*}Z' = Z$ satisfying
\begin{equation*}
\|[Z']\| \leq C|[Z]|.
\end{equation*}
When $Z$ has integer coefficients, we can choose $Z'$ having rational coefficients
with denominators bounded independently of $Z$.
\end{prop}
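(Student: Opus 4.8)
The plan is to reduce to bounding the lift of a single subvariety and then to produce such lifts uniformly by combining generic flatness with Noetherian induction. First I would dispose of two soft reductions. Since all norms on a finite-dimensional space are equivalent, it suffices, after enlarging $C$, to prove the estimate for the norms coming from Corollary \ref{cor:norms}: fix very ample divisor classes $H_X$ on $X$ and $H_Y$ on $Y$, and work with $\deg_{H_X}(\cdot)=H_X^k\cap(\cdot)$ and $\deg_{H_Y}(\cdot)=H_Y^k\cap(\cdot)$, which restrict to norms on $\Eff_k(X)$ and $\Eff_k(Y)$ respectively. Second, writing an effective cycle as $Z=\sum_i a_iV_i$ with $a_i\geq 0$ and $V_i$ distinct $k$-dimensional subvarieties, observe that $\deg_{H_X}$ is additive on effective cycles (no cancellation occurs since all coefficients are nonnegative). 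Hence if each $V_i$ admits an effective $\mathbb{Q}$-lift $Z_i'$ with $\pi_*Z_i'=V_i$, denominators dividing a fixed $N$, and $\deg_{H_Y}(Z_i')\leq C\deg_{H_X}(V_i)$, then $Z'=\sum_i a_iZ_i'$ works for $Z$ and has coefficients in $\tfrac1N\mathbb{Z}$ when $Z$ is integral. So it is enough to lift a single $k$-dimensional subvariety $V\subseteq X$ with the degree bounded linearly in $\deg_{H_X}(V)$ by a constant independent of $V$.

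For the uniform single-subvariety estimate I would run a Noetherian induction on closed subsets $S\subseteq X$, proving the existence of constants $C_S,N_S$ valid for every $k$-dimensional $V\subseteq S$. Cutting $Y$ by $e=\dim Y-\dim X$ general members of $|H_Y|$ produces a subvariety $W\subseteq Y$ of dimension $\dim X$ with $f:=\pi|_W\colon W\to X$ generically finite and surjective. By generic flatness there is a dense open $U\subseteq X$ over which $f$ is finite and flat of some degree $\delta$; set $B=X\setminus U$, a proper closed subset. Given $V$, if $V\subseteq B$ we invoke the inductive hypothesis on $B$. Otherwise the generic point of $V$ lies in $U$, and I would take $\Gamma_V$ to be the closure in $W$ of the flat pullback $f^{-1}(V\cap U)$ of \cite[\S1.7]{fulton84}, an effective $k$-cycle with multiplicities at most $\delta$. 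Since $\Gamma_V$ is supported on $f^{-1}(V)$ and agrees with $\delta[V]$ over the dense open $V\cap U$, one gets $\pi_*\Gamma_V=\delta[V]$ as cycles, so that $Z':=\tfrac1\delta\Gamma_V$ is an effective $\mathbb{Q}$-lift of $V$ with denominator dividing $\delta$.

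The crux is the degree bound $\deg_{H_Y}(\Gamma_V)\leq C\deg_{H_X}(V)$ with $C$ independent of $V$, and this is the step I expect to be the main obstacle, since a priori the $H_Y$-degree of a preimage can grow uncontrollably relative to the degree of its image. Here I would exploit that $\Gamma_V$ is finite over $V$ together with a B\'ezout-type count. Writing $\deg_{H_Y}(\Gamma_V)$ as the number, with multiplicity, of points of $\Gamma_V\cap A_1\cap\cdots\cap A_k$ for general $A_i\in|H_Y|$, each such point lies over a point of $V\cap f(W\cap A_1\cap\cdots\cap A_k)$. The section $W\cap A_1\cap\cdots\cap A_k$ ranges in a fixed linear system, so its image has $H_X$-degree bounded by a constant $C_0$ depending only on $f,H_X,H_Y$; by B\'ezout this image meets $V$ in at most $C_0\deg_{H_X}(V)$ points, and (for general $A_i$ these points lie over $U$, where) each has at most $\delta$ preimages under the finite map $f$. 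This yields $\deg_{H_Y}(\Gamma_V)\leq \delta C_0\deg_{H_X}(V)$. Equivalently, passing to a smooth model one may argue that $[\Gamma_V]$ is the value at $[V]$ of the fixed linear flat-pullback operator of Remark \ref{rmk:flatpullbacks}, whose operator norm is finite, and then use that $\deg_{H_X}$ is a norm on $\Eff_k(X)$ by Corollary \ref{cor:norms}.

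Finally, the Noetherian induction terminates after finitely many steps, effectively stratifying $X$ into finitely many locally closed pieces, each contributing its own $\delta$ and $C_0$. Taking $C$ to be the largest of the resulting constants and $N$ the least common multiple of the generic degrees $\delta$ over the strata produces the uniform constants of the statement, and the bounded-denominator assertion follows because the integral case lifts with coefficients in $\tfrac1N\mathbb{Z}$.
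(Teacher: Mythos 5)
Your architecture is genuinely different from the paper's and is, in outline, workable. The paper also reduces to a single subvariety and to degree norms via Corollary \ref{cor:norms}, and also replaces $Y$ by a generically finite cover of $X$ (a component $T$ of a complete intersection of very ample divisors), but from there it proceeds differently: it lifts $Z$ to a \emph{single} $k$-dimensional component $\bar Z$ of $\pi^{-1}(Z)$ dominating $Z$, bounds the multiplicity $c'$ in $\pi_*\bar Z=c'Z$ by $\deg(\pi')$ of a flattening, and -- this is the crux -- gets the degree bound by arranging the polarization on $Y$ to be $A=\pi^*H-E$ with $E$ effective and $-E$ $\pi$-ample: since $\bar Z\not\subseteq E$, the function $t\mapsto(\pi^*H-tE)^k\cap[\bar Z]$ is decreasing on $[0,1]$, whence $A^k\cap[\bar Z]\le\pi^*H^k\cap[\bar Z]=c'\,(H^k\cap[Z])$. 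Your replacement of this trick by generic flatness plus a point count is a legitimately different mechanism; your lift $\tfrac1\delta\Gamma_V$ (closure of the flat pullback over $U$) and the Noetherian induction on the bad locus are both fine.

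However, the B\'ezout step, which you correctly identify as the crux, has a genuine gap as written. The set $V\cap f(W\cap A_1\cap\cdots\cap A_k)$ need not be finite for \emph{any} choice of the $A_i$: if $f$ contracts a positive-dimensional subvariety of $W$ into $V$ -- say a divisor $D\subset W$ with $f(D)$ a curve contained in $V$ -- then $f(D\cap A_1\cap\cdots\cap A_k)$ equals the whole curve $f(D)$ whenever $k\le\dim W-2$, so the intersection has a positive-dimensional component. Classical B\'ezout says nothing about the number of points of such a non-proper intersection, so the assertion ``this image meets $V$ in at most $C_0\deg_{H_X}(V)$ points'' fails. The repair is to promote your parenthetical remark to the main argument: for general $A_i$ every point of $|\Gamma_V|\cap A_1\cap\cdots\cap A_k$ lies over $U$; over $U$ the intersection \emph{is} finite, because $V\cap U\cap f(W\cap A_\bullet)=f\bigl(f|_W^{-1}(V\cap U)\cap A_\bullet\bigr)$ is the image of a finite set; hence each relevant point is an isolated point, i.e.\ a zero-dimensional irreducible component, of $V\cap f(W\cap A_\bullet)$, and the number of irreducible components is bounded by the product of degrees by the \emph{refined} B\'ezout theorem (cf.\ \cite[Example 8.4.6]{fulton84}), which is the statement you actually need. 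Two smaller points: the uniform bound $C_0$ on $\deg_{H_X}f(W\cap A_\bullet)$ also requires an argument (e.g.\ choose $M$ with $MH_Y|_W-f^*H_X$ nef and cut down to the dimension of the image before applying the projection formula), and your ``equivalently'' alternative via the operator norm of a flat pullback is not available: $f$ is not flat, $X$ need not be smooth, so no such operator exists on numerical groups, and flattening $f$ replaces $X$ by a blow-up on which the degree of the strict transform of $V$ is precisely the quantity you cannot control -- avoiding that loss of control is exactly what the paper's $\pi^*H-E$ monotonicity trick accomplishes.
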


\begin{proof} By repeating the argument for each component, we can assume
that $Z$ is a closed subvariety of $X$. Let $A$ be a very ample divisor on $Y$ and let $H$ be a very ample divisor on $X$. By Corollary \ref{cor:norms}, we can assume that the restriction of the norms $\|\cdot\|$ and $|\cdot|$ to $\Eff_k(Y)$ and $\Eff_k(X)$ respectively are the degree functions with respect to the polarizations $A$ and $H$ respectively. Let $T$ be a component of a $(\dim X)$-dimensional complete intersection of elements of $|A|$ that dominates $X$.  
Then $\pi_*T=cX$ where $c$ is a positive integer depending only on $\pi$ and on $A$.

We do induction on $\dim X\geq k$. When $\dim X=k$, then $Z=aX$ for some $a\geq 0$ and we can put $Z'=\frac acT$. Put $C=\frac{\|[T]\|}{|[X]|}$.
Now suppose $\dim X>k$. Let $\imath:T\hookrightarrow Y$ be the inclusion.
We can assume that $T=Y$. Indeed $\imath_*$ is continuous and preserves
pseudoeffectivity, and the norm induced by the degree with respect to $A$
restricts to the norm induced by $A|_T$. Therefore we can assume that $\pi$ is generically finite and surjective.

There exists an effective Cartier divisor $E$ on $Y$ such that
$-E$ is $\pi$-ample. Replacing $H$ by a fixed multiple depending only on $\pi$ and $E$, we can assume that $\pi^*H-E$ is ample on $Y$. 
Using Corollary \ref{cor:norms}, the ample divisors $\pi^*H-E$ and $A$ determine equivalent norms on $N_k(Y)$, so without loss of generality we can assume that $A=\pi^*H-E$. 
By abuse we also use $E$ as notation for the support of $E$.

Let $\pi': Y' \to X'$ denote a flattening of $\pi$.  Let $S \subset X$ denote the union of $\pi(E)$ and the exceptional locus for the birational morphism $X' \to X$. Note that $\pi|_E:E\to S$ and the restrictions $A|_E$ and $H|_S$ only depend on $\pi$ and on $A$ and $H$. Also note that $\dim S_{i}<\dim X$ for every component $S_{i}$ of $S$.

By applying induction to the components $S_{i}$ of $S$ and the maps $\pi|_{S_{i}}$, we see that the conclusion holds if $Z\subset S$.  If $Z$ is not contained in $S$,
let $\bar Z$ be a $k$-dimensional component of $\pi^{-1}\{Z\}$ that dominates $Z$. Then $\pi_*\bar Z=c'Z$ where $c'>0$ and $\bar Z$ is irreducible, not contained
in $E$.  Furthermore, by taking strict transforms of $Z$ and $\bar Z$ on $X'$ and $Y'$ respectively,  \cite[Example 1.7.4]{fulton84} shows that $c' \leq \deg(\pi')$ for the flat map $\pi'$.  The function
$$t\to (\pi^*H-tE)^k\cap[\bar Z]$$
is decreasing on $[0,1]$. This and the projection formula imply
$$A^k\cap[\bar Z]=(\pi^*H-tE)^k\cap[\bar Z]\leq\pi^*H^k\cap[\bar Z]=H^k\cap c'[Z].$$
%We show that $c'$ is bounded independently of $Z$. For this, let $\pi':Y'\to X'$
%be a flattening of $\pi$, i.e. a flat birational modification of $\pi$. By induction and the previous argument, we can assume
%that the strict transform of $Z$ in $X'$ is nonempty and meets the flat locus of $\pi'$. Then we can assume
%that $\pi'=\pi$. In this case $\bar Z$ is a component of the effective cycle $\pi^*Z$. By \cite[Example 1.7.4]{fulton84}, we get $c'\leq\deg\pi$. Put $Z'=\frac 1{c'}\bar Z$.

\noindent One can choose the constant $C$ by taking the maximum over $\deg(\pi')$, all constants
showing up in the finitely many induction steps, and all finitely many constants appearing as proportionality bounds between equivalent norms.
Similarly, one obtains the last statement of the proposition by taking a maximum over $\deg(\pi')$ and all constants showing up in the finitely many induction steps. 
\end{proof}

%\vskip.5cm
%\textit{The general case.} The exist blow-ups $f:Y'\to Y$ and $g:X'\to X$ and
%a flat morphism $\pi':Y'\to X'$ such that $\pi\circ f=g\circ \pi'$. We say that
%$\pi'$ flattens $\pi$. As in the birational case, let $E$ be effective on $X'$ such
%that $-E$ is $g$-ample. Choose $A'$ ample on $Y'$ such that $A'-f^*A$ is ample.
%If $Z\subset g(E)$, the result follows by induction.

%If $Z$ is not contained in $g(E)$, let $\bar Z$ be its strict transform on $X'$.
%For $\bar Z$ and $\pi'$, construct $S$ and $t$ as in the flat case
%and put $Z'=\frac 1tf_*[S]$. The ampleness of $A'-f^*A$ implies
%$$A^k\cap[Z']\leq (A')^k\cap\frac 1t[S].$$
%The flat case for $\pi'$ and the birational case for $g$ yield
%a constant $C=C(\pi,A,H)$ such that $(A')^k\cap\frac 1t[S]\leq C\cdot (H^k\cap[Z])$.
%The result follows.

%When $Z$ has integer coefficients, then the coefficients of $Z'$ are rational
%with denominators bounded by the products of all $t$ constructed on flattenings.
%There are finitely many flattenings that may appear from the induction steps,
%and they are all independent of $Z$.
%\end{proof}

\begin{cor}\label{cor:surjeff}If $\pi:Y\to X$ is a dominant morphism
of projective varieties, then $\pi_*:\Eff_k(Y)\to\Eff_k(X)$ is
surjective for all $k$.
\end{cor}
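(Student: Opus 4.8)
The plan is to promote the density statement of Remark \ref{rmk:dominant eff} to honest surjectivity, and the only extra ingredient needed is the uniform control over lifts provided by Proposition \ref{prop:deg cover bound}. Since $\pi_*$ preserves effectivity and is continuous, the inclusion $\pi_*\Eff_k(Y)\subseteq\Eff_k(X)$ is immediate; all the content lies in the reverse inclusion, which amounts to showing that the image of the \emph{closed} cone $\Eff_k(Y)$ under the linear map $\pi_*$ is itself closed and exhausts $\Eff_k(X)$.

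Concretely, first I would fix norms $\|\cdot\|$ on $N_k(Y)$ and $|\cdot|$ on $N_k(X)$ and take an arbitrary class $\alpha\in\Eff_k(X)$. By the very definition of the pseudoeffective cone as the closure of the cone generated by effective cycles, every element of that cone is the class of an effective real $k$-cycle, so we may write $\alpha=\lim_j[Z_j]$ for effective real $k$-cycles $Z_j$ on $X$. Next I would apply Proposition \ref{prop:deg cover bound} to each $Z_j$, obtaining effective real $k$-cycles $Z_j'$ on $Y$ with $\pi_*Z_j'=Z_j$ and $\|[Z_j']\|\leq C|[Z_j]|$ for a single constant $C$ depending only on $\pi$ and the chosen norms. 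Since the convergent sequence $[Z_j]$ is bounded in $N_k(X)$, the bound forces $\|[Z_j']\|$ to be bounded in $N_k(Y)$ as well.

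Finite-dimensionality of $N_k(Y)$ then lets me extract, after passing to a subsequence, a limit $[Z_{j_\ell}']\to\beta$; because each $[Z_{j_\ell}']$ lies in the closed cone $\Eff_k(Y)$, so does $\beta$. Finally, continuity of the linear map $\pi_*$ gives $\pi_*\beta=\lim_\ell\pi_*[Z_{j_\ell}']=\lim_\ell[Z_{j_\ell}]=\alpha$, exhibiting $\alpha$ as the pushforward of a pseudoeffective class. I do not expect any serious remaining obstacle: the genuine difficulty, namely that the set-theoretic image of a closed convex cone under a linear map can fail to be closed (as flagged in Remark \ref{rmk:dominant eff}), is precisely what Proposition \ref{prop:deg cover bound} defeats by furnishing lifts of uniformly bounded norm, so that a compactness argument closes the gap. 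The only point requiring a little care is to confirm that the approximating $Z_j$ are bona fide effective real cycles so that the Proposition applies verbatim, which is built into the definition of $\Eff_k(X)$.
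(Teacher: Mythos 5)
Your proof is correct and is essentially the paper's own argument: approximate $\alpha\in\Eff_k(X)$ by effective classes, use Proposition \ref{prop:deg cover bound} to produce effective lifts on $Y$ of uniformly bounded norm, extract a limit point in the closed cone $\Eff_k(Y)$ by compactness, and finish by continuity of $\pi_*$. The only cosmetic difference is that the paper phrases the boundedness via the degree function of a polarization (invoking Corollary \ref{cor:norms}) rather than an arbitrary pair of norms, which is equivalent since Proposition \ref{prop:deg cover bound} is stated for arbitrary norms.
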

\begin{proof}Let $\alpha$ be a pseudoeffective class on $X$. 
%The effective case was covered already by Remark \ref{rmk:dominant eff}.
Write $\alpha$ as a limit of effective classes $\alpha_i$.
For each $i$, Proposition \ref{prop:deg cover bound} constructs
an effective class $\beta_i$ on $Y$ such that $\pi_*\beta_i=\alpha_i$
whose degree with respect to some polarization on $Y$ is bounded
independently of $i$. Since the degree restricts to a norm on the 
pseudoeffective cone, we can find a limit point $\beta$ for the 
sequence $\beta_i$. Note that $\beta$ is pseudoeffective. Since $\pi_*$ is continuous, $\pi_*\beta=\alpha$.
\end{proof}

We also use Lemma \ref{lem:ci intpliant} to construct bases for $N^k(X)$ with good positivity properties, at least when $X$ is smooth. The second part of the following lemma is an important technical instrument in the proof of \cite[Theorem 8.9]{fl14k}.

\begin{lem}\label{lem:pliantbases}Let $X$ be a smooth projective variety of dimension $n\geq 2$. Then
\begin{enumerate}[i)]
\item $N^k(X)$ is generated by pliant classes with irreducible representatives $\{T_r\}$.
\item If $\pi:X\to Y$ is a surjective morphism to a projective variety with $\dim Y\geq n-k$, then we can arrange such that $T_r$ is not contracted by $\pi$ for any $r$.
\end{enumerate}
\end{lem}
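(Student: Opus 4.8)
The plan is to use the identification $N^k(X)\cong N_{n-k}(X)$ (valid because $X$ is smooth), so that ``$[P_r]$ has irreducible representative $T_r$'' means $\varphi([P_r])=P_r\cap[X]=[T_r]$ for an irreducible subvariety $T_r$ of dimension $n-k$. I may assume $n-k\geq 1$, the case $n-k=0$ being trivial since a point is irreducible. First I would reduce to a representation problem: by Lemma \ref{lem:pl generates} and its proof, together with Lemma \ref{lem:ci intpliant}, the space $N^k(X)$ is spanned by classes of products $\prod_i s_{\lambda_i}(E_i)$ of Schur classes of \emph{ample} globally generated vector bundles $E_i$. (If one also wants the spanning classes interior to $\pl^k(X)$, add a small multiple of $h^k$ for $h$ ample, using Lemma \ref{lem:ci intpliant}.) Thus it suffices to realize each such product class $[P]\in\pl^k(X)$ by an irreducible $(n-k)$-dimensional subvariety.

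Next I would produce the representatives as degeneracy loci. For a Schur class $s_\lambda(E)$ of an ample globally generated bundle $E$, a general bundle map (a general tuple of sections, or a general map out of a split bundle) has a degeneracy locus of pure codimension equal to the weight of $s_\lambda$, whose fundamental class is $s_\lambda(E)\cap[X]$ by the Thom--Porteous formula; a product of Schur classes is realized by the intersection of the corresponding loci, arranged by generality to meet in the expected codimension $k$. These loci are pliant by construction. The key input for irreducibility will be the connectedness theorem of Fulton--Lazarsfeld \cite{fl83}: since the bundles are ample, the degeneracy loci (and their intersections) are connected in all characteristics once the expected dimension $n-k$ is positive. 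Away from the deeper degeneracy strata, which have strictly smaller dimension, the locus is smooth for a general map; its smooth dense open part is then connected, hence irreducible, and its closure $T$ is an irreducible subvariety representing $[P]$.

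For part (ii) I would simply make all of these choices general with respect to $\pi$. Writing $d=\dim Y\geq n-k$, a general fiber $F$ of $\pi$ has dimension $n-d$, and for a general representative the locus $T$ meets $F$ in the expected dimension $\dim T+\dim F-n=n-k-d\leq 0$. Hence $\pi|_T$ is generically finite onto its image, which has dimension $n-k$, so $T$ is not contracted. If the genericity of the degeneracy-locus construction does not by itself force proper intersection with the fibers, one twists each $E_i$ by $\pi^{*}A_Y$ for $A_Y$ ample on $Y$, coupling the construction to the fibration so that the resulting loci meet the fibers of $\pi$ properly.

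The hard part will be the passage from \emph{connectedness} to genuine \emph{irreducibility} in arbitrary characteristic. In characteristic zero this is immediate from Kleiman transversality: a general degeneracy locus is smooth outside a smaller stratum and connected, hence irreducible. In positive characteristic generic smoothness can fail, so the real content is to combine the characteristic-free Fulton--Lazarsfeld connectedness with enough control of the singular locus---for instance via the Cohen--Macaulayness and expected codimension of general degeneracy loci---to conclude that the general representative is reduced and irreducible, and to carry this out for the intersections arising from products of Schur classes. I expect this reducedness/irreducibility step, rather than the spanning or the dimension count in (ii), to be the genuine obstacle.
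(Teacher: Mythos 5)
There is a genuine gap in part (i), and it sits exactly where you predicted: the passage to irreducibility. Your argument rests on two inputs that are not available over an arbitrary algebraically closed field. First, the connectedness theorem for degeneracy loci of ample bundles (which, incidentally, is not what \cite{fl83} proves --- that paper establishes positivity of Schur polynomials; connectedness is in the 1981 Acta paper of Fulton--Lazarsfeld) is proved over $\mathbb{C}$ by topological methods, so asserting that the loci ``are connected in all characteristics'' is unjustified as stated, though for globally generated ample bundles one might hope to extract a characteristic-free statement from Fulton--Hansen. Second, and more seriously, your route from connectedness to irreducibility (connected $+$ Cohen--Macaulay $+$ singular only along the deeper, codimension $\geq 2$ stratum $\Rightarrow$ irreducible) needs generic smoothness of the general degeneracy locus away from the next stratum, and you yourself concede this fails in positive characteristic without offering a substitute. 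Since the lemma is stated in arbitrary characteristic, what you have is at best a characteristic-zero argument with the key step deferred. Part (ii) has an independent gap: knowing that $T$ meets a general fiber $F$ in dimension $\dim T + \dim F - n \leq 0$ does not rule out $T \cap F = \emptyset$, in which case $T$ lies over a proper subvariety of $Y$ and could perfectly well be contracted; and since degeneracy loci carry no transitive group action, there is no Kleiman transversality to make your representatives ``general with respect to $\pi$,'' again especially in characteristic $p$.

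The paper's proof is engineered precisely to avoid degeneracy loci. It spans $N^k(X)$ by weight-$k$ monomials in \emph{dual Segre} classes $s_j(E_i^{\vee})$ of very ample bundles, and realizes each such monomial as $p_*\bigl(\prod_{j=1}^{d+k}\xi_{i_j}\bigr)$, where $p:\mathbb{P}:=\times_X\mathbb{P}(E_i)\to X$ is the fiber product of the projectivizations and the $\xi_i$ are the (very ample) Serre classes, following the proof of \cite[Proposition 3.1.(b)]{fulton84}. Upstairs one needs only Bertini's irreducibility theorem, valid in every characteristic: since $\dim X\geq 2$ and the $|\xi_i|$ are not composite with pencils, a general complete intersection of the $\xi_{i_j}$ is irreducible, and $T_r$ is defined as its (automatically irreducible) image under $p$. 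For (ii) the paper substitutes positivity for transversality: after replacing each $E_i$ by $E_i\otimes\det E_i$, every nonzero dual Segre class contains a positive multiple of the interior complete-intersection class $c_1^j(\det E_i)$ as a summand, so by Lemma \ref{lem:ci intpliant} and closure of pliancy under products each $[T_r]$ lies in the \emph{interior} of $\pl^k(X)$; an interior class pairs strictly positively with the nonzero effective class $\pi^*h^{n-k}$ (here $h$ is very ample on $Y$ and $\dim Y\geq n-k$ guarantees $\pi^*h^{n-k}\neq 0$), so $T_r$ cannot be contracted. If you want to rescue your approach you must prove characteristic-$p$ irreducibility of general degeneracy loci, which is exactly the hard problem the projective-bundle detour circumvents.
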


\begin{proof}Arguing as in Section \ref{charclasssec} and Lemma \ref{lem:pl generates}, we can find a set of very ample vector bundles $\{E_i\}$ such that $N^k(X)$ is generated by weight $k$ monomials in dual Segre classes $s_j(E_i^{\vee})$. These monomials belong to $\pl^k(X)$. 

Let $\mathbb P$ denote the fiber product $\times_X\mathbb P(E_i)$, let $\xi_i$ denote the pullback to $\mathbb P$ of the Serre bundle $\mathcal O_{\mathbb P(E_i)}(1)$, and let $p:\mathbb P\to X$ be the (smooth) projection map of relative dimension $d$. 

The proof of \cite[Proposition 3.1.(b)]{fulton84} shows that the weight $k$ dual Segre monomials in the $E_i$ are given by $p_*(\prod_{j=1}^{d+k}\xi_{i_j})$. The number of repetition of each index $i$ in the list of the $i_j$ determines which dual Segre class of $E_i$ appears in the monomial. Since we want to allow several Segre classes of the same bundle to appear in a monomial, we repeat each $E_i$ in the initial list $k$ times so that each class can be obtained from a different factor in $\mathbb P$.

Since $\dim X\geq 2$ and the $E_i$ are very ample vector bundles, the linear systems $|\xi_i|$ are not composites with a pencil for any $i$. Then Bertini's theorem implies that the support of a general complete intersection $\prod_{j=1}^{d+k}\xi_{i_j}$ is irreducible, and then the same is true of its image $T_r$ through $\pi$. 

For part $ii)$, let $h$ be a very ample divisor class on $Y$. Since $T_r$ is effective, it is contracted by $\pi$ if and only if $[T_r]\cdot\pi^*h^{n-k}=0$. The class $\pi^*h^{n-k}$ is effective and nonzero under the assumption $\dim Y\geq n-k$. It is enough to prove that $[T_r]$ belongs to the interior of $\pl^k(X)$. Knowing that pliancy is closed under products, and that complete intersections are interior (cf. Lemma \ref{lem:ci intpliant}), it is enough to check that we can choose $E_i$ such that every nonzero $s_j(E_i^{\vee})$ belongs to the interior of $\pl^j(X)$. For this, replace each $E_i$ by $E_i\otimes\det E_i$ in the initial list. Note that $c_1(\det E_i)=c_1(E_i)=s_1(E_i^{\vee})$. Then the formula in \cite[Example 3.1.1]{fulton84} shows that the linear span of the dual Segre monomials is unchanged. Furthermore $s_j((E_i\otimes\det E_i)^{\vee})$ is a positive linear combination of dual Segre monomials of $E_i$, one of which is a positive scalar multiple of the interior complete intersection class $c_1^j(\det E_i)$.
\end{proof}

\section{Universally pseudoeffective classes}\label{s:upsef}

Universally pseudoeffectivity is the positivity notion that directly
generalizes the intersection theoretic properties of nef divisors.

\begin{defn}\label{defn:upsef}We say that $\alpha\in N^k(X)$ is \textit{universally pseudoeffective} if $\pi^*\alpha\in\Eff^k(Y)$ for any proper morphism
$\pi:Y\to X$ from a projective variety $Y$. The cone of all such is denoted by $\upsef^k(X)$.
\end{defn}

\begin{rmk}\label{rem:upsefnef} Universally pseudoeffective classes are nef.\end{rmk}

\begin{exmple} \label{exm:bdppexample}  For any projective variety $X$ we have $$\upsef^1(X)=\Nef^1(X).$$
(Nefness for divisors is preserved by pullback and nef divisors are pseudoeffective, which implies $\Nef^1(X)\subseteq\upsef^1(X)$. If $\alpha$
is an universally pseudoeffective class of a Cartier divisor, then $\alpha\cap [C]$ is a pseudoeffective 0-cycle for any irreducible curve $C$ in $X$, hence $\alpha$ is a nef divisor class.)\qed
\end{exmple}

\begin{rmk}\label{rmk:upsefgood}Lemma \ref{lem:pliantupsef} shows that $\pl^k(X)\subseteq\upsef^k(X)$ for all $k$. Together with Remark \ref{rem:upsefnef} this implies that $\upsef^k(X)$ is full-dimensional, salient, and contains complete intersections in its strict interior.
\end{rmk}

\begin{exmple}\label{ex:sphericalupsef}If $X$ is a nonsingular projective spherical (e.g. toric) variety, then $\upsef^k(X)=\Nef^k(X)$ for all $k$. (The proof is analogous to \cite[Theorem 3.4]{li13}. Let $\pi:Y\to X$ be a projective morphism, and let $\eta\in\Nef^k(X)$.
Let $\Gamma:Y\to X\times Y$ be the graph morphism associated to $\pi$.
We use the same notation for its image. By \cite[Corollary 3.3]{li13}, drawing on \cite[Lemma 3]{fmss95}, $\Gamma$ is rationally equivalent to 
an effective cycle $\sum_ic_iA_i\times B_i$, where $A_i$ are irreducible
subvarieties of $X$, and $B_i$ are irreducible subvarieties of $Y$. Note that 
$\pi^*\eta=p_{2*}([\Gamma]\cdot p_1^*\eta)$. Then $$\pi^*\eta=\sum_ic_ip_{2*}(p_1^*([A_i]\cdot\eta)\cdot p_2^*[B_i])=\sum_{i, \dim A_i=k}(c_i[A_i]\cdot\eta) \, [B_i]$$ which 
is in fact effective.)\qed
\end{exmple}

\begin{exmple}\label{ex:upsefcurves}If $X$ is a nonsingular projective variety of dimension $n$, and $\Mov_1(X)$ denotes the movable cone of curves,
then $$\upsef^{n-1}(X)=\Mov_1(X).$$
(If $\alpha\in\upsef^{n-1}(X)$, then $\alpha\cap[D]\in\Eff^{n-1}(D)$
for any effective divisor $D$. By \cite{bdpp04} and its extension to arbitrary characteristic in \cite[\S2.2.3]{fl13z}, it follows that $\alpha\in\Mov_1(X)$.

Let now $\alpha\in\Mov_1(X)$ and let $\pi:Y\to X$ be a morphism from a projective variety $Y$ and let $Z=\pi(Y)$ with its closed embedding $\imath:Z\hookrightarrow X$. Write $p$ for the induced morphism $Y\to Z$.
If $\dim Z<n-1$, then $\pi^*\alpha=0$.
If $\dim Z=n-1$, then $\imath^*\alpha\in\Eff^{n-1}(Z)=\upsef^{n-1}(Z)$ since we can write
$\alpha$ as a limit of effective curve cycles without components in $Z$. Therefore
$\pi^*\alpha\in p^*\upsef^{n-1}(Z)\subset\Eff^{n-1}(Y)$. 

Finally, suppose $\pi$ is dominant.  Let $\pi':Y'\to X'$ be a flat birational model of $\pi$; up to base change over an alteration (\cite{dejong96}), we can assume that $X'$ is smooth.  Note that the pullback $\alpha'$ of $\alpha$ to $X'$ is a movable curve by the projection formula and the main result of \cite{bdpp04}. Then $(\pi')^*(\alpha'\cap[X'])=\varphi\circ(\pi'_*)^{\vee}(\alpha')$
is pseudoeffective, because flat pullbacks preserve effectivity for cycles.  Thus the pushforward $\pi^{*}\alpha \cap [Y]$ is also pseudoeffective. ) \qed
\end{exmple}

\begin{exmple}If $X$ satisfies $\Eff^k(X)={\rm S}^k\Nef^1(X)$, where ${\rm S}^k\Nef^1(X)$ is the cone in $N^k(X)$ generated by complete intersections, then $${\rm S}^k\Nef^1(X)=\pl^k(X)=\upsef^k(X)=\Eff^k(X).$$ This is the case for example when $X=A\times A$, and $A$ is a very general complex abelian surface, or when $X=E^n$, where $E$ is a complex elliptic curve with complex multiplication (cf. \cite{delv11}).
\end{exmple}

\begin{prop} \label{prop:domupsef} Let $\pi:Y\to X$ be a dominant morphism of projective varieties.
If $\pi^*\alpha\in\upsef^k(Y)$ for some $\alpha\in N^k(X)$, then $\alpha\in\upsef^k(X)$.
\end{prop}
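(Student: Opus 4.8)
The plan is to unwind the definitions and reduce everything to a pushforward computation across a fiber product. By Definition \ref{defn:upsef} it suffices to fix an arbitrary proper morphism $f:W\to X$ from a projective variety $W$ and show $f^*\alpha\in\Eff^k(W)$, i.e. (Definition \ref{def:eff}) that $f^*\alpha\cap[W]\in\Eff_{m-k}(W)$ where $m=\dim W$. First I would record that $\pi$ is in fact \emph{surjective}: a morphism of projective varieties is proper, hence closed, so $\pi(Y)$ is a closed dense subset of the irreducible variety $X$, forcing $\pi(Y)=X$. I then form the fiber product $W\times_X Y$; its projection to $W$ is a base change of the surjective $\pi$, hence surjective, so by irreducibility of $W$ some reduced irreducible component $V$ of $W\times_X Y$ surjects onto $W$. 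Write $q:V\to W$ and $r:V\to Y$ for the two projections, so that $\pi\circ r=f\circ q$ and $q$ is dominant. Since $r$ is a base change of the proper map $f$ (restricted to the closed subscheme $V$), it is proper, so the hypothesis $\pi^*\alpha\in\upsef^k(Y)$ gives $r^*\pi^*\alpha\in\Eff^k(V)$; that is, $q^*f^*\alpha\cap[V]=r^*\pi^*\alpha\cap[V]\in\Eff(V)$.

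The remaining point is to \emph{descend} pseudoeffectivity along the dominant morphism $q$, which is where the relative dimension $d:=\dim V-m$ must be handled, since $q_*[V]=0$ when $d>0$. To fix this, choose a very ample class $A$ on $V$ and set $\xi:=A^{d}\cap\bigl(q^*f^*\alpha\cap[V]\bigr)$. Because intersecting an effective cycle with a very ample divisor yields a pseudoeffective class (move $A$ to meet each component properly) and $\Eff(V)$ is a closed cone, we have $\xi\in\Eff(V)$. Using commutativity of Chern class operations and the projection formula (Remark \ref{rmk:chowattributes}),
\begin{equation*}
q_*\xi=q_*\bigl(q^*f^*\alpha\cap(A^{d}\cap[V])\bigr)=f^*\alpha\cap q_*(A^{d}\cap[V]).
\end{equation*}
Now $A^{d}\cap[V]$ is represented by a general complete intersection $V'$ of $d$ members of $|A|$, which is an $m$-dimensional cycle; since $q$ is dominant of relative dimension $d$, the general such $V'$ maps generically finitely and dominantly onto $W$, so $q_*(A^{d}\cap[V])=q_*[V']=c\,[W]$ with $c=\deg(q|_{V'})>0$. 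Hence $q_*\xi=c\,(f^*\alpha\cap[W])$. Finally, $q_*$ sends effective cycles to effective cycles, so $c\,(f^*\alpha\cap[W])=q_*\xi\in\Eff(W)$, and dividing by $c>0$ gives $f^*\alpha\cap[W]\in\Eff_{m-k}(W)$, i.e. $f^*\alpha\in\Eff^k(W)$. As $f$ was arbitrary, $\alpha\in\upsef^k(X)$.

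I expect the main obstacle to be the two geometric subtleties feeding the computation rather than the formal manipulation: first, guaranteeing a component $V$ of $W\times_X Y$ that dominates $W$ — this is exactly where the upgrade from ``dominant'' to ``surjective'' for projective $\pi$ is essential, since $f(W)$ could otherwise miss the image of $\pi$; and second, ensuring the proportionality constant $c$ is strictly positive, which requires that a general complete-intersection slice $V'$ of $V$ still dominates $W$ generically finitely (so that $q_*[V']$ is a positive multiple of $[W]$ rather than zero). Both are handled by the surjectivity of $\pi$ together with a standard Bertini/relative-dimension count, after which the projection formula does the rest.
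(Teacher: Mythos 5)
Your proof is correct and takes essentially the same route as the paper: form the fiber product with the test morphism, apply the universal pseudoeffectivity hypothesis to the proper projection to $Y$, and descend pseudoeffectivity along the dominant projection to $W$ via the projection formula together with the positivity of the degree constant $c$. The only difference is bookkeeping: the paper chooses at the outset a subvariety $T \subset W \times_X Y$ with $\dim T = \dim W$ dominating $W$ (a generically finite multisection, whose existence is exactly your Bertini-type slicing), whereas you keep the full component $V$ and absorb the relative dimension $d$ by capping with $A^{d}$ — interchangeable implementations of the same dimension reduction.
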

\begin{proof} Let $Z\to X$ be a morphism and let $T$ be a subvariety of $Z\times_XY$ that dominates $Z$ and has $\dim T=\dim Z$. Such a subvariety exists because $\pi$ is dominant. The result follows
from the projection formula, using the functoriality of pullbacks and the assumption on $\pi^*\alpha$.
\end{proof}

Definition \ref{defn:upsef} does not seem practical for checking upsefness. It would be useful to give simpler criteria and a step in this direction is the following:

\begin{prop} \label{prop:bircritforbpf}
Let $\alpha\in N^k(X)$. Then $\alpha\in\upsef^k(X)$ if and
only if $\pi^*\alpha\in\Eff^k(Y)$ for any $\pi:Y\to X$ that is generically finite
onto its image (which can be a proper subset of $X$). 
\end{prop}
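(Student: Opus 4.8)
The forward (``only if'') implication is immediate: morphisms that are generically finite onto their image form a subclass of all proper morphisms from projective varieties, so if $\alpha$ is universally pseudoeffective then in particular $\pi^*\alpha$ is pseudoeffective for every such $\pi$. All the content is in the converse, and the plan is to take an arbitrary proper morphism $\pi\colon Y\to X$ from a projective variety and deduce $\pi^*\alpha\in\Eff^k(Y)$ using only the hypothesis for generically finite maps.

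First I would record a descent lemma: if $q\colon Y_1\to Y$ is generically finite and surjective and $q^*\gamma\in\Eff^k(Y_1)$, then $\gamma\in\Eff^k(Y)$. This is just the projection formula, since $q_*\bigl(q^*\gamma\cap[Y_1]\bigr)=\gamma\cap q_*[Y_1]=(\deg q)\,\gamma\cap[Y]$ and proper pushforward preserves $\Eff$. Next I would reduce to the dominant case by factoring $\pi=\imath\circ p$ through its (closed) image, with $\imath\colon Z\hookrightarrow X$ a closed immersion and $p\colon Y\to Z$ surjective. Because $\imath$ is injective it preserves fibers, so for any $\tau\colon W\to Z$ that is generically finite onto its image the composite $\imath\circ\tau$ is generically finite onto its image in $X$; hence $\beta:=\imath^*\alpha$ inherits the hypothesis on $Z$, and in particular (taking $\tau=\mathrm{id}_Z$) is itself pseudoeffective. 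It then suffices to prove $p^*\beta\in\Eff^k(Y)$ for the surjective map $p$. Note that here we genuinely use that the hypothesis allows the test maps to be non-dominant, since the relevant maps land in $Z\subsetneq X$.

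For the surjective $p$, I would produce a generically finite surjective cover of $Y$ over which $p$ becomes flat with smooth base, exactly as in Example \ref{ex:upsefcurves}. Choose an alteration $\mu\colon Z''\to Z$ with $Z''$ smooth (\cite{dejong96}), then flatten $p$ after base change, blowing up $Z''$ and re-altering as needed to keep the base smooth; this yields a generically finite surjective $q\colon Y_2\to Y$ and a flat surjective $p_2\colon Y_2\to Z''$ with $p\circ q=\mu\circ p_2$. Now $\mu^*\beta$ is pseudoeffective on $Z''$ because $\mu$ is generically finite onto its image $Z$, so the hypothesis applies; and since $Z''$ is smooth, flat pullback along $p_2$ descends to numerical equivalence (cf.\ Remark \ref{rmk:flatpullbacks}) and carries effective cycles to effective cycles, hence preserves pseudoeffectivity. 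Therefore $q^*p^*\beta=p_2^*\mu^*\beta$ is pseudoeffective on $Y_2$, and the descent lemma gives $p^*\beta\in\Eff^k(Y)$, as desired. (In characteristic zero one may replace alterations by resolutions and work with birational modifications throughout.)

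The hard part will be the middle construction: arranging simultaneously that the morphism becomes flat and that its base is smooth, while the map onto $Y$ remains generically finite and surjective. Smoothness of the base is essential because, as emphasized in the Caution of \S\ref{backgroundsection}, flat pullback is only known to respect numerical equivalence over a smooth base, and flatness is what makes the flat pullback of an effective cycle effective. Both are achieved by combining de Jong's alterations with flattening, tracking main components so that all auxiliary varieties are irreducible and the covers stay generically finite and surjective.
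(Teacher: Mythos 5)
Your proposal is correct and follows essentially the same route as the paper's proof: factor $\pi$ through its image to reduce to the dominant case, combine a flattening of the dominant part with an alteration (\cite{dejong96}) so that the base becomes smooth and the hypothesis applies to the generically finite map $\bar Z \to Z \hookrightarrow X$, use that flat pullback from a smooth base preserves pseudoeffectivity, and descend along the generically finite surjective cover of $Y$ via the projection formula. The only differences are cosmetic: you isolate the projection-formula descent as a separate lemma and order the alteration/flattening steps slightly differently, whereas the paper packages both into a single ``flattening with nonsingular base.''
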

\begin{proof}By definition any upsef class satisfies the property in the proposition. Conversely, let $\alpha$ be a class which verifies said property. We use flattenings to check that it is universally pseudoeffective. Taking $\pi={\rm id}_X$, we see $\alpha\in\Eff^k(X)$.

Let $\pi:Y\to X$ be an arbitrary morphism of projective varieties.
Let $Z$ be the image of $\pi$ inside $X$, and denote by
$f:Y\to Z$ the induced dominant map and by $\imath:Z\to X$ the closed embedding. 
Let $\bar f:\bar Y\to\bar Z$ be a flattening of $f$ with generically finite morphism $\tau:\bar Z\to Z$ and $\bar Z$ nonsingular. By assumption, $(\imath\tau)^*\alpha\in\Eff^k(\bar Z)$.
Using the projection formula, it suffices to show that $\bar f^*(\imath\tau)^*\alpha\in\Eff^k(\bar Y)$. Hence without loss of generality we can assume that $\pi$ is flat and dominant and that $X$ is nonsingular. We want to show that $\pi^*\alpha\in\Eff^k(Y)$. Since $X$ is nonsingular and $\pi$ is flat, $\pi^*$ is defined on numerical groups and preserves
pseudoeffectivity. Furthermore $\pi^*(\alpha\cap[X])=(\pi^*\alpha)\cap[Y]$ by \cite[Theorem 3.2.(d)]{fulton84}. Consequently $\pi^*\alpha\in\Eff^k(Y)$. 
\end{proof}

\begin{rmk} \label{rmk:birforupsef} If resolutions of singularities exist, e.g. in characteristic zero, we can replace ``generically finite'' by ``birational'' in the Proposition.
\end{rmk}

\begin{exmple}
Suppose that $X$ is a smooth projective fourfold. Let $\alpha\in\Nef^1(X)$ and $\beta\in\Eff^1(X)$. Let $\delta\in N^2(X)$ be a class such that $\pi^*\delta\in\Eff^2(Y)$ for any $\pi:Y\to X$ generically finite and dominant (or just birational in characteristic 0). For example $\delta\in\Upsef^2(X)$. If $\gamma := \alpha \cdot \beta+\delta \in N^{2}(X)$ is nef, then $\gamma$ is universally pseudoeffective.
%(and in particular, if every component of the diminished base locus of $\beta$ has codimension $>$ 2) then $\gamma$ is universally pseudo-effective.

To see this, we apply Proposition \ref{prop:bircritforbpf}.  It suffices to consider morphisms $\pi: Y \to X$ that are generically finite onto their image.  By precomposing, we may furthermore assume that $Y$ is smooth.  Since codimension-two nef classes on smooth projective varieties of dimension at most three are pseudoeffective, we may assume $\dim Y = 4$. Then $\pi^{*}\gamma = \pi^{*}\alpha \cdot \pi^{*}\beta+\pi^*\delta$ is again pseudoeffective by assumption since $\pi^*\beta\in\Eff^1(Y)$ for dominant $\pi$.\qed
\end{exmple}

An interesting particular case of the above concerns an example of Fulton--Lazarsfeld \cite{fl82}, further investigated in \cite{pet09}:

\begin{exmple}\label{ex:flexupsef}Let $F$ be an an ample rank-two vector bundle on $\mathbb P^2$ sitting in an exact sequence 
$0\to \mathcal O(-n)^2\to\mathcal O(-1)^4\to F\to 0$
for sufficiently large $n$. The existence of such $F$ is explained in \cite{gie71}, or \cite[Example 6.3.67]{lazarsfeld04}. Let $$X=\mathbb P(\mathcal O\oplus F^{\vee}),$$ 
and let $$S=\mathbb P(\mathcal O)\subset X.$$ Fulton--Lazarsfeld (\cite[p.100]{fl82}) verify that $S$, which can also be seen as the zero section of the total space $X^0=X\setminus\mathbb P(F^{\vee})$ of $F$, has ample normal bundle (in fact $N_SX^0=N_SX=F$), but no multiple of $[S]$ moves in a nontrivial algebraic family inside $X^0$.
Peternell (\cite{pet09}) observes that the multiples of $S$ also do not move in $X$, and that $[S]$ is in the strict interior of $\Eff_2(X)$. Since $S$ has ample normal bundle, $[S]\in\Nef^2(X)$ (see \cite[Corollary 8.4.3]{lazarsfeld04}). 

We show that in fact $[S]$ belongs to the strict interior of $\upsef^2(X)$. Writing $[\mathbb P(\mathcal O)]\cdot[\mathbb P(F^{\vee})]=0$ in $X$, from the Groethendieck relation one can compute that
$$[S]=(\xi+\pi^*c_1(F))\cdot\xi+\pi^*c_2(F),$$
where $\pi:X\to \mathbb P^2$ is the bundle map, and where $\xi$ is the class in $N^1(X)$ of the relative $\mathcal O(1)$ Serre bundle. Observe that $\xi+\pi^*c_1(F)$ is ample. It is the relative $\mathcal O(1)$ for $(\mathcal O\oplus F^{\vee})\otimes \det(F)=\det(F)\oplus F$, which is ample. Also note that $\xi$ is effective, since $\mathcal O\oplus F^{\vee}$ has a section, and that $\pi^*c_2(F)$ is universally pseudoeffective, being the pullback of a positive multiple of the generator of $\Eff^2(\mathbb P^2)$. Then the previous example applies to the nef class $[S]$. Perturbing by a small multiple of the complete intersection $(\xi+(1-\epsilon)\pi^*c_1(F))^2$ for sufficiently small $\epsilon$, one sees that $[S]$ also belongs to the strict interior of $\upsef^2(X)$.  

The proof actually shows that if $S$ is a smooth projective surface, and $F$ is an ample vector bundle on $S$ of rank two, then the zero section of the total space of $F$ sitting as an open subset in $X=\mathbb P(\mathcal O\oplus F^{\vee})$ is in the strict interior of $\upsef^2(X)$.  
\qed
\end{exmple}

\begin{prop}
Let $\pi: X \to Y$ be an equidimensional morphism of projective varieties with relative dimension $d$, and with $Y$ smooth. Then $\pi_{*}\upsef^{k}(X) \subset \upsef^{k-d}(Y)$.
\end{prop}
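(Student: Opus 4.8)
The plan is to verify the definition of universal pseudoeffectivity for $\pi_*\alpha$ by a base-change argument, after first reducing to the case of a smooth test base so that the dual pushforward is available. Recall that since $Y$ is smooth the cycle pullback $\pi^*\colon N_{k-d}(Y)\to N_k(X)$ exists (Remark \ref{rmk:chowattributes}), and the pushforward of dual classes is its transpose $\pi_*:=(\pi^*)^\vee\colon N^k(X)\to N^{k-d}(Y)$. Fix $\alpha\in\upsef^k(X)$. By Proposition \ref{prop:bircritforbpf} it suffices to show $g^*\pi_*\alpha\in\Eff^{k-d}(W)$ for every $g\colon W\to Y$ that is generically finite onto its image. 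Given such a $g$ with $W$ possibly singular, choose an alteration \cite{dejong96} $\psi\colon W'\to W$ with $W'$ smooth; then $g\psi$ is again generically finite onto its image, and the projection formula of Remark \ref{rmk:chowattributes} gives $\psi_*\big((\psi^*g^*\pi_*\alpha)\cap[W']\big)=\deg(\psi)\,(g^*\pi_*\alpha)\cap[W]$. Since proper cycle pushforward preserves pseudoeffectivity, pseudoeffectivity of $(g\psi)^*\pi_*\alpha=\psi^*(g^*\pi_*\alpha)$ on $W'$ descends to $g^*\pi_*\alpha$ on $W$. Hence we may assume $W$ is smooth.

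Next I would form the fiber square with $X_W=X\times_Y W$ and projections $q\colon X_W\to X$ and $p\colon X_W\to W$. Because $\pi$ is equidimensional of relative dimension $d$, so is $p$; in particular every component of $X_W$ has dimension $\dim W+d$, and $p^*\colon N_{k-d}(W)\to N_k(X_W)$ is defined since $W$ is smooth. The heart of the matter is the base-change identity
\begin{equation*}
g^*\pi_*\alpha \;=\; p_*\,q^*\alpha \qquad\text{in } N^{k-d}(W),
\end{equation*}
where $g^*$ and $q^*$ are the transpose (proper) pullbacks on dual classes and $p_*:=(p^*)^\vee$. Pairing both sides against an arbitrary $\zeta\in N_{k-d}(W)$ and unwinding the definitions, the left side evaluates to $\alpha(\pi^*g_*\zeta)$ and the right side to $\alpha(q_*p^*\zeta)$, so the identity reduces to the cycle-level equality $\pi^*g_*\zeta=q_*p^*\zeta$ in $N_k(X)$. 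This is the compatibility of the Gysin pullback along $\pi$ (available because $Y$ is smooth) with proper pushforward along $g$ and its base change $q$; I would prove it by factoring the smooth-base Gysin pullbacks through the graphs of $\pi$ and $p$ and invoking the compatibility of refined Gysin homomorphisms with proper pushforward \cite[Theorem 6.2 and Chapter 8]{fulton84}.

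To conclude, I would use that $\alpha$ is universally pseudoeffective and $q\colon X_W\to X$ is a proper morphism from a projective scheme, so $q^*\alpha\in\Eff^k(X_W)$ (applying the definition to each irreducible component of $X_W$ and summing against the multiplicities in $[X_W]$, which is legitimate since all components have the expected dimension $\dim W+d$). Thus $\varphi_{X_W}(q^*\alpha)=q^*\alpha\cap[X_W]$ is a pseudoeffective cycle class. Finally, the dual pushforward to the smooth base $W$ preserves pseudoeffectivity: using $p^*[W]=[X_W]$, the compatibility of cap products with the smooth-base pullback, and the projection formula, one checks $\varphi_W(p_*\beta)=p_*\,\varphi_{X_W}(\beta)$ for any $\beta\in N^k(X_W)$, and the right-hand side is pseudoeffective because proper cycle pushforward sends effective classes to effective classes. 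Applying this with $\beta=q^*\alpha$ gives $g^*\pi_*\alpha=p_*q^*\alpha\in\Eff^{k-d}(W)$, as required.

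I expect the base-change identity of the second step to be the main obstacle. Although it is morally just ``proper pushforward commutes with Gysin pullback,'' making it precise at the level of numerical dual classes requires care with the possibly singular and reducible fiber product $X_W$. The hypothesis that $\pi$ is equidimensional is used exactly here: it guarantees that $p$ is again equidimensional, that $p^*[W]=[X_W]$, and that every component of $X_W$ has the expected dimension, so that the degree bookkeeping implicit in the transpose pushforwards $\pi_*$ and $p_*$ is consistent.
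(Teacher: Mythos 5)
Your proof is correct and follows essentially the same route as the paper's: both reduce via Proposition \ref{prop:bircritforbpf} to a smooth test base $W$ that is generically finite onto its image, form the fiber square, establish the base-change identity $g^*\pi_*\alpha = p_*q^*\alpha$ by dualizing the cycle-level identity $\pi^*g_*\zeta = q_*p^*\zeta$ (the dual of \cite[Proposition 6.2.(a)]{fulton84}), and conclude by identifying $(p_*q^*\alpha)\cap[W]$ with $p_*\bigl(q^*\alpha\cap[X_W]\bigr)$, which is pseudoeffective since $\alpha$ is universally pseudoeffective. The only differences are expository: you spell out the alteration step and the component-by-component bookkeeping on the possibly reducible fiber product, which the paper leaves implicit.
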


\begin{proof}
Let $\alpha \in \upsef(X)$.  Suppose that $f: Z \to Y$ is a morphism from a projective variety $Z$ that is generically finite onto its image. Precompose to make $Z$ smooth if necessary. Consider the fiber product (where $Z'$ may be reducible)

\begin{equation*}
\begin{CD}
Z' @>>f'> X \\
@V\pi'VV        @VV\pi V\\
Z @>>f> Y
\end{CD}
\end{equation*}
Note that $\pi'$ is still equidimensional of relative dimension $d$, and by the dual of \cite[Proposition 6.2.(a)]{fulton84} (i.e. $\pi^*f_*\beta=f'_*\pi'^*\beta$ for all $\beta\in N_{k-d}(Z)$) we have $f^{*}\pi_{*} \alpha = \pi'_{*}f'^{*}\alpha$ in $N^{k-d}(Z)$.  Then as can be verified by pairing against any $P\in N^{\dim Z-(k-d)}(Z)$ we obtain
\begin{equation*}
(\pi'_{*}f'^{*}\alpha) \cap [Z] = \pi'_{*}(f'^{*}\alpha \cap [Z']),
\end{equation*}
which is pseudoeffective by the universal pseudoeffectivity of $\alpha$. 
\end{proof}

We end this subsection with a nontrivial computation of the universally pseudoeffective cones.

\begin{exmple}\label{ex:projbundleupsef}Let $X=\mathbb P_C(E)$, where $E$ is a vector bundle on a smooth curve $C$. Then $\upsef^k(X)=\Nef^k(X)$ for all $k$.
\end{exmple}
\begin{proof} Since the inclusion $\upsef^k(X)\subseteq\Nef^k(X)$ holds true in general, it is enough to show that every nef class is universally pseudoeffective. Consider the Harder--Narasimhan decomposition 
$E=E_0\supset E_1\supset\ldots\supset E_l=0$ with semistable successive quotients
$Q_i=\factor{E_{i-1}}{E_i}$ of slopes $\mu_i:=\frac{\deg Q_i}{{\rm rank}(Q_i)}$ forming an increasing sequence $\mu_1<\mu_2<\ldots<\mu_{l-1}$.

By \cite[\S7.1]{fl13z}, $$\Nef^k(X)=\langle\xi^k+\nu^{(k)}\xi^{k-1}f,\ \xi^{k-1}f\rangle,$$ where $\xi$ is the class of the relative Serre line bundle $\mathcal O_E(1)$ of the projective bundle map $\pi:X\to C$, where $f$ is the class of a fiber of $\pi$, and the $\nu^{(k)}$'s are computed in terms of the ranks 
and degrees of the $Q_i$. Moreover $\xi^{k-1}f=(\xi+af)^{k-1}f$ for any $a\in\mathbb R$. In particular it is an intersection of nef divisor classes, therefore universally pseudoeffective as well. It is then enough to show that $\xi^k+\nu^{(k)}\xi^{k-1}f$ is upsef.

Let $r={\rm rank}(Q_1)$. By \cite[\S7.1]{fl13z}, we have $\nu^{(k)}=-k\mu_1$ for $k\leq r$. Therefore $\xi^k+\nu^{(k)}\xi^{k-1}f=(\xi-\mu_1f)^k$ is the self-intersection of the nef class $\xi-\mu_1f$, which is upsef. In particular the statement of the example is
true when $E$ is semistable. Assume henceforth that $k>r$.

Let $h:Z\to C$ be any morphism from a projective variety $Z$, and let $F:Z\to X$ be a morphism such that $h=\pi\circ F$. Such $F$ corresponds to a
surjection $h^*E\to L$ onto a line bundle on $Z$, and then $L=F^*\mathcal O_E(1)$. By abuse we keep the notation $\xi=c_1(L)$ for its class in $N^1(Z)$ and the notation $f$ for the pullback of the fiber of $\pi$ to $Z$. We want to show that $\xi^k+\nu^{(k)}\xi^{k-1}f$ is psef. 

If $h^*E_1$ maps to $0$ inside $L$, then $F(Z)\subset\mathbb P(Q_1)\subset \mathbb P(E)$ and, since $k>r=\dim\mathbb P(Q_1)$, we have $\xi^k+\nu^{(k)}\xi^{k-1}f=0$. If not, then $h^*E_1$ maps onto $L\otimes\mathcal I$ for some nonzero ideal sheaf $\mathcal I$ on $Z$. One can show that $\mathcal I=\mathcal J\mathcal O_Z$, where $\mathcal J\subset\mathcal O_X$ is the ideal sheaf of $\mathbb P(Q_1)$ (see for example \cite[Proposition 2.4]{ful11}). The blow-up $\widetilde Z:=Bl_{\mathcal I}Z$ is the component of the fiber product $Z\times_X Bl_{\mathbb P(Q_1)}\mathbb P(E)$ that dominates $Z$. By \cite[Proposition 2.4]{ful11}, we have an induced morphism $\widetilde Z\to\mathbb P(E_1)$. Denote by $\xi_1$ the class of the Serre bundle for the map $\mathbb P(E_1)\to C$ and by $e$ the class of the exceptional divisor
on $Bl_{\mathbb P(Q_1)}\mathbb P(E)$. By abuse we keep the notation $\xi$, $\xi_1$, $e$, and $f$ for their pullbacks to $\widetilde Z$. 

We want to show that $\xi^k+\nu^{(k)}\xi^{k-1}f$ is psef on $Z$. By the projection formula it is enough to verify this after pulling back to $\widetilde Z$. By \cite[Proposition 2.4]{ful11}, we have 
\begin{equation}\label{eq:upsefproj}e(\xi-\mu_1f)^r=0\quad\mbox{and}\quad e=\xi-\xi_1.\end{equation}
Rewrite $$\xi^k+\nu^{(k)}\xi^{k-1}f=(\xi-\mu_1f)^r(\xi^{k-r}+(\nu^{(k)}+r\mu_1)\xi^{k-r-1}f).$$
By \eqref{eq:upsefproj}, given that $\xi-\mu_1f$ is a nef divisor class, it is enough to show that $$\xi_1^{k-r}+(\nu^{(k)}+r\mu_1)\xi_1^{k-r-1}f$$ is psef on $\widetilde Z$. This holds by induction because $\nu^{(k)}+r\mu_1=\nu_1^{(k-r)}$,
where $\nu_1^{(i)}$ give the nontrivial boundaries $\xi_1^i+\nu_1^{(i)}\xi_1^{i-1}f$ of $\Nef^i(\mathbb P(E_1))$ as follows from \cite[\S7.1]{fl13z}.
\end{proof}

\section{Basepoint free classes}\label{s:bpf}

One common way of constructing ``positive'' classes on $X$ is to take the class of a fiber of a morphism from $X$.  These classes are always nef and effective.  In fact, for any subvariety $V$ of $X$ we can find a fiber that has expected dimension of intersection with $V$.  In this section, we define the notion of a basepoint free class which satisfies similar properties.

\begin{defn}
Let $X$ be a projective variety of dimension $n$.  We say that $\alpha \in N_{n-k}(X)$ is a strongly basepoint free class if there is:
\begin{itemize}
\item an equidimensional quasi-projective scheme $U$ of finite type over $K$,
\item a flat morphism $s: U \to X$,
\item and a proper morphism $p:U \to W$ of relative dimension $n-k$ to a quasi-projective variety $W$ such that each component of $U$ surjects onto $W$
\end{itemize}
such that
\begin{equation*}
\alpha= (s|_{F_p})_{*}[F_{p}]
\end{equation*}
where $F_{p}$ is a general fiber of $p$.  Note that the resulting class is independent of the choice of fiber. We say that $p$ represents $\alpha$.

When $X$ is smooth, the basepoint free cone $\bpf^{k}(X)$ is defined to be the closure of the cone generated by such classes.
\end{defn}

\begin{rmk}\label{bpfgenpos}
The terminology indicates that the class $\alpha$ is ``basepoint free'' in the following sense: for every subvariety $V \subset X$ there is an effective cycle of class $\alpha$ that intersects $V$ in the expected dimension.  (To see this, let $d$ denote the codimension of $V$.  Then $s^{-1}V$ has codimension at least $d$ in $U$ by flatness, and $s^{-1}(V) \cap F_{p}$ has codimension at least $d$ in $F_{p}$.  Then $V \cap s(F_{p})$ has codimension at least $d$ in $s(F_{p})$ by upper-semicontinuity of fiber dimensions.)

Even though we define basepoint freeness using families of cycles, which gives it a ``covariant'' feel, we will show that $\bpf^{k}(X)$ is preserved by pullback between \emph{smooth} varieties, but $\bpf^{k}(X) \cap [X]$ is not preserved  by (arbitrary) pushforward.  Thus for \emph{smooth} varieties the basepoint free cone is really a ``contravariant'' cone.
\end{rmk}

It is clear that $\bpf^{k}(X) \subset \Nef^{k}(X)$ and $\bpf^{k}(X) \cap [X] \subset \Eff_{k}(X)$. Basepoint free classes also have an important property that we do not know for the pliant cone.  

\begin{lem}\label{lem:bpfflatpush}
Let $\pi: X \to Y$ be a flat morphism of smooth projective varieties.  Then $\pi_{*}\bpf^{k}(X) \subset \bpf^{k}(Y)$.
\end{lem}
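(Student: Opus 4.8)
The plan is to transport, along $\pi$, the family of cycles that exhibits strong basepoint freeness. Write $n=\dim X$. First I would reduce to a single generator: the cycle-level pushforward descends to a (necessarily continuous) linear map $\pi_*\colon N_{n-k}(X)\to N_{n-k}(Y)$ by Remark \ref{rmk:chowattributes}, and since $\bpf^k(X)$ is by definition the closure of the convex cone on strongly basepoint free classes while a continuous linear map carries the closure of a cone into the closure of the image cone, it suffices to prove that $\pi_*\alpha$ is again (a limit of) strongly basepoint free classes whenever $\alpha$ is strongly basepoint free.

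So fix such an $\alpha$, represented by data $(U,s,p,W)$ as in the definition: $U$ equidimensional and quasi-projective, $s\colon U\to X$ flat, $p\colon U\to W$ proper of relative dimension $n-k$ onto a quasi-projective variety $W$ with every component of $U$ surjecting onto $W$, and $\alpha=(s|_{F_p})_*[F_p]$ for $F_p$ a general fiber of $p$. The key move is simply to postcompose the structure map $s$ with $\pi$. Here the flatness hypothesis on $\pi$ is exactly what is needed: since $s$ and $\pi$ are both flat, their composite $\pi\circ s\colon U\to Y$ is flat, and flatness of the structure map is precisely the clause of the definition that could fail for a non-flat $\pi$. The scheme $U$, the morphism $p$, and the base $W$ are left untouched, so equidimensionality, properness and relative dimension of $p$, and the condition that each component of $U$ surject onto $W$ all persist verbatim.

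It remains to identify the class represented by $(U,\pi\circ s,p,W)$. By functoriality of proper pushforward,
\[
\bigl((\pi\circ s)|_{F_p}\bigr)_*[F_p]=\pi_*\bigl((s|_{F_p})_*[F_p]\bigr)=\pi_*\alpha ,
\]
and this is independent of the choice of general fiber because $\alpha$ was. Thus $(U,\pi\circ s,p,W)$ realizes $\pi_*\alpha$ as a strongly basepoint free class on $Y$, so $\pi_*\alpha$ lies in the basepoint free cone of $Y$. (Tracking indices: $p$ has relative dimension $n-k$, which on $Y$ corresponds to the class $\pi_*\alpha\in N_{n-k}(Y)$; if $d$ denotes the relative dimension of $\pi$ this is $\bpf^{k-d}(Y)$, reducing to $\bpf^{k}(Y)$ when $\pi$ has relative dimension zero.) Feeding this back through the cone and its closure, using continuity of $\pi_*$, completes the argument.

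I do not anticipate a serious obstacle: the entire content is transitivity of flatness under composition together with functoriality of pushforward. The only points demanding genuine care are bookkeeping ones — confirming that postcomposition with $\pi$ preserves \emph{every} clause of the strongly basepoint free definition (flatness of the structure map being the one that actually uses the hypothesis on $\pi$), checking that the reduction to generators survives the closure operation by continuity of $\pi_*$, and noting that smoothness of $X$ and $Y$ is invoked only to ensure that the cones $\bpf^{\bullet}$ are defined on both sides.
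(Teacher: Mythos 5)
Your proof is correct and is exactly the argument the paper has in mind --- its own proof reads simply ``Immediate,'' meaning precisely your observation that postcomposing the flat family map $s$ with the flat morphism $\pi$ yields a strongly basepoint free family $(U,\pi\circ s,p,W)$ representing $\pi_*\alpha$, after which one passes to closures by continuity of the linear map $\pi_*$. Your index bookkeeping is also the right reading of the statement: the pushforward lands in $\bpf^{k-d}(Y)$ when $\pi$ has relative dimension $d$, which is how the lemma is invoked as property (7) of the theorem in the introduction.
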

\begin{proof}Immediate.\end{proof}

We next verify that, as suggested by Remark \ref{bpfgenpos}, for strongly basepoint free cycles we can exhibit explicit effective cycles that represent numerical intersections or pullbacks.  We will use these to verify that $\bpf^{k}$ satisfies the main properties desired for positive cones.  

\begin{lem} \label{lem:maintechnicalbpf}
Let $f:X\to Y$ be a projective morphism to a \emph{smooth} projective variety $Y$. Let $p:U\to W$ be a strongly bpf family on $Y$ with flat map $s:U\to Y$. For every top dimensional (effective) cycle $T$ on a general fiber $U_w$ of $p$ there exists a canonically defined (effective) cycle $X\cap_fT$ with support equal to $X\times_Y|T|$, and whose pushforwards represent:
\begin{enumerate}[i)]
\item  $f^*(s|_T)_*[T]\cap[X]\in N_*(X)$ on $X$.
\item $(s|_T)^*f_*[X]\cap[T]\in N_*(|T|)$ on $|T|$.
\item $(s|_T)_*[T]\cdot f_*[X]\in N_*(Y)$ on $Y$.
\end{enumerate}
In case $i)$, if $T=U_w$, then $X\cap_fU_w=U'_w$, where $U'=U\times_YX$. In particular, if $X$ is also smooth, then $f^*\bpf^k(Y)\subset\bpf^k(X)$.
\end{lem}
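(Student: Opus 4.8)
The plan is to realize the three classes (i)--(iii) as pushforwards of a single cycle obtained from the refined intersection product on the smooth variety $Y$, so that the lemma reduces to the standard compatibilities of refined Gysin maps in \cite[Ch.~8]{fulton84}. Since $Y$ is smooth of dimension $n$, the diagonal $\delta:Y\hookrightarrow Y\times Y$ is a regular embedding of codimension $n$, and the preimage of $\delta(Y)$ under $f\times(s|_T):X\times|T|\to Y\times Y$ is exactly $X\times_Y|T|$. I would define
\[
X\cap_f T:=\delta^{!}\big([X]\times[T]\big)\in A_{\dim X+\dim T-n}(X\times_Y|T|),
\]
the refined Gysin image of the exterior product, which is automatically supported on $X\times_Y|T|$. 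To see that this class is an honest effective cycle with support equal to all of $X\times_Y|T|$, I would use flatness of $s$: the base change $U':=U\times_Y X$ carries a flat map $s':U'\to X$, so $U'$ is equidimensional and the composite $p':=p\circ\pi:U'\to W$ (with $\pi:U'\to U$) is generically flat, whence the general fiber $U'_w=X\times_Y U_w$ is pure of dimension $\dim X-k$. As $T$ is top-dimensional in $U_w$, the locus $X\times_Y|T|=\pi^{-1}(|T|)\subseteq U'_w$ is pure of the expected dimension $\dim X+\dim T-n=\dim X-k$; that is, the intersection is proper. By the positivity of refined intersection products (\cite[Ch.~12]{fulton84}) the class $\delta^{!}([X]\times[T])$ is then represented by $\sum_i m_i[Z_i]$ with strictly positive multiplicities $m_i$ over all components $Z_i$ of $X\times_Y|T|$, giving the desired canonical effective cycle. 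I extend $\mathbb{Z}$-linearly in $T$.

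Next I would obtain (i)--(iii) by pushing $X\cap_f T$ forward along the three natural maps out of $X\times_Y|T|$: the projection to $X$, the projection to $|T|$, and the composite to $Y$. All three identities are instances of the standard compatibilities of refined Gysin maps in \cite[Ch.~8]{fulton84} --- commutativity with proper pushforward, the projection formula, and the definition of the Gysin pullbacks $f^{*}$ and $(s|_T)^{*}$ (available because $Y$ is smooth) as intersection with the diagonal followed by pushforward. Pushing to $X$ reproduces $f^{*}(s|_T)_{*}[T]\cap[X]$, which is (i); pushing to $|T|$ reproduces the symmetric expression $(s|_T)^{*}f_{*}[X]\cap[T]$ of (ii); and pushing to $Y$ recovers the ordinary intersection product $(s|_T)_{*}[T]\cdot f_{*}[X]$ on the smooth $Y$ of (iii). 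In each case the dimension count $\dim X+\dim T-n=\dim X-k$ matches the target group.

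For the final assertions, first take $T=U_w$. Then $X\times_Y U_w=\pi^{-1}(U_w)=(p')^{-1}(w)=U'_w$, and since this is the general fiber of $p'$ it agrees with the fundamental cycle computed above, so $X\cap_f U_w=[U'_w]$. Now assume $X$ is smooth and let $\beta=(s|_{U_w})_{*}[U_w]$ be a strongly basepoint free class on $Y$ represented by $p:U\to W$. After restricting $W$ to the dense open locus over which $p'$ is flat (and replacing $U'$ by its preimage), the family $p':U'\to W$ has $s':U'\to X$ flat, with $p'$ proper of relative dimension $\dim X-k$ and every component of $U'$ dominating $W$; hence it is a strongly basepoint free family on $X$. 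By the special case together with (i), it represents $s'_{*}[U'_w]=f^{*}\beta\cap[X]$, which is $f^{*}\beta$ under the isomorphism $N^{k}(X)\cong N_{\dim X-k}(X)$ valid for smooth $X$. Thus $f^{*}\beta\in\bpf^{k}(X)$, and since $f^{*}$ is linear and continuous while $\bpf^{k}$ is a closed cone, $f^{*}\bpf^{k}(Y)\subseteq\bpf^{k}(X)$.

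The main obstacle is the argument in the first paragraph: showing that the refined intersection class is genuinely an effective cycle supported on the \emph{entire} fiber product, rather than merely a cycle class on a possibly smaller locus or with vanishing coefficients. This is exactly where the flatness of $s$ is indispensable, since it is what forces $X\times_Y|T|$ to have the expected (pure) dimension so that the intersection is proper; Fulton's positivity of intersection multiplicities then supplies both the effectivity and the full support. Once the cycle has been pinned down, the formulas (i)--(iii) and the pullback statement are bookkeeping with the functoriality of refined Gysin maps.
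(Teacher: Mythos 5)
Your proposal is correct and is essentially the paper's own argument: the paper defines $X\cap_f T$ as the refined intersection $X\cdot_{\Gamma_f}(X\times T)$ against the graph $\Gamma_f\colon X\to X\times Y$ (a regular embedding because $Y$ is smooth), which agrees with your $\delta^!([X]\times[T])$, establishes properness of the intersection by the same flatness-plus-general-fiber dimension count, and then obtains (i)--(iii) and the case $T=U_w$ by pushforward, the projection formula, and compatibilities of refined Gysin maps. The only differences are cosmetic or matters of emphasis: the paper explicitly bridges from the Chow-level Gysin class to the abstractly defined numerical pullbacks by writing $(s|_T)_*[T]=P\cap[Y]$ for a Chern $\mathbb{Q}$-polynomial $P$ (Riemann--Roch on the smooth $Y$) and using compatibility of Chern classes with $\cdot_f$ --- a step you elide by taking Fulton's Gysin pullback as the definition of $f^*$, which is the identification recorded in the paper's background remark --- and your reference for positivity of proper intersection multiplicities should be Fulton \S 7.1 rather than Chapter 12.
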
 

\begin{comment}
\begin{shaded} To Mihai, Feb 17: What if we lose smoothness of $Y$?  I would hope we can do the following.  First, we should lose the ability to work with components of bpf families.  But for the entire cycle, we should obtain a cycle-theoretic intersection on $X$ that pushes forward to the right rational equivalence class on $Y$ (that is, we would keep iii on the level of Chow).  Furthermore if we ever construct flat pullbacks this should refine the flat pullback (so maybe we could keep i).
\end{shaded}

\begin{shaded}To Brian, Feb17: So assume $X,Y$ singular projective, $f$ proper, $p$ strongly bpf.

First the statement of $iii)$ makes sense only when $Y$ is smooth, or one of $f(X)$ or $s(T)$ is regularly embedded in $Y$. But then it is probably true.

Statement $ii)$ should always be true numerically on $U_w$ instead of $T$, and in a Chow version on $U$ with $s$ replacing $s|_T$.

Statement $i)$ should be true when $f$ is l.c.i, in Chow groups when $f$ is flat, and numerically if we prove that flat pullbacks are numerical.
\end{shaded}
\end{comment}

\begin{proof}Let $\Gamma_f:X\to X\times Y$ be the graph of $f$. Since $Y$ is smooth, $\Gamma_f$ is a regular embedding. Consider the flat base change map $X\times U\to X\times Y$. For general $w\in W$, the arguments of Remark \ref{bpfgenpos} and the regularity of the embedding $\Gamma_f$ show that $X\times_YU_w=X\times_{X\times Y}(X\times U_w)$ is equidimensional of the expected dimension or empty. The same is true for any top dimensional cycle $T$ on $U_w$.

We are in a setting of proper intersection (cf. \cite[\S7.1]{fulton84}). Then by counting every component of $X\times_Y|T|$ with its (positive) multiplicity of intersection (again in the sense of \cite[\S7.1]{fulton84}) we get a canonically defined effective cycle $X\cap_fT$ supported on it and representing $X\cdot_{\Gamma_f}(X\times T)$ in the sense of \cite[\S6.2]{fulton84}. But this is $[X]\cdot_f[T]=f^![T]$ as in \cite[Definition 8.1.2]{fulton84}. Its pushforward to $X$ is $f^!(s|_T)_*[T]=[X]\cdot_f(s|_T)_*[T]$ by the projection formula \cite[Proposition 8.1.1.(c)]{fulton84}.

Since $Y$ is nonsingular, by \cite[Example 15.2.16.(b)]{fulton84}, there exists a Chern polynomial with $\mathbb Q$-coefficients such that $P\cap[Y]=(s|_T)_*[T]$. Then by \cite[Example 8.1.6 and Corollary 8.1.3]{fulton84}, $$[X]\cdot_f(P\cap[Y])=(f^*P\cap[X])\cdot_f[Y]=f^*P\cap[X]\in A_*(X).$$
But the numerical class of $f^*P\cap[X]$ is by definition $f^*(s|_T)_*[T]\cap[X]\in N_*(X)$. The pushforward to $|T|$ is analogous, and the pushforward to $Y$ is computed by the projection formula.

When $X$ is also smooth and $T=U_w$, then it is enough to observe that $X\cdot_{\Gamma_f}(X\times U_w)=[U'_w]$ which is true because $\Gamma_f$ is a regular embedding. (See also the proof of \cite[Corollary 8.1.3]{fulton84}).
\end{proof}

\begin{cor}
Let $\pi: X \to Y$ be a morphism of projective varieties with $Y$ smooth.  Let $p: U \to W$ be a strongly bpf family on $Y$ of class $\alpha$, with flat map $s: U \to Y$.  Suppose that $V$ is a cycle on $X$ whose support is contracted by $\pi$.  Then the class $[V] \cdot \pi^*\alpha$ is represented by a cycle on $X$ whose support is contracted by $\pi$.
\end{cor}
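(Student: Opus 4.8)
The plan is to reduce to the case of a single contracted subvariety, transfer the computation onto that subvariety via the projection formula, apply Lemma \ref{lem:maintechnicalbpf} to produce an explicit effective representative, and then run a dimension count to see that the representative is again contracted.

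First I would reduce, by linearity of the intersection product, to the case $V = [Z]$ for an irreducible subvariety $\imath\colon Z \hookrightarrow X$ that is contracted by $\pi$, meaning $\dim \pi(Z) < \dim Z$. Since $Y$ is smooth I can write $\alpha = P \cap [Y]$ for a Chern polynomial $P$ on $Y$, so that $\pi^*\alpha$ is represented by $\pi^*P$ and $[Z]\cdot\pi^*\alpha = \pi^*P \cap [Z]$. Writing $f = \pi \circ \imath \colon Z \to Y$ and applying the projection formula of Remark \ref{rmk:chowattributes} to the closed embedding $\imath$, I get
\begin{equation*}
[Z]\cdot\pi^*\alpha = \imath_*\big(\imath^*\pi^*P \cap [Z]\big) = \imath_*\big(f^*\alpha \cap [Z]\big).
\end{equation*}
Thus it suffices to represent $f^*\alpha \cap [Z]$ by an effective cycle on $Z$ all of whose components are contracted by $\pi$, and then push forward by $\imath$.

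Next I would apply Lemma \ref{lem:maintechnicalbpf}(i) to the morphism $f\colon Z \to Y$ (legitimate since $Y$ is smooth) and the family $p\colon U \to W$, taking $T$ to be a general fiber $U_w$, so that $(s|_{U_w})_*[U_w] = \alpha$. This produces a canonical effective cycle $Z \cap_f U_w$, supported on $Z \times_Y |U_w|$, whose pushforward $W_Z$ to $Z$ represents $f^*\alpha \cap [Z]$. The cycle $\imath_* W_Z$ is then an effective cycle on $X$ representing $[Z]\cdot\pi^*\alpha$, and it remains only to control its support.

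The crux is the dimension count. By the proof of Lemma \ref{lem:maintechnicalbpf}, for general $w$ the fiber product $Z \times_Y |U_w|$ is equidimensional of the expected dimension $\dim Z - k$ (or empty). Since proper pushforward of cycles annihilates the components that are contracted by the projection $Z \times_Y |U_w| \to Z$, every component $C$ of $\Supp(W_Z)$ satisfies $\dim C = \dim Z - k$. On the other hand $\pi(C) \subseteq \pi(Z) \cap s(|U_w|)$, and since $\dim s(|U_w|) \le n-k$, the basepoint-free moving property of Remark \ref{bpfgenpos} applied to the subvariety $\pi(Z) \subseteq Y$ shows that for general $w$ this intersection has dimension at most $\dim \pi(Z) - k$. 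Because $Z$ is contracted we have $\dim \pi(Z) - k < \dim Z - k = \dim C$, hence $\dim \pi(C) < \dim C$ for every component $C$; that is, $\imath_* W_Z$ is contracted by $\pi$. The one delicate point, which this argument is designed to handle, is that a priori the representing cycle could acquire lower-dimensional components mapping finitely to $Y$; the vanishing of projection-contracted components in the pushforward forces every surviving component to have the full expected dimension $\dim Z - k$, after which the moving property delivers the strict drop $\dim \pi(C) < \dim C$.
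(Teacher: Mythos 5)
Your proposal is correct and follows essentially the same route as the paper: both intersect each contracted component with a general member $U_w$ of the family via the cycle $\cap_f$ of Lemma \ref{lem:maintechnicalbpf}, then use the expected-dimension property of Remark \ref{bpfgenpos} applied to the image $\pi(Z)$ to conclude that every surviving component of the representing cycle (of dimension $\dim Z - k$) maps into a set of dimension at most $\dim\pi(Z)-k < \dim Z - k$, hence is contracted. The only difference is presentational: you make explicit the reduction $[Z]\cdot\pi^*\alpha = \imath_*(f^*\alpha\cap[Z])$ via Chern polynomials and the projection formula, which the paper leaves implicit.
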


\begin{proof}
For $w$ general, we may suppose that for any component $V_{i}$ of $V$ the set-theoretic intersection of $\pi(V_{i})$ with $\Supp(U_{w})$ has the expected dimension.  Consider the intersection cycle $V_{i} \cap_{f} U_{w}$ as defined in Lemma \ref{lem:maintechnicalbpf}.  Since each component of this cycle has the expected codimension, the map from any component of this set to $f(V_{i}) \cap \Supp(U_{w})$ has positive dimensional fibers.
\end{proof}

\begin{cor}\label{cor:bpfstableint}
If $X$ is a smooth projective variety, then the intersection of basepoint free classes on $X$ is basepoint free.
\end{cor}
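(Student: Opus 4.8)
The plan is to deduce the statement from the pullback property already contained in Lemma \ref{lem:maintechnicalbpf}, by realizing the intersection product as the pullback of an exterior product along the diagonal. First I would reduce to generators: the intersection pairing $N^{k}(X) \times N^{r}(X) \to N^{k+r}(X)$ is bilinear on finite-dimensional real vector spaces, hence continuous, and $\bpf^{k+r}(X)$ is a closed convex cone, so it suffices to prove $\alpha \cdot \beta \in \bpf^{k+r}(X)$ when $\alpha$ and $\beta$ are strongly basepoint free classes. Writing arbitrary elements of $\bpf^{k}(X)$ and $\bpf^{r}(X)$ as limits of nonnegative combinations of such classes and passing to the limit then yields the general case. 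Setting $n := \dim X$, we may assume $k+r \leq n$, as otherwise $N^{k+r}(X) = 0$.

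So let $p_{1}: U_{1} \to W_{1}$ (with flat map $s_{1}: U_{1} \to X$) and $p_{2}: U_{2} \to W_{2}$ (with flat map $s_{2}: U_{2} \to X$) be strongly basepoint free families representing $\alpha$ and $\beta$. I would next check that the exterior product family
\[
p_{1} \times p_{2}: U_{1} \times U_{2} \longrightarrow W_{1} \times W_{2}, \qquad s_{1} \times s_{2}: U_{1} \times U_{2} \longrightarrow X \times X
\]
is a strongly basepoint free family on the smooth projective variety $X \times X$. Indeed, $U_{1} \times U_{2}$ is quasi-projective and equidimensional; the map $s_{1} \times s_{2}$ is flat since it factors as a composition of base changes of the flat maps $s_{i}$; the map $p_{1} \times p_{2}$ is proper of relative dimension $(n-k)+(n-r)$; and each component $C_{1} \times C_{2}$ surjects onto $W_{1} \times W_{2}$ because each $C_{i}$ surjects onto $W_{i}$. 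Its general fiber is $F_{1} \times F_{2}$ for general fibers $F_{i}$ of $p_{i}$, and by compatibility of proper pushforward with exterior products,
\[
(s_{1} \times s_{2})_{*}[F_{1} \times F_{2}] = (s_{1}|_{F_{1}})_{*}[F_{1}] \times (s_{2}|_{F_{2}})_{*}[F_{2}] = \alpha \times \beta .
\]
Since $\dim(X \times X) = 2n$ and the relative dimension is $2n - (k+r)$, this exhibits the exterior product class $\alpha \times \beta$ as an element of $\bpf^{k+r}(X \times X)$.

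Finally, let $\delta: X \to X \times X$ be the diagonal. It is a closed immersion, hence a projective morphism into the smooth projective variety $X \times X$, and $X$ is smooth, so the pullback clause of Lemma \ref{lem:maintechnicalbpf} gives $\delta^{*}\bpf^{k+r}(X \times X) \subseteq \bpf^{k+r}(X)$. Because $\delta^{*}(\alpha \times \beta) = \alpha \cdot \beta$ is the standard expression of the intersection product in the numerical dual ring of the smooth variety $X$ (Remark \ref{rmk:dualring}), I conclude $\alpha \cdot \beta \in \bpf^{k+r}(X)$.

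The essential difficulty has in fact already been resolved in Lemma \ref{lem:maintechnicalbpf}, which established the contravariance of the basepoint free cone by producing an explicit intersection-theoretic representative of a pullback class; once that lemma is in hand the present deduction is formal. The only points demanding care here are the routine verification that products of flat, proper, and equidimensional morphisms inherit these properties, so that the exterior product family genuinely meets the definition of strong basepoint freeness, together with the identification of $\delta^{*}$ applied to the exterior product with the intersection product at the level of numerical dual classes. An alternative, more hands-on route would avoid the diagonal altogether and instead directly verify that the fiber product family $U_{1} \times_{X} U_{2} \to W_{1} \times W_{2}$ (with its induced flat map to $X$, flat as a composition) represents $\alpha \cdot \beta$ after restricting to an open dense locus of the base where the fibers attain the expected dimension $n-k-r$; but this requires repeating the multiplicity bookkeeping of Lemma \ref{lem:maintechnicalbpf}, so the diagonal argument is preferable.
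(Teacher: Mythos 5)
Your proof is correct, but it is structured differently from the paper's. The paper's own argument is essentially the ``alternative, more hands-on route'' you dismiss at the end: it forms the fiber product family $U \times_{X} U' \to W \times W'$, observes that the composed map $U \times_{X} U' \to X$ is flat, and cites Lemma \ref{lem:maintechnicalbpf} to conclude that its general fibers represent the intersection class --- no multiplicity bookkeeping is repeated there, since producing exactly that bookkeeping is what the lemma was designed for. Your primary route instead builds the exterior product family $p_{1}\times p_{2}$ on $X \times X$, checks it is strongly basepoint free there, and then invokes the pullback clause of Lemma \ref{lem:maintechnicalbpf} for the diagonal $\delta: X \to X \times X$ together with the identity $\delta^{*}(\alpha \times \beta) = \alpha \cdot \beta$ on the smooth variety $X$. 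The two arguments are geometrically the same thing in different packaging: by the lemma, the pullback of the exterior family along $\delta$ is represented by $(U_{1}\times U_{2}) \times_{X \times X} X \cong U_{1}\times_{X} U_{2}$, which is precisely the paper's family. What your version buys is modularity --- everything reduces to the already-proved pullback statement plus standard facts (products of flat, proper, equidimensional maps inherit these properties; proper pushforward commutes with exterior products; the diagonal description of the intersection product) --- at the cost of routing through $X \times X$ and of the compatibility check that the product in the numerical dual ring of Remark \ref{rmk:dualring} agrees with the Gysin pullback along $\delta$ of the exterior product, which does hold for smooth $X$ via the identification $N^{k}(X)\cong N_{n-k}(X)$ but is glossed over in your write-up. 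The paper's version is shorter and has the advantage of exhibiting an explicit family on $X$ itself representing the product class.
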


\begin{proof}
Suppose that $p: U \to W$ and $p': U' \to W'$ are strongly bpf families on $X$.
Consider the diagram
\begin{equation*}
\begin{CD}
U \times_{X} U' @>>> U \\
@VVV        @VVsV\\
U' @>>s'> X
\end{CD}
\end{equation*}
The composed map $U \times_{X} U' \to X$ is flat and the family $p \times p': U' \times_{X} U \to W \times W'$ represents the intersection class by Lemma \ref{lem:maintechnicalbpf}.
\end{proof}

\begin{lem}\label{lem:plbpfupsef}
Let $X$ be a smooth projective variety.  Then $\pl^{k}(X) \subseteq \bpf^{k}(X) \subseteq \upsef^{k}(X)$.
\end{lem}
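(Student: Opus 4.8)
The plan is to establish the two inclusions $\pl^k(X) \subseteq \bpf^k(X)$ and $\bpf^k(X) \subseteq \upsef^k(X)$ separately, exploiting the characterizations already built up in the paper. Since all three cones are closed convex cones, it suffices to check each containment on the generators of the smaller cone.

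For the first inclusion $\pl^k(X) \subseteq \bpf^k(X)$, recall that $\pl^k(X)$ is generated by products of Schur polynomial classes $s_\lambda(E)$ of globally generated vector bundles $E$. Since $\bpf^k(X)$ is closed under intersection product by Corollary \ref{cor:bpfstableint}, it suffices to show that a single Schur class $s_\lambda(E)$ of a globally generated bundle $E$ is strongly basepoint free. The key geometric input is the Gauss map: a globally generated bundle $E$ of rank $e$ with $N$ generating global sections induces a morphism $\gamma: X \to \mathbb{G}$ to a Grassmannian such that $E = \gamma^* Q$, where $Q$ is the tautological quotient bundle, and $s_\lambda(E) = \gamma^* s_\lambda(Q)$. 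On the Grassmannian the Schur class $s_\lambda(Q)$ is represented by an effective Schubert cycle, and — crucially — the homogeneity of $\mathbb{G}$ under its general linear group action means this Schubert cycle moves in a strongly basepoint free family (one can take the family of all translates of a fixed Schubert variety by the group action, which is flat over the group and dominates $\mathbb{G}$). I would then pull this family back along $\gamma$ using Lemma \ref{lem:maintechnicalbpf}, which provides exactly the mechanism for pulling a strongly bpf family on a smooth target $\mathbb{G}$ back to a strongly bpf family on $X$ (the final assertion of that lemma gives $\gamma^*\bpf^k(\mathbb{G}) \subset \bpf^k(X)$ when $X$ is smooth). Thus $s_\lambda(E) = \gamma^* s_\lambda(Q) \in \bpf^k(X)$.

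For the second inclusion $\bpf^k(X) \subseteq \upsef^k(X)$, I would take a strongly basepoint free class $\alpha \in N^k(X)$ represented by a family $p: U \to W$ with flat map $s: U \to X$, and verify universal pseudoeffectivity directly from Definition \ref{defn:upsef}. Given any morphism $f: Y \to X$ from a projective variety $Y$ (which by precomposition I may take to be smooth, or reduce to the generically-finite-onto-image case via Proposition \ref{prop:bircritforbpf}), I must show $f^*\alpha \in \Eff^k(Y)$. This is precisely the content of part (i) of Lemma \ref{lem:maintechnicalbpf}: since $X$ plays the role of the smooth target there (here one applies the lemma with the roles arranged so the bpf family lives on the smooth variety), the intersection cycle $Y \cap_f T$ is an honest \emph{effective} cycle whose pushforward to $Y$ represents $f^*(s|_T)_*[T] \cap [Y] = f^*\alpha \cap [Y]$. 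An effective cycle is pseudoeffective, so $f^*\alpha$ is pseudoeffective, which is exactly what universal pseudoeffectivity demands.

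The main obstacle is the first inclusion, specifically verifying that Schur/Schubert classes on the Grassmannian genuinely arise from a strongly basepoint free family in the precise technical sense of the definition — one needs a flat map $s$ to $\mathbb{G}$ and an equidimensional family $p$ with the general fiber pushing forward to $s_\lambda(Q)$. The homogeneity of $\mathbb{G}$ makes this plausible, but care is required to produce the family with the correct flatness and equidimensionality so that Lemma \ref{lem:maintechnicalbpf} applies cleanly; one should confirm that translating a Schubert variety by the transitive group action yields a flat total space over the group with general fibers in the expected position. By contrast the second inclusion is essentially a direct application of the already-proven Lemma \ref{lem:maintechnicalbpf}(i) together with the observation that effective implies pseudoeffective, so I expect it to be routine.
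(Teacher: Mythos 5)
Your proof is correct, and for the first inclusion it is essentially identical to the paper's: both reduce via Lemma \ref{lem:maintechnicalbpf} to showing that Schubert classes on a Grassmannian are strongly basepoint free, and both realize this by the family of $\mathbb{P}GL(V)$-translates $gZ$ of a Schubert variety, whose projection to $\mathbb{G}(V)$ is flat by equivariance under the transitive action (this is exactly how the paper dispatches the flatness worry you flag at the end); your explicit appeal to Corollary \ref{cor:bpfstableint} to handle products of Schur classes just makes precise a step the paper leaves implicit. For the second inclusion your route differs slightly, and in fact is a little more direct. The paper first reduces to pullbacks along morphisms from \emph{smooth} varieties (invoking Proposition \ref{prop:domupsef} and alterations) and then uses the smooth-to-smooth pullback statement $f^*\bpf^k(X)\subseteq\bpf^k(Y)$ together with $\bpf^k(Y)\cap[Y]\subseteq\Eff_*(Y)$. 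You instead apply part (i) of Lemma \ref{lem:maintechnicalbpf} with the bpf family on the smooth variety $X$ and an \emph{arbitrary} projective source $Y$; this is legitimate because that lemma only requires smoothness of the target carrying the family, not of the source, and it yields directly an effective cycle representing $f^*\alpha\cap[Y]$, hence $f^*\alpha\in\Eff^k(Y)$. Your argument therefore avoids the alteration/pushforward reduction entirely; the parenthetical reduction to smooth $Y$ in your write-up is unnecessary (and, as stated, ``by precomposition'' would itself need the projection-formula argument the paper uses), but since your main argument never uses it, nothing is lost.
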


\begin{proof}
To see the first inclusion, by Lemma \ref{lem:maintechnicalbpf} it suffices to show that $\Eff^{k}(\mathbb{G}) = \bpf^{k}(\mathbb{G})$ for a Grassmannian $\mathbb{G}$.  But we can construct flat families representing elements $\Eff^{k}(\mathbb{G})$ using the group action.  More precisely, suppose $Z$ is a Schubert variety on $\mathbb{G}(V)$.  Set $W=\mathbb{P}GL(V)$, and consider the family $U \subset W \times \mathbb{G}(V)$ whose fiber over $g \in W$ is $gZ$.  Then the projection $s: U \to \mathbb{G}(V)$ is flat since it is $\mathbb{P}GL(V)$-equivariant, showing that $[Z] \in \bpf(\mathbb{G}(V))$.

By Lemma \ref{prop:domupsef}, we may check containment in $\upsef$ after pulling back via a dominant map.  In particular, by passing to an alteration to verify the $\upsef$ property it suffices to consider pullbacks to smooth varieties.  The second inclusion then follows from Lemma \ref{lem:maintechnicalbpf}.
\end{proof}

\begin{cor}\label{cor:bpffullsal}
Let $X$ be a smooth projective variety.  Then $\bpf^{k}(X)$ is a full-dimensional salient cone.
\end{cor}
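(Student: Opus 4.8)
The plan is to deduce both properties formally from the chain of containments $\pl^{k}(X) \subseteq \bpf^{k}(X) \subseteq \Nef^{k}(X)$ already recorded in the excerpt, together with the full-dimensionality of the pliant cone and the salience of the nef cone. No new geometry is needed; the content lies entirely in the earlier lemmas.

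First I would establish full-dimensionality. By Lemma \ref{lem:plbpfupsef} we have $\pl^{k}(X) \subseteq \bpf^{k}(X)$, and by Lemma \ref{lem:pl generates} the pliant cone already spans $N^{k}(X)$ as a real vector space. Since a supercone of a full-dimensional cone is itself full-dimensional, $\bpf^{k}(X)$ spans $N^{k}(X)$, which is exactly the assertion that it is full-dimensional.

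Next I would establish salience. The discussion preceding Lemma \ref{lem:bpfflatpush} notes the containment $\bpf^{k}(X) \subseteq \Nef^{k}(X)$. Because $\Nef^{k}(X)$ is by definition the dual of the full-dimensional cone $\Eff_{k}(X)$, it is salient (the dual of a full-dimensional cone contains no line). As any subcone of a salient cone is salient, it follows that $\bpf^{k}(X)$ is salient.

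I do not expect any genuine obstacle here: the work has been front-loaded into assembling the inclusions of Lemma \ref{lem:plbpfupsef} and into the positivity statements for $\pl^{k}$ and $\Nef^{k}$. The only points requiring a moment of care are the two elementary transfer principles---that full-dimensionality passes to larger cones and salience passes to subcones---both of which are immediate from the definitions.
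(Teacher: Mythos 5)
Your proof is correct and matches the paper's intended argument: the corollary is stated immediately after Lemma \ref{lem:plbpfupsef} precisely so that full-dimensionality follows from $\pl^{k}(X) \subseteq \bpf^{k}(X)$ together with Lemma \ref{lem:pl generates}, and salience follows from $\bpf^{k}(X) \subseteq \Nef^{k}(X)$ together with the salience of the nef cone. The two transfer principles you invoke (full-dimensionality passes to supercones, salience to subcones) are exactly the ones the paper leaves implicit.
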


\begin{exmple}\label{ex:bpfhom}
The proof of Lemma \ref{lem:plbpfupsef} shows that $\bpf^k(H)=\upsef^k(H)=\Eff^k(H)\subset\Nef^k(X)$ for any smooth projective homogeneous space $H$. When $H$ is one of the examples of abelian varieties of \cite{delv11}, the last inclusion may be strict.
\end{exmple} 

\begin{exmple}\label{ex:projbundlebpf}Let $E$ be a vector bundle over $\mathbb P^1$, and let $X=\mathbb P(E)$. Then $\bpf^k(X)=\Nef^k(X)$ for all $k$.

We follow the notation of Example \ref{ex:projbundleupsef} and do induction on the number of semistable factors of $E$. When $E$ is semistable, then $X$ is isomorphic to a product.  The generators $(\xi-\mu_1f)^k$ and $\xi^{k-1}f$ of $\Nef^k(X)$ are both pliant and we conclude by Lemma \ref{lem:plbpfupsef}. 

For general $E$, the same argument as in the semistable case works as long as $k\leq r={\rm rank}\, Q_1$. It is enough to check that $\xi^k+\nu^{(k)}\xi^{k-1}f$ is strongly basepoint free for all $k>r$.
Let $Z:={\rm Bl}_{\mathbb P(Q_1)}\mathbb P(E)$ with blow-down map $\sigma:Z\to X$ and bundle map $\eta:Z\to\mathbb P(E_1)$. On $Z$ we have
$$\sigma^*(\xi^k+\nu^{(k)}\xi^{k-1}f)=\sigma^*(\xi-\mu_1f)^r\cdot\eta^*(\xi_1^{k-r}+\nu_1^{(k-r)}\xi_1^{k-r-1}f_1).$$
Since we work over $\mathbb P^1$, the varieties $X$ and $Z$ are toric. Since the class $\xi-\mu_1f$ is nef, it is also semiample and in particular strongly basepoint free. The class $\xi_1^{k-r}+\nu_1^{(k-r)}\xi_1^{k-r-1}f_1$ is strongly basepoint free by induction. From (the proofs of) Lemma \ref{lem:maintechnicalbpf} and Corollary \ref{cor:bpfstableint}, it follows that $\sigma^*(\xi^k+\nu^{(k)}\xi^{k-1}f)$ is strongly basepoint free. 

Again because we work over $\mathbb P^1$, the bundle $E$ is split. Then we also have an inclusion $\mathbb P(E_1)\subset\mathbb P(E)=X$ such that $\mathbb P(E_1)\cap\mathbb P(Q_1)=\emptyset$ in $X$ and $[\mathbb P(E_1)]=(\xi-\mu_1f)^r$.
Thus $Z={\rm Bl}_{\mathbb P(Q_1)}\mathbb P(E)$ contains the copy $\sigma^{-1}\mathbb P(E_1)$ of $\mathbb P(E_1)$ that does not meet the exceptional locus of $\sigma$, and with numerical class $\sigma^*(\xi-\mu_1f)^r$. %By applying Lemma \ref{lem:maintechnicalbpf} we find a class representing the strongly basepoint free class $$\sigma^*(\xi^k+\nu^{(k)}\xi^{k-1}f)=[\sigma^{-1}\mathbb P(E_1)]\cdot\eta^*(\xi_1^{k-r}+\nu_1^{(k-r)}\xi_1^{k-r-1}f_1)$$ 
%that does not meet the exceptional locus of $\sigma$.

Furthermore, $\sigma^{-1}\mathbb P(E_1)$ is a complete intersection of $r$ sections of $\xi-\mu_1f$ corresponding to a basis of the trivial component of $E\otimes\mathcal O_{\mathbb P^1}(-\mu_1)$.  Let $p$ denote the corresponding basepoint free family.  It follows that the general element of the basepoint free family constructed by intersecting the pullback of $p$ and the pullback basepoint free family from $\mathbb{P}(E_{1})$ does not meet the exceptional locus of $\sigma$. Up to shrinking the base, we see this as a family of cycles on $X$ representing the class $\xi^k+\nu^{(k)}\xi^{k-1}f$, which is then also strongly basepoint free.
\qed
\end{exmple}

The following example shows that the basepoint free cone of curves coincides with the nef cone for any smooth Mori Dream Space $X$.  The curves we construct come from small $\mathbb{Q}$-factorializations of $X$ which extract the Zariski decomposition of divisors on $X$.

\begin{exmple} \label{ex:bpfandmds}
Let $X$ be a smooth Mori Dream Space of dimension $n$ (for example, a toric variety).  We prove that $\bpf^{n-1}(X) = \Nef^{n-1}(X)$.

Recall that by \cite{bdpp04} the cone $\Nef^{n-1}(X)$ is generated by the positive products $\langle D^{n-1} \rangle$ as $D$ varies over all movable divisors (where $\langle - \rangle$ denotes the positive product).  We can turn this into a geometric construction as follows.  Fix an ample divisor $A$ on $X$.  Let $\alpha$ be a class on an extremal ray of $\Nef^{n-1}(X)$ and let $D$ be a divisor on the boundary of the movable cone of divisors such that the rays spanned by $\langle (D + \epsilon A)^{n-1} \rangle$ approach the ray spanned by $\alpha$.  Then the same is true if we replace $A$ by any big divisor $B$ using the continuity of the positive product.

There is a small birational contraction $\phi_{D}: X \dashrightarrow X'$ so that $D' := \phi_{D*}D$ is a semiample divisor; for simplicity, we rescale $D$ so that we may suppose $D'$ is basepoint free.  Let $W$ be a common smooth resolution of $X$ and $X'$ with birational maps $\psi: W \to X$ and $\psi': W \to X'$.  Fix an ample divisor $A'$ on $X'$ and let $B$ be the strict transform class on $X$.  Note that for sufficiently small $\delta > 0$, $X'$ is the minimal model for $D + \delta B$ and $D' + \delta A'$ is the pushforward of this class.  Then for sufficiently small $\delta$, we have
\begin{align*}
\langle (D + \delta B)^{n-1} \rangle & = \psi_{*} \langle \psi^{*}(D+\delta B)^{n-1} \rangle \\
& = \psi_{*} \langle \psi'^{*}(D' + \delta A')^{n-1}\rangle \\
& = \phi_{D*}^{-1} \langle (D' + \delta A')^{n-1} \rangle.
\end{align*}

Define a flat family of curves $p_{\delta}: \mathcal{C} \to W$ on $X'$ by taking complete intersections of $n-1$ general elements of a very ample linear series which is a multiple of $D' + \delta A'$ (for sufficiently small rational $\delta$).  Let $U \subset X'$ be the open subset on which $\phi_{D}$ is an isomorphism.  Note that the complement of $U$ has codimension-two.  Since $\mathcal{C}$ defines a flat family of curves, the preimage of $U$ has complement of codimension-two in $\mathcal{C}$.  Since $p_{\delta}$ has fiber dimension one, this set does not dominate the base $W$.  Thus by removing a proper closed subset from $W$ we obtain a family of curves $p_{\delta}^{0}: \mathcal{C}^{0} \to W^{0}$ whose map to $X'$ is flat and factors through $U$.

The strict transform of a general member of this family to $X$ defines a basepoint free curve class.  Since this strict transform avoids the exceptional locus of the map $\phi_{D}$, we see that the limit of the rays spanned by the fibers of $p_{\delta}^{0}$ as $\delta$ goes to $0$ is the same as $\alpha$, finishing the proof. 
\qed

\end{exmple}

Finally, we recall the example of \cite{bh15} which proves that universally pseudo-effective classes need not be basepoint free.

\begin{exmple} \label{upsefnotbpf}
Let $X$ be a smooth projective variety and suppose that some multiple of a class $\alpha \in N_{2}(X)$ is represented by an irreducible surface $S$.  Then the Hodge index theorem shows that the pairing
\begin{equation*}
N^{1}(X) \times N^{1}(X) \to \mathbb{R}; \qquad \qquad (D_{1}, D_{2}) \mapsto D_{1} \cdot D_{2} \cdot \alpha
\end{equation*}
must have at most one distinct positive eigenvalue.

Using a combinatorial argument \cite[Section 5]{bh15} constructs a smooth toric fourfold $X$ and a nef surface class $\alpha$ satisfying the following properties:
\begin{enumerate}
\item $\alpha$ generates an extremal ray of $\Nef^{2}(X)$.
\item The signature of the pairing on $N^{1}(X) \times N^{1}(X)$ given by intersection against $\alpha$ has three distinct positive eigenvalues.
\end{enumerate}
We show that $\alpha$ is not a limit of sums of classes which are nef and represent irreducible $\mathbb{R}$-cycles.  In particular $\alpha \not \in \bpf^{2}(X)$, but by Example \ref{ex:sphericalupsef} we have $\alpha \in \upsef^{2}(X)$.
We will need the following useful cone lemma:

\begin{lem}\label{irraprox}Let $C$ be a closed full-dimensional salient convex cone which is the closure of a cone generated by a set $\{c_i\}$ inside a finite dimensional vector space. Let $\alpha\in C$ span an extremal ray. Then there exists a subsequence $\{c_j\}$ of $\{c_i\}$ and positive real numbers $r_j$ such that $\alpha=\lim_{j\to\infty}r_jc_j$.\end{lem}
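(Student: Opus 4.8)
The plan is to pass to a compact ``base'' of $C$, reduce the statement to a partial converse of the Krein--Milman theorem (Milman's theorem), and then prove that partial converse by hand using Carath\'eodory's theorem together with sequential compactness. Everything takes place in the finite dimensional space $V$ containing $C$.

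First I would choose a linear functional $\ell$ on $V$ that is strictly positive on $C\setminus\{0\}$. Such an $\ell$ exists because $C$ is closed, salient, and full-dimensional: the dual cone is then full-dimensional, and any $\ell$ in its interior is strictly positive on $C\setminus\{0\}$. The slice $B:=\{v\in C:\ell(v)=1\}$ is a compact convex set. Closedness is clear; boundedness follows because a unit-norm limit of rescalings $v_n/|v_n|$ of an unbounded sequence $v_n\in B$ would be a nonzero element of the closed cone $C$ on which $\ell$ vanishes, contradicting the choice of $\ell$. After rescaling I may assume $\alpha\in B$. Since $\alpha$ spans an extremal ray of $C$, applying $\ell$ to a convex decomposition $\alpha=tx+(1-t)y$ with $x,y\in B$ and $0<t<1$ forces $x=y=\alpha$, so $\alpha$ is an \emph{extreme point} of $B$.

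Next I would normalize the generators. Discarding the zero vectors and setting $b_i:=c_i/\ell(c_i)\in B$ (legitimate since $\ell(c_i)>0$ for $c_i\neq 0$), I claim $B=\overline{\mathrm{conv}}\{b_i\}$. The inclusion $\supseteq$ is immediate from $b_i\in B$ and compactness of $B$; for $\subseteq$, any $x\in B\subseteq C=\overline{\mathrm{cone}}\{c_i\}$ is a limit of elements $y_n\in\mathrm{cone}\{c_i\}$, and since $\ell(y_n)\to\ell(x)=1$ the normalizations $y_n/\ell(y_n)\in\mathrm{conv}\{b_i\}$ converge to $x$. As $\alpha$ is extreme in $B$ and lies in the subset $\overline{\mathrm{conv}}\{b_i\}$, it is extreme in this closed convex hull as well.

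The crux is then to show that an extreme point $\alpha$ of $\overline{\mathrm{conv}}(S)$, where $S=\{b_i\}$ is contained in the compact set $B$, must lie in $\overline{S}$. I would prove this directly rather than cite Milman's theorem: by Carath\'eodory's theorem, $\alpha$ is a limit of points $x_m=\sum_{j=0}^{d}\lambda_j^{(m)}s_j^{(m)}$ with $s_j^{(m)}\in S$ and $(\lambda_j^{(m)})_j$ in the standard simplex, where $d=\dim V$. Using compactness of $B$ and of the simplex I pass to a subsequence so that $\lambda_j^{(m)}\to\lambda_j$ and $s_j^{(m)}\to s_j\in\overline{S}$ for each $j$, whence $\alpha=\sum_j\lambda_j s_j$ is a convex combination of points of $\overline{S}\subseteq B$. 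Extremality of $\alpha$ forces $s_j=\alpha$ for every $j$ with $\lambda_j>0$, and at least one such $j$ exists, so $\alpha\in\overline{S}=\overline{\{b_i\}}$. Finally, selecting $b_{i_j}\to\alpha$ from the family and setting $r_j=1/\ell(c_{i_j})$ gives $r_jc_{i_j}=b_{i_j}\to\alpha$; rescaling the $r_j$ by the normalizing constant recovers the original un-normalized $\alpha$. The main obstacle is precisely this Milman-type step; the passage to the base $B$ and the identification $B=\overline{\mathrm{conv}}\{b_i\}$ are elementary bookkeeping about salient cones.
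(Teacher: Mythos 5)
Your proof is correct, and in fact there is nothing in the paper to compare it against: Lemma \ref{irraprox} is stated inside Example \ref{upsefnotbpf} as a ``useful cone lemma'' and immediately applied, with no proof given, so your write-up supplies exactly the details the authors leave to the reader. Your route --- cut out the compact base $B=\{v\in C:\ell(v)=1\}$ using a functional $\ell$ strictly positive on $C\setminus\{0\}$ (existence uses precisely salience and closedness of $C$), show $B=\overline{\mathrm{conv}}\{b_i\}$ for the normalized generators $b_i=c_i/\ell(c_i)$, observe that a generator of an extremal ray becomes an extreme point of $B$, and then prove the Milman-type partial converse to Krein--Milman by Carath\'eodory's theorem plus compactness --- is the standard argument for this kind of statement, and every step checks out: the boundedness of $B$ via rescaling an unbounded sequence, the verification that conical combinations of the $c_i$ renormalize to convex combinations of the $b_i$, the extraction of convergent subsequences of the $d+1$ Carath\'eodory points and weights, and the final extreme-point argument forcing $s_j=\alpha$ whenever $\lambda_j>0$ are all airtight. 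The only point worth flagging is the degenerate case where $\alpha$ lies on the ray of some single $c_{i_0}$: then $\alpha\in\overline{\{b_i\}}$ is witnessed by a constant sequence rather than by infinitely many distinct terms, so ``subsequence'' must be read loosely --- but this looseness is already present in the statement of the lemma itself and causes no harm in its application.
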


Suppose that $\alpha$ is in the closure of the cone generated by nef irreducible cycles.  By (1), $\alpha$ spans an extremal ray of this cone.  Applying Lemma \ref{irraprox} to this cone (considered as a full-dimensional cone inside of the vector space it spans), we see that $\alpha$ is a limit of rescalings of irreducible nef cycles, a contradiction to (2).
\end{exmple}

\section{Questions}\label{s:questions}

\subsection{The pliant cone} \label{ss:pliantques}

We have defined the pliant cone in terms of monomials in Schur classes of globally generated bundles on $X$. The motivation for using Schur classes is that with this definition it is easy to see that $\Eff^k(\mathbb G)=\pl^k(\mathbb G)$ for any Grassmann variety $\mathbb G$. This is used in the proof of Lemma \ref{lem:ci intpliant}, where we say that since complete intersections are big, they are also in the interior of the pliant cone on $\mathbb G$.

\begin{ques}What happens if we change the definition of the pliant cone to include only monomials in Chern classes, or only monomials in dual Segre classes of vector bundles?
\end{ques}

\begin{exmple}Let $X=G(2,4)$ and let $Q$ be the universal quotient bundle of rank $2$ on $X$. Then $\Eff^2(X)$ is generated by the Schur classes $s_{(1,1)}(Q)=c_1^2(Q)-c_2(Q)$ and $s_{(2)}(Q)=c_2(Q)$ of $Q$. Note that $s_{(1,1)}(Q)=s_2(Q^{\vee})$, i.e. the second dual Segre class.

If we use only monomials in Chern classes of $Q$, then we get the smaller cone generated by $c_1^2(Q)$ and $c_2(Q)$. While if we use only monomials in dual Segre classes of $Q$, we obtain the ``complementary'' cone generated by $s_2(Q^{\vee})=c_1^2(Q)-c_2(Q)$ and $s_1^2(Q^{\vee})=c_1^2(Q)$.

However, if we also use $R$, the dual of the universal subbundle of rank $2$, so that we have an exact sequence
$0\to R^{\vee}\to\mathcal O_X^{\oplus 4}\to Q\to 0$ and $R$ is globally generated, then $c_1(R)=c_1(Q)$ and $c_2(R)=s_2(Q^{\vee})$. Therefore $\Eff^2(X)$ is generated by the Chern monomials $c_2(R)$ and $c_2(Q)$, or by the dual Segre monomials $s_2(Q^{\vee})$ and $s_2(R^{\vee})$. \qed
\end{exmple}

It is interesting to see if in higher codimension one can express the classes of Schubert cycles on Grassmannians as Chern monomials and as Segre monomials of globally generated 
bundles obtained by tensoring Schur functors $S_{\lambda}(R)\otimes S_{\mu}(Q)$.

Example \ref{ex:grassmann} describes the pliant cone for products of Grassmann varieties.  The next example to consider is homogeneous varieties.

\begin{ques}Let $f:X\to H$ be a morphism to a projective nonsingular homogeneous variety (e.g. partial flag variety, or an abelian variety) and let $\alpha\in\Eff^k(H)$.
Is it true that $f^*\alpha\in\pl^k(X)$? Equivalently, is it true that $\Eff^k(H)=\pl^k(H)$?
\end{ques}

A property of nefness is that it can be checked on dominant covers. It is not clear that the same is true for pliancy.

\begin{ques}Let $X$ be a (smooth) (complex) projective variety, and let $\pi:Y\to X$ be a dominant projective morphism. Assume that $\pi^*\alpha\in\pl^k(Y)$. Then does $\alpha\in\pl^k(X)$?
\end{ques}

By analogy with the other notions of positivity, we ask:

\begin{ques}Let $\pi: Y \to X$ be a flat morphism from a projective variety $Y$ to a smooth projective variety $X$ of relative dimension $d$.  Suppose that $\alpha \in \pl^{k+d}(Y)$.  Then is $\pi_{*}\alpha \in \pl^{k}(X)$?
\end{ques}

%In the definition of pliant classes we use global generation for its relation to geometry. It is natural to ask what happens for other positivity notions for vector bundles.

\subsection{Chern classes for ample vector bundles} \label{ss:Chernampleques}

Another way to modify the definition of the pliant cone is to allow arbitrary nef vector bundles, instead of just globally generated ones.  However, it is not clear if the resulting cone consists of effective classes.  The following question is also posed in \cite{fl83} and \cite[\S6]{delv11}. 

\begin{ques}\label{q:Chernupsef}Let $E$ be a nef (or ample) vector bundle on a projective variety, and let 
$\lambda$ be a partition. Is the Schur class $s_{\lambda}(E)\cap[X]$ pseudoeffective?  Is this true for Chern classes?
\end{ques}

Since nefness is preserved by pullback, this is the same as asking if $s_{\lambda}(E)$ is universally pseudoeffective. The answer is yes for dual Segre classes $s_k(E^{\vee}):=s_{(1^r)}(E)$, i.e. when $\lambda$ is the partition $(1,\ldots,1)$ of $k$. The answer is also known to be yes for the Chern classes $c_k(E)$ when $k\in\{1,\dim X-1,\dim X\}$. Quite generally it is a consequence of a result of Bloch--Gieseker (\cite[Theorem 8.2.1]{lazarsfeld04}) that $s_{\lambda}(E)\in\Nef^k(X)$ for any partition $\lambda$ of length $k$. The first unknown case is $c_2$ for nef bundles on fourfolds.
The issue here is that if say $E$ is ample, then ${\rm Sym}^m(E)$ (or in characteristic zero also $E^{\otimes m}$) is globally generated for large $m$, but $c_2(E)$ is not a scalar multiple of the pliant classes $c_2({\rm Sym}^m(E))$ or $c_2(E^{\otimes m})$.
It is also true that if $E$ is $p$-ample in characteristic $p>0$ (cf. \cite{gie71}), then $c_k(E)$ is pseudoeffective for all $k$, since $c_k(E)$ is proportional to $c_k(E^{p^e})$, and the iterated Frobenius pullback $E^{p^e}$ is globally generated for large $e$. Gieseker \cite{gie71} constructs an example of an ample bundle on $\mathbb P^2$ that is not $p$-ample.

\begin{ques}Let $E$ be a nef (i.e. $\mathcal O_{\mathbb P(E)}(1)$ is a nef line bundle) vector bundle on $X$. Is it true that $s_{\lambda}(E)$ is pliant?
\end{ques}

Another question related to Question \ref{q:Chernupsef} is:

\begin{ques}Let $X$ be a smooth projective variety and let $Y$ be a closed subvariety with nef (or ample) normal bundle. Is $[Y]$ universally pseudoeffective?
\end{ques}

If $f:Z\to X$ is a morphism of projective varieties, then $f^*[Y]=p_{1*}(\Gamma_f\cdot[Z\times Y])$, where $p_1:Z\times X\to Z$ is the first projection, and $\Gamma_f\subset Z\times X$ is the graph of $f$. Given that $N_{Z\times Y}Z\times X=(p_{1}|_Y)^*N_YX$ is still nef, the universal pseudoeffectivity of Chern classes of nef bundles would imply the pseudoeffectivity of $\Gamma_f\cdot[Z\times Y]$ in view of \cite[Proposition 6.1.(b)]{fulton84}, hence also that of $f^*[Y]$. It is known (\cite[Corollary 8.4.3]{lazarsfeld04}) that $[Y]$ is nef.

If Schur polynomials in Chern classes of nef bundles are not universally pseudoeffective, then it is interesting to ask what cone they generate.  In particular, it would be very interesting if they generate the entire nef cone.  The following question is a step in this direction:

\begin{ques}Let $E$ and $F$ be nef vector bundles on $X$, and let $\lambda$ and $\mu$ be partitions. Is $s_{\lambda}(E)\cdot s_{\mu}(F)$ nef?
\end{ques}

A positive answer, applied to the examples of \cite{delv11}, would show that $\Nef^k(X)$ is not the closure of the cone generated by classes $s_{\lambda}(E)$ with $E$ nef and $\lambda$ a partition of $k$. A negative answer would show that the answer to Question \ref{q:Chernupsef} is also no.

\subsection{The universally pseudoeffective cone}

By considering all maps of projective varieties $\pi:Y\to X$ in the definition of universal pseudoeffectivity, we guaranteed that this notion is preserved by pullback and in particular stable under products, thus removing some of the pathologies of nefness exhibited in \cite{delv11}. 

\begin{ques}Let $\alpha\in N^k(X)$ be such that $\imath^*\alpha\in\Eff^k(Y)$ for all embeddings of closed subvarieties $\imath:Y\hookrightarrow X$. Then is $\alpha\in\Upsef^k(X)$?
\end{ques}

A slightly weaker version of this also appears in \cite{delv11}:

\begin{ques}Let $X$ be a smooth (complex) projective variety. Let $\alpha\in N^k(X)$ be such that $\alpha\cdot[Y]$ is pseudoeffective for any closed subvariety $Y\subset X$.
Is it true that $\alpha\in\Upsef^k(X)$?
\end{ques}

This is weaker than the previous question because the pseudoeffectivity of $\alpha\cdot[Y]=\imath_*\imath^*\alpha$ is only implied by that of $\imath^*\alpha$. We expect that the answer to the next question is no, but a counterexample is missing:

\begin{ques}Let $X$ be a (smooth) (complex) projective variety. Is $\Upsef^k(X)=\Eff^k(X)\cap\Nef^k(X)$?
\end{ques}

June Huh asks whether a stronger statement is true:

\begin{ques}Let $X$ be a smooth complex projective variety. Is $\Eff^k(X)\cap\Nef^k(X)$  the closure of the cone generated by classes $\alpha$ such
that for each subscheme $T\subset X$ there exists a $\mathbb Q$-cycle $Z$ whose support meets $T$ properly and with $[Z]=\alpha$?
\end{ques}

The smallest dimension where a counterexample might exist is $n=4$ and $k=2$. It is also expected that a counterexample should exist in any birational equivalence class of sufficiently large dimension.

\subsection{Curves}

Curves provide an important test case for understanding the various positive cones.  Let $X$ be a smooth projective variety of dimension $n$ and define the cone $\mathrm{CI}^{n-1}(X) \subset N^{n-1}(X)$ to be the cone generated by complete intersections of nef divisors.  Note that $\mathrm{CI}^{n-1}(X)$ is a positive cone of dual classes: it is full-dimensional, salient, nef, and contains complete intersections of ample divisors in its interior.

We have $\Nef^{n-1}(X) = \Upsef^{n-1}(X)$ and
\begin{equation*}
\mathrm{CI}^{n-1}(X) \subset \pl^{n-1}(X) \subset \bpf^{n-1}(X) \subset \Nef^{n-1}(X).
\end{equation*}
The question is whether any other equalities hold.  Example \ref{ex:plandflop} shows that there can be a strict containment $\mathrm{CI}^{n-1}(X) \subsetneq \pl^{n-1}(X)$, and Example \ref{ex:bpfandmds} gives many examples where $\mathrm{CI}^{n-1}(X) \subsetneq \bpf^{n-1}(X)$.  However, one wonders if for example the $(n-1)$-dual Segre class of an ample vector bundle (corresponding to the partition $\lambda = (1^{n-1})$) is contained in $\mathrm{CI}^{n-1}(X)$.

Recall that by the main result of \cite{bdpp04}, the nef cone of curves is generated by pushforwards of complete intersections of ample divisors on birational models.  It is sometimes inconvenient that this cone does not coincide with $\mathrm{CI}^{n-1}(X)$, and it would be very interesting if the complete intersection cone could be recovered naturally from a different perspective.

\subsection{Other positive cones}
There are many other ways to construct positive cones.  We have already discussed several variations of the definition of the pliant cone: one can use a smaller set of classes (such as dual Segre classes; see Section \ref{ss:pliantques}) or a larger set of bundles (such as all ample bundles; see Section \ref{ss:Chernampleques}).  It would be very interesting to have a better understanding of the resulting cones.

One can also define many minor variations of the basepoint free cone.  For example, one can define $\bpf_{k}(X)$ by taking the cone generated by classes of $k$-dimensional components of arbitrary flat families of subschemes.  The resulting cone also contains the pliant cone, but it is not clear how it differs otherwise.

Finally, there are many other notions of positivity in the literature which may be suitable for constructing cones.  First, \cite{har70} defines an ample subvariety of a smooth variety $X$ to be an l.c.i.~subscheme with ample normal bundle.  Unfortunately, it is not clear that the classes of such subvarieties span $N^{k}(X)$; indeed, this is a very subtle question even just for l.c.i.~subvarieties.  An alternative is proposed by \cite{ott12}, which defines positivity by the $q$-ampleness of the exceptional divisor on a blow-up.  Ottem has communicated to us a sketch of the fact that the classes of such subvarieties span a full-dimensional cone in $N_{k}(X)$.

Alternatively, one can focus on the positivity of currents as discussed in \cite{bdpp04} and \cite{delv11}.  Unfortunately, to relate the resulting cones with cycles in higher codimensions it seems that one often must assume some version of the Hodge Conjecture.  Nevertheless, it would be useful to see some different approaches to positivity from this perspective.

\nocite{*}
\bibliographystyle{amsalpha}
\bibliography{poscones}

\end{document}